\documentclass[12pt,reqno]{amsart}
\textwidth=14.5cm  \oddsidemargin=0.5cm
\usepackage{graphicx}
\usepackage{float}
\usepackage{psfrag}
\usepackage{pxfonts}
\usepackage{mathrsfs}
\usepackage{color}
\usepackage{amsmath,amssymb}
\usepackage{hyperref}



\numberwithin{equation}{section}

\def \ZZ {{\mathbb Z}}












\theoremstyle{plain}
\newtheorem{maintheorem}{Theorem}



\newtheorem{theorem}{Theorem}[section]

\newtheorem{conjecture}[theorem]{Conjecture}
\newtheorem{cor}[theorem]{Corollary}
\newtheorem{proposition}[theorem]{Proposition}
\newtheorem{lemma}[theorem]{Lemma}
\newtheorem{definition}[theorem]{Definition}

\theoremstyle{remark}


\begin{document}

\author[F. Micena]{Fernando Micena}
\address{Instituto de Matem\'{a}tica e Computa\c{c}\~{a}o,
  IMC-UNIFEI, Itajub\'{a}-MG, Brazil.}
\email{fpmicena82@unifei.edu.br}

\author[R. Ures]{Raúl Ures}
\address{
SUSTech Department of Mathematics and Shenzhen International Center for Mathematics,
1088 Xueyuan Blvd, Nanshan, Shenzhen, Guangdong Province, China, 518055.}
\email{ures@sustech.edu.cn}

\thanks{FM and RU were partially supported by NNSFC 12071202.}
%
%

%
%
%


\title{ On the Ergodicity of Rotation Extensions of Hyperbolic Endomorphisms }
\maketitle

\begin{abstract}
We study the ergodicity of partially hyperbolic endomorphisms, focusing on skew products where the base dynamics are governed by Anosov endomorphisms. For this family, we establish ergodicity and prove that accessibility holds for an open and dense subset. By analyzing the topological implications of accessibility, we demonstrate that conservative accessible partially hyperbolic endomorphisms are topologically transitive. Leveraging accessibility, we further show ergodicity for skew products with
$\mathbb{S}^1$-fibers. Finally, although out the context of rotation extensions, we prove ergodic stability results for partially hyperbolic endomorphisms with $\dim(E^c) = 1.$
\vspace{.5cm}

Keywords: partially hyperbolic endomorphisms, ergodicity, accessibility, skew products, stable ergodicity.
\end{abstract}

\section{Introduction}

Let
$M$ be a compact connected boundaryless
$C^{\infty}$
  Riemannian manifold. A
$C^1-$endomorphism of
$M$ is defined as a
$C^1-$local diffeomorphism
$f: M \rightarrow M.$

For a compact connected manifold
$M$ and a local homeomorphism
$f: M \rightarrow M$, there exists a constant
$n > 0$ such that
$\#f^{-1}(\{x\}) = n$ for all
$x \in M.$  This constant
$n$, called the degree of
$f$ (denoted
$n = deg(f)$), is uniform across
$M$.

Endomorphism-induced dynamics have attracted significant interest, particularly in three key classes:
expanding maps, Anosov endomorphisms, and partially hyperbolic endomorphisms.

We investigate topological and ergodic properties of partially hyperbolic endomorphisms. While ergodicity for
$C^2-$conservative Anosov diffeomorphisms has been established since the 1990s \cite{An} (via the Hopf argument \cite{Hop}, originally developed by Eberhard Hopf for geodesic flows on compact negatively curved surfaces), the non-invertible setting introduces new challenges.

In 1995, Pugh and Shub [PS1] conjectured at the International Conference on Dynamical Systems (Montevideo) that relaxing hyperbolicity to partial hyperbolicity could establish stable ergodicity as a generic property. Specifically, for
$r \geq 2,$ they formulated:

\begin{conjecture}[Pugh-Shub conjecture] Stable ergodicity is
$C^r-$open and dense among volume-preserving partially hyperbolic diffeomorphisms on connected compact Riemannian manifolds.
  \end{conjecture}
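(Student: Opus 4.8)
The plan is to follow the two-pillar strategy that Pugh and Shub themselves proposed. First observe that openness in the conjecture is automatic: if $f$ is stably ergodic, a $C^r$-neighborhood of $f$ consists of ergodic maps, hence of stably ergodic maps, so the stably ergodic set is $C^r$-open. The content is therefore \emph{density}. Writing $\mathrm{PH}^r_\mu(M)$ for the $C^r$ volume-preserving partially hyperbolic diffeomorphisms, I would reduce density to two statements: \textbf{(A)} the stably accessible maps---those for which every $C^1$-nearby $g$ has the property that any two points are joined by a path piecewise tangent to $E^s$ or $E^u$---are $C^r$-dense in $\mathrm{PH}^r_\mu(M)$; and \textbf{(B)} every accessible $f\in\mathrm{PH}^2_\mu(M)$ is ergodic. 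Granting both, the stably accessible set is open (by definition) and dense (by (A)), and by (B) it consists of ergodic maps; since accessibility persists on the defining neighborhood, these maps are in fact stably ergodic, which is the density assertion.

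For pillar \textbf{(B)} I would run an absolutely continuous Hopf argument \cite{Hop}. For continuous $\varphi$ the forward and backward Birkhoff averages $\varphi^+,\varphi^-$ exist and agree almost everywhere; $\varphi^+$ is constant along unstable leaves and $\varphi^-$ along stable leaves, up to null sets. The difficulty is the center direction, along which nothing forces invariance. I would resolve this with the Pugh--Shub/Burns--Wilkinson machinery: construct local \emph{fake foliations} in exponential charts whose stable and unstable holonomies are absolutely continuous, and propagate the a.e.-constancy of $\bar\varphi:=\varphi^+=\varphi^-$ along $su$-paths. Essential accessibility then connects almost every point to almost every point, forcing $\bar\varphi$ to be a.e. constant, hence $f$ ergodic. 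The delicate analytic point---and the reason $C^2$ rather than $C^1$ is needed---is the absolute continuity of the holonomies, which requires bounded distortion of the intermediate foliations.

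For pillar \textbf{(A)} I would create accessibility by perturbation. The idea is to engineer a closed $su$-loop, an \emph{accessibility wheel}, whose return holonomy has nonzero center displacement after an arbitrarily small $C^r$-perturbation supported near a periodic orbit, and then spread this to full accessibility using transitivity of the $su$-relation together with the density of stable and unstable leaves. When $\dim E^c=1$ the displacement is a single scalar and one well-placed perturbation suffices; this is exactly the regime in which the argument closes completely and yields a $C^r$-open dense accessible set.

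The main obstacle is that the full conjecture fails to reduce cleanly precisely where the $\dim E^c=1$ theory does not extend, and this occurs in both pillars. In \textbf{(B)}, every known proof of accessibility $\Rightarrow$ ergodicity invokes a \emph{center-bunching} inequality to guarantee absolute continuity of the center holonomies; this hypothesis is not part of the conjecture, and removing it for strongly pinched center spectra is open. In \textbf{(A)}, for $\dim E^c\geq 2$ and $r\geq 2$ the wheel displacement is vector-valued, so one must realize full-rank displacement simultaneously in all center directions by a $C^r$-small perturbation, and no $C^r$ (as opposed to $C^1$) parametric transversality argument currently delivers this. I therefore expect the crux of any complete proof to be a bunching-free absolute continuity theorem for center holonomies together with a $C^r$ full-rank wheel-displacement theorem; with present techniques the plan reduces the conjecture to exactly these two hard analytic inputs.
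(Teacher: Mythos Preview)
The statement you are addressing is labeled a \emph{Conjecture} in the paper, and the paper does not prove it; it is quoted purely as historical and motivational background for the paper's own results on endomorphisms. There is therefore no ``paper's own proof'' to compare against. Your write-up is an accurate survey of the Pugh--Shub program and of the known partial results (the Burns--Wilkinson argument for pillar (B) under center bunching, and the \cite{HHU} result for both pillars when $\dim E^c=1$), and you correctly identify the two outstanding obstructions: removing center bunching from the ``accessibility $\Rightarrow$ ergodicity'' implication, and establishing $C^r$-density of stable accessibility when $\dim E^c\geq 2$. But this is a reduction and status report, not a proof; the conjecture remains open in full generality, and nothing in the paper claims otherwise.

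One minor correction to your openness remark: stable ergodicity is $C^r$-open essentially by definition (as you say), so the entire substance is density---but note that the conjecture asks for density of \emph{stable} ergodicity, not just ergodicity, so even granting (A) and (B) you must check that the maps you produce are stably ergodic, which does follow since stable accessibility plus (B) gives ergodicity on a whole neighborhood. You handle this correctly.
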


To extend the Hopf argument to partially hyperbolic systems, accessibility property ensuring connectivity via stable/unstable manifolds plays a central role. Formally, a partially hyperbolic diffeomorphism
$f: M \rightarrow M$ is accessible if for any
$x,y \in M,$, there exists a piecewise
$C^1-$path
$\lambda_1\cup\dots\cup \lambda_n$ satisfying:
\begin{itemize}
\item $\lambda_i([0,1])\subset W^\sigma (\lambda_i(0))$ for $\sigma\in \{s,u\}$,
\item $\lambda_1(0)=x$ and $\lambda_n(1)=y$.
\end{itemize}

In 2000, Pugh and Shub \cite{PS} proposed a program to resolve the ergodic stability conjecture, centered on two foundational conjectures:

\begin{conjecture}[Essential Accessibility Implies Ergodicity]
For
$C^2-$volume-preserving partially hyperbolic diffeomorphisms, essential accessibility implies ergodicity.
\end{conjecture}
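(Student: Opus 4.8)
\medskip
\noindent\emph{Proof proposal.} The natural strategy, following the Pugh--Shub program, is to run the Hopf argument in its measurable form. Fix a $C^2$ volume-preserving partially hyperbolic diffeomorphism $f$ that is essentially accessible, and let $\phi\in L^\infty(M)$ be $f$-invariant; the goal is to show that $\phi$ is constant volume-almost everywhere. Passing to a Borel representative and applying the Birkhoff ergodic theorem, the forward and backward time averages $\phi^{+}$ and $\phi^{-}$ exist a.e., agree a.e.\ with $\phi$, and — since points on a common strong stable (resp.\ unstable) leaf have asymptotic forward (resp.\ backward) orbits — $\phi^{+}$ is constant on each leaf of $W^{s}$ while $\phi^{-}$ is constant on each leaf of $W^{u}$. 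Thus, off a single null set, $\phi$ is simultaneously $W^{s}$-saturated and $W^{u}$-saturated.

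The crux is upgrading this to the statement that $\phi$ is essentially constant along $su$-accessibility classes, which has to be made precise through a density-point argument, and this is where the real difficulty lies. The strong foliations $W^{s},W^{u}$ are only absolutely continuous, so transporting the a.e.\ constancy of $\phi$ across an $su$-path of finite length need not preserve the relevant null set of bad points unless the shape of the Lebesgue density neighborhoods is compatible with the stable and unstable holonomies. The device of Pugh and Shub is to replace round balls by \emph{juliennes} — neighborhoods built by alternately spreading a plaque of $W^{cs}$ and a plaque of $W^{cu}$ at geometrically tuned scales — and then to prove a julienne density basis theorem (a Vitali-type statement) together with the fact that $W^{s}$- and $W^{u}$-holonomies carry juliennes to comparable juliennes. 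This comparability is exactly where a \emph{center bunching} condition enters: one needs $\|T_xf|_{E^{c}}\|\cdot\|(T_xf|_{E^{c}})^{-1}\|$ to be dominated by the strong contraction/expansion rates, so that the holonomies are regular enough for the density transport to go through.

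Granting the julienne machinery, the conclusion follows in outline. The set $\Phi$ of points whose accessibility class is not essentially $\phi$-constant is measurable and saturated by accessibility classes, because at every common julienne-density point one has $\phi^{+}=\phi^{-}=\phi$, and a finite $su$-path sends julienne-density points of a full-measure set to julienne-density points of its image. Essential accessibility then forces $\mathrm{Vol}(\Phi)\in\{0,\mathrm{Vol}(M)\}$, and the full-measure alternative is excluded by observing that the transported density points pin down a single essential value; hence $\phi$ is constant a.e.\ and $f$ is ergodic. Pushed further, the same scheme yields the Kolmogorov and Bernoulli properties.

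I expect the main obstacle to be precisely the bunching-free version of this density argument: controlling the fine geometry of strong stable and unstable holonomies between center-stable and center-unstable plaques when the center dynamics is not tame relative to the strong directions is exactly what keeps the statement open in full generality. Under center bunching — in particular under the pointwise or $1$-bunched hypotheses in the work of Burns and Wilkinson — the scheme goes through and the statement is a theorem; the unrestricted conjecture appears to require a genuinely new idea to handle non-bunched center behavior.
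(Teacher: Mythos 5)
The statement you were asked to prove is not a theorem of this paper: it is stated verbatim as an open conjecture of Pugh and Shub (from their 2000 julienne paper), quoted in the introduction to motivate the authors' program for endomorphisms. The paper contains no proof of it, and indeed no proof of it is known in the generality stated. Your proposal correctly recognizes this. What you have written is an accurate survey of the standard attack --- the Hopf argument giving $W^s$- and $W^u$-saturation of invariant functions, the julienne density basis of Pugh--Shub to transport Lebesgue density points across $su$-paths, and the center bunching hypothesis needed to make the stable and unstable holonomies between center-stable and center-unstable plaques quasi-conformal enough for that transport --- together with an honest admission that the bunching-free case remains open. That is the correct state of the art: the conjecture is a theorem under center bunching (Burns--Wilkinson, cited in the bibliography as \cite{BW}) and when $\dim E^c=1$ (Rodriguez Hertz--Rodriguez Hertz--Ures, \cite{HHU}, also quoted in the introduction), but open in general.

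So there is no gap to repair in the usual sense, because you have not claimed a proof; the only thing to flag is that your text should not be presented as a ``proof proposal'' for the conjecture itself, but rather as a proof sketch of the bunched case together with an identification of the obstruction in general. One small refinement worth making explicit: saturation of the exceptional set $\Phi$ by accessibility classes plus \emph{essential} accessibility gives $\mathrm{Vol}(\Phi)\in\{0,\mathrm{Vol}(M)\}$ only after you verify that $\Phi$ is measurable and that the accessibility-class saturation of a null set can be controlled --- this is itself nontrivial and is handled in Burns--Wilkinson via the julienne machinery rather than being a soft consequence of the Hopf argument. Your outline places that step correctly but it is the heart of the matter, not a formality.
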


\begin{conjecture}[Density of Stable Accessibility]
For
$r \geq 2,$ stable accessibility is
$C^r-$dense in the space of partially hyperbolic diffeomorphisms.
\end{conjecture}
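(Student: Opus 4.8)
The plan is to reduce density of stable accessibility to ruling out, robustly, a lamination by accessibility classes. Write $AC(x)$ for the accessibility class of $x$, the set of endpoints of $su$-paths issuing from $x$; these classes are disjoint and cover $M$. The first step is a structural dichotomy, in the spirit of the work of Dolgopyat--Wilkinson and Rodriguez Hertz--Rodriguez Hertz--Ures: either every accessibility class is open, or no class is open and the classes laminate $M$ by $C^1$-immersed submanifolds of positive codimension whose geometry is governed by the center bundle $E^c$. In the first alternative, disjointness together with connectedness of $M$ makes each (open) class also closed, forcing a single class, so that $f$ is accessible. It therefore suffices to show that, for a $C^r$-dense set of $f$, the laminated alternative fails --- and to arrange this so that it persists under small perturbations, which is the \emph{stable} in stable accessibility.

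The second step localizes the obstruction at a periodic point $p$. A closed $su$-loop based at $p$ induces a holonomy germ $h \colon W^c_{\mathrm{loc}}(p) \to W^c_{\mathrm{loc}}(p)$, and the laminated alternative is equivalent to the statement that the pseudogroup generated by all such loop-holonomies preserves a common proper foliation of the center (to first order it fixes the center leaf through $p$). The aim is thus to perturb $f$ so that the linear part $Dh(p) \in \mathrm{GL}(E^c_p)$ moves away from the identity in such a way that the generated pseudogroup admits no common invariant proper subspace; its orbits then sweep out an open subset of the center, so that $AC(p)$ acquires nonempty interior and the dichotomy yields accessibility.

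For the third step I would compose $f$ with a $C^r$-small diffeomorphism supported in a small ball about a point $q$ on the loop whose orbit avoids that ball for a long time, chosen to shear $E^c$ against the strong stable and unstable foliations. The induced change in $Dh(p)$ is read off from the center cocycle along the loop, and a generic shear makes $Dh(p)$ nontrivial. Since the condition ``no common invariant proper subspace'' is open in $f$, the resulting accessibility is stable, giving exactly $C^r$-density of stable accessibility. When $\dim E^c = 1$ the holonomy derivative is a single scalar, nontriviality means $Dh(p) \neq 1$, and both its creation and its robustness are immediate; this proves the conjecture outright in the one-dimensional center case.

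The main obstacle is the tension between $C^r$-smallness with $r \geq 2$ and an order-one effect on the holonomy when $\dim E^c > 1$. A bump supported in a ball of radius $\rho$ that changes $Dh(p)$ by a definite amount has $C^r$-norm of order $\rho^{-(r-1)}$, so one cannot force the holonomy by a single localized push while keeping the perturbation $C^r$-small. The effect must instead be accumulated over many returns of the loop, or extracted from nonconformality of the center cocycle that amplifies an arbitrarily small shear, and one must simultaneously certify that the holonomy pseudogroup fills the entire center rather than stabilizing some intermediate invariant foliation. Controlling this accumulation in $C^r$ for arbitrary center dimension is exactly where the argument does not yet close; the scheme above succeeds in full only for $\dim E^c = 1$, and, in the weaker $C^1$ topology, for all center dimensions.
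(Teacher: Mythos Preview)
The statement you are trying to prove is presented in the paper as an open \emph{conjecture}, not a theorem; the paper offers no proof of it and only records that Rodriguez Hertz--Rodriguez Hertz--Ures established the special case $\dim E^c=1$. There is therefore no ``paper's own proof'' to compare against.

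Your proposal is, in effect, a sketch of the known strategy for the one-dimensional center case (dichotomy between open accessibility classes and a lamination, then breaking the lamination by perturbing loop holonomy at a periodic point), and you yourself identify the genuine obstruction in the last paragraph: for $\dim E^c>1$ and $r\geq 2$, a localized perturbation that moves the holonomy derivative by a definite amount has $C^r$-norm blowing up like $\rho^{-(r-1)}$, and there is no mechanism in your outline to accumulate the effect while staying $C^r$-small, nor to rule out an intermediate invariant subfoliation of the center. So the proposal is not a proof of the conjecture; it is a correct summary of why the conjecture is hard, together with a correct indication that the argument closes only when $\dim E^c=1$ (and, in $C^1$, for all center dimensions via Dolgopyat--Wilkinson). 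That matches the current state of the problem as reflected in the paper's introduction.
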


In 2007, F. Rodriguez Hertz, J. Rodriguez Hertz, and R. Ures \cite{HHU} established foundational results for systems with
$\dim(E^c) = 1$:

\begin{itemize}
\item For
 $r \geq 2$ , stable accessibility is
$C^r-$dense among volume-preserving partially hyperbolic diffeomorphisms.

\item For
$C^2-$
  volume-preserving partially hyperbolic diffeomorphisms, accessibility implies ergodicity.

\end{itemize}

The noninvertibility of an endomorphism can complicate the study of ergodicity. We typically seek to convert a noninvertible problem into one involving a homeomorphism. For an endomorphism  $f: M \rightarrow M$ we define the orbit space
 $M^f$ as the set of sequences  $\bar{x} = (x_n)_{n \in \mathbb{Z}}$ satisfying $f(x_n) = x_{n+1}.$ We equip this space with the metric $\bar{d}$
$$
\bar{d}(\tilde{x},\tilde{y})=\displaystyle\sum_{i\in \mathbb{Z}}\frac{d(x_i,y_i)}{2^{|i|}},
$$
where $d$ is the Riemannian metric of $M$. The space $(M^f, \bar{d})$ is  compact. We define the shift  as $\bar{f}(\tilde{x}) = \tilde{y} = (y_n)_{n \in \mathbb{Z}},$ whith $y_{n} = x_{n+1}.$ Since $f$ is continuous,  $\bar{f}$ is continuous. We define the projection $p: M^f \rightarrow M$ by $p((x_n)_{n \in \mathbb{Z}}) = x_0,$ which is continuous.

A local diffeomorphism $f: M \rightarrow M$
 is partially hyperbolic if, along every orbit
$\bar{x} = (x_n)_{n \in \mathbb{Z}},$ the tangent bundle splits into uniformly contracting ($E^s$), expanding ($E^u$), and intermediately behaving ($E^c$) subbundles. If
$E^c$
  is trivial,
$f$ is an Anosov endomorphism.
More precisely:

\begin{definition}[(absolute) Partially Hyperbolic Endomorphism]

 A $C^1-$local diffeomorphism $f: M \rightarrow M$ is  \emph{partially hyperbolic} if there exist:
 \begin{itemize}
\item A Riemannian metric
$\langle\cdot,\cdot\rangle$,
\item Constants \mbox{$0<\nu<\gamma_1\leq\gamma_2<\mu$} with $\nu<1,$ $\mu>1$ and $C>1,$
\end{itemize}

such
that for each orbit $(x_n)_{n\in \mathbb{Z}}$ of $f,$  the tangent bundle admits a splitting:
$$T_{x_n}M=E_{x_n}^s\oplus E_{x_n}^c\oplus E_{x_n}^u$$ satisfying for all $i\in \ZZ$, $n\geq 0$, and vectors $v^\sigma\in E^\sigma_{x_i}\,\,(\sigma\in\{s,c,u\})$:

\begin{enumerate}

\item Invariance: $Df_{x_i}(E^{\ast}_{x_i})=E^{\ast}_{x_{i+1}},$  for $\sigma \in \{s,c,u\},$

 \item Uniform contraction: $||Df^n_{x_i}(v^s)||\leq C\nu^n||v^s||,$
 \item Intermediate behavior: $C^{-1}\gamma^n_1||v^c||\leq||Df^n_{x_i}(v^c)||\leq C\gamma^n_2||v^c||,$
 \item Uniform expansion: $C^{-1}\mu^n||v^u||\leq||Df^n_{x_i}(v^u)||.$

\end{enumerate}

\end{definition}

It is clear that  $E^s_{x_i}$ depends solely on $x_0,$  which means there is a unique $E^s$ for each point $x \in M.$  In contrast,  this independence does not  hold for the center bundle  $E^c$ and the unstable bundle $E^u.$

\begin{definition}[Special Partially Hyperbolic Endomorphism] A partially hyperbolic endomorphism $f: M \rightarrow M$  is:
\begin{itemize}
\item {\bf $u-$special} f for every $x \in M,$ there is a unique unstable subbundle $E^u_x,$ independent of the orbit $\bar{x} \in M^f,$ with
$x_0 = x.$
\item {\bf  $c-$special } if an analogous uniqueness holds for the center subbundle $E^c_x$.
\end{itemize}

We say that $f$ is {\bf special} if it is both $c$ and $u-$special.

\end{definition}

The complex structure of the unstable bundles of Anosov endomorphisms was observed independently by Przytycki in \cite{PRZ} and Ma\~n\'e-Pugh in \cite{MP75}.

In fact, we have:

\begin{theorem}[\cite{micsa}]\label{Teo C}
Let  $f:M\rightarrow M$ be a transitive partially hyperbolic endomorphism. For each $\sigma \in \{u,c\}$ the following dichotomy holds:
\begin{itemize}
\item Either $f$ is a $\sigma-$special partially hyperbolic endomorphism.
\item Or there exists a residual subset $\mathcal{R}\subset M$ such that for every $x \in \mathcal{R},$ $T_xM$ admits
infinitely many distinct $\sigma$-directions.
\end{itemize}

\end{theorem}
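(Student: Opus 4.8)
The plan is to encode the multiplicity of $\sigma$-directions at a point as a function on $M$ and study its semicontinuity and invariance. For $x\in M$ set $\mathcal{D}^\sigma(x)=\{E^\sigma_{\bar x}\ :\ \bar x\in M^f,\ p(\bar x)=x\}$, a compact subset of the Grassmannian $\mathrm{Gr}_{\dim E^\sigma}(T_xM)$ (compact because $p^{-1}(x)\subset M^f$ is compact and $\bar x\mapsto E^\sigma_{\bar x}$ is continuous), and let $N^\sigma(x)=\#\mathcal{D}^\sigma(x)\in\{1,2,\dots\}\cup\{\infty\}$. First I would show that $x\mapsto\mathcal{D}^\sigma(x)$ is continuous for the Hausdorff metric: upper semicontinuity is immediate from compactness of $M^f$ and continuity of $\bar x\mapsto E^\sigma_{\bar x}$, while lower semicontinuity follows because $f$ is a local diffeomorphism, so any backward branch of an orbit through $x$ is shadowed by a backward branch through every nearby $x'$ (the branch depends only Hölder-continuously on $x'$, but the exponential weights in $\bar d$ absorb this). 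In particular each $\{N^\sigma\ge k\}$ is open. Next, since $Df_x$ is a linear isomorphism with $Df_x(E^\sigma_{\bar x})=E^\sigma_{\bar f(\bar x)}$, one gets $Df_x(\mathcal{D}^\sigma(x))\subset\mathcal{D}^\sigma(f(x))$, hence $N^\sigma\circ f\ge N^\sigma$, so each $\{N^\sigma\ge k\}$ is forward $f$-invariant. For a transitive $f$ a nonempty open forward-invariant set meets every nonempty open set and, being forward-invariant, is dense; thus $\{N^\sigma\ge k\}$ is either empty or open and dense.

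With this in hand the dichotomy is almost formal. If $\sup_xN^\sigma(x)=1$ then $f$ is $\sigma$-special. If $\sup_xN^\sigma(x)=\infty$ then every $\{N^\sigma\ge k\}$ is open and dense, so $\mathcal{R}:=\{N^\sigma=\infty\}=\bigcap_k\{N^\sigma\ge k\}$ is residual, the second alternative. Hence the real content is to exclude $2\le\sup_xN^\sigma(x)=m<\infty$. Assume this; then $U:=\{N^\sigma=m\}$ is open, dense and forward-invariant. The key dynamical input is domination: any two $L_1,L_2\in\mathcal{D}^\sigma(x)$ are mutual graphs over a \emph{history-independent} reference bundle, $L_2=\mathrm{graph}(\psi)$ with $\psi\colon L_1\to F_x$, where $F_x=E^{cs}_x:=E^s_x\oplus E^c_x$ when $\sigma=u$ and $F_x=E^s_x\subset E^{cs}_x$ when $\sigma=c$ (history-independence of $E^{cs}$ is checked by characterizing it via forward growth rate $\le\gamma_2$). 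Under $\widehat f$ (the action on the Grassmannian bundle) the slope transforms as $\psi_n=(Df^n|_F)\circ\psi\circ(Df^n|_{L_1})^{-1}$, and the gap conditions $\gamma_2<\mu$ (for $\sigma=u$) and $\nu<\gamma_1$ (for $\sigma=c$) give $\|\psi_n\|\le C\rho^n\|\psi\|$ with a uniform $\rho<1$; since $\mathcal{D}^\sigma$ consists of planes uniformly transverse to $F$ (compactness), this yields a uniform fibrewise contraction $d_{\mathrm{Gr}}(\widehat f^{\,n}L_1,\widehat f^{\,n}L_2)\le C\rho^n$ for $L_1,L_2$ in one fibre.

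Finally I would take a point $p$ with dense forward orbit (transitivity), choose $n_0$ with $f^{n_0}(p)\in U$, so $p_n:=f^n(p)\in U$ for all $n\ge n_0$ by forward invariance, whence $N^\sigma(p_n)=m$. Since $\mathcal{D}^\sigma(p_{n+1})=\bigcup_{z\in f^{-1}(p_{n+1})}Df_z(\mathcal{D}^\sigma(z))$ already contains the $m$-element set $Df_{p_n}(\mathcal{D}^\sigma(p_n))$, no new directions can appear along this orbit: $\mathcal{D}^\sigma(p_n)=\widehat f^{\,n-n_0}(\mathcal{D}^\sigma(p_{n_0}))$, so $\diam\mathcal{D}^\sigma(p_n)\to 0$ by the contraction. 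Picking $p_{n_k}\to y$ for arbitrary $y\in U$ (with $n_k\to\infty$) and using continuity of $x\mapsto\mathcal{D}^\sigma(x)$ forces $\diam\mathcal{D}^\sigma(y)=0$, i.e.\ $N^\sigma(y)=1$, contradicting $m\ge2$. I expect the main obstacle to be exactly this last part — turning the existence of two directions into infinitely many — which requires combining the fibrewise contraction from domination, the forward invariance of $\{N^\sigma\ge k\}$, and transitivity (to make the contracting orbit dense). A secondary technical point deserving care is the lower semicontinuity of $\mathcal{D}^\sigma$, i.e.\ the continuous dependence of backward orbit branches, together with the continuity of $\bar x\mapsto E^\sigma_{\bar x}$ on $M^f$.
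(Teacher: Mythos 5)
The paper does not prove this statement; it is quoted from \cite{micsa} (and, for $\sigma=u$ on Anosov endomorphisms, from \cite{MT}), so the comparison below is with the argument known from those sources. Your skeleton --- continuity of $x\mapsto\mathcal{D}^\sigma(x)$ in the Hausdorff metric, openness and forward invariance of $\{N^\sigma\ge k\}$, density of nonempty such sets by transitivity, and the Grassmannian contraction coming from the domination gaps $\gamma_2<\mu$ (resp.\ $\nu<\gamma_1$) applied to graphs over the history-independent bundles $E^{cs}$, $E^s$ --- is the same toolkit those proofs use, and all of it is sound. Where you genuinely differ is in the key step: the original argument manufactures more and more $\sigma$-directions at a point by splicing backward branches through the non-special region at increasing depths (each splice contributes a direction $C\rho^n$-close to, but distinct from, an existing one), whereas you instead rule out $2\le\sup_xN^\sigma=m<\infty$ by noting that the maximum is attained on an open, dense, forward-invariant set, that along a dense forward orbit inside it no new directions can appear (cardinality forces $\mathcal{D}^\sigma(p_{n+1})=Df_{p_n}\mathcal{D}^\sigma(p_n)$), and that the contraction then collapses $\mathcal{D}^\sigma$ everywhere by density. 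This is a clean and arguably more economical route to the same dichotomy; it buys you a purely soft contradiction in place of an explicit construction, at the price of leaning harder on the exact equality $N^\sigma\equiv m$ and on full (not just upper) semicontinuity of $\mathcal{D}^\sigma$.

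One justification needs repair. For lower semicontinuity you say the backward branch through a nearby point depends H\"older-continuously on the point and that ``the exponential weights in $\bar d$ absorb this.'' They need not: the $k$-th inverse branch is a composition of $k$ local inverses of $f$, whose Lipschitz constant can grow like $L^k$ with $L>2$ (e.g.\ along $E^s$ the inverse branches expand at rate $\nu^{-1}$), so the weighted series $\sum_kL^kd(x,x')/2^{|k|}$ may diverge. The correct (and easy) fix is that convergence in $(M^f,\bar d)$ is coordinatewise: truncate the series at a level $K$ making the tail small, and use continuity of the finitely many inverse branches $h_1,\dots,h_K$ at $x$. With that repair, together with the (standard, but worth citing) continuity of $\bar x\mapsto E^\sigma_{\bar x}$ for the dominated splitting over the compact system $(M^f,\bar f)$ and the history-independence of $E^{cs}$ and $E^s$ that the paper itself quotes from \cite{micsa}, your argument is complete.
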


The case  $\sigma = u$ of Theorem \ref{Teo C} was first established for  Anosov endomorphisms by Micena and Tahzibi \cite{MT}.

While the unstable "bundle" $E^u$ may exhibit complex structure, lifting to the universal cover
$\widetilde M$ simplifies the geometry at the cost of losing compactness.

\begin{proposition}[\cite{micsa}]\label{pro 6}
Let $f:M\rightarrow M$ be a $C^1$ endomorphism. Then, $f$ is a partially hyperbolic endomorphism if and only if every lift $\widetilde{f}:\widetilde M\to \widetilde M$  to the universal cover is a partially hyperbolic diffeomorphism.
\end{proposition}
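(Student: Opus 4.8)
The plan is to use the covering projection $\pi\colon\widetilde M\to M$ as a local isometry for a lifted metric, so that norms, and hence all the hyperbolicity rates, pass verbatim between the two pictures; the content then reduces to transporting the invariant splitting in each direction. I would first record the soft facts. A $C^1$ local diffeomorphism of a compact manifold is a covering map, so any lift $\widetilde f$ of $f$ through $\pi$ exists by the lifting criterion and is a covering map onto the simply connected $\widetilde M$, hence a $C^1$ diffeomorphism — this is what makes it legitimate to speak of a ``diffeomorphism'' at all. Fix a metric on $M$ and let $\widetilde g$ be its $\pi$-pullback; then $\pi$ is a local isometry, $\widetilde g$ is invariant under the deck group $\Gamma$, and each $D\pi_{\widetilde x}$ is a linear isometry. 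From $\pi\circ\widetilde f=f\circ\pi$ one gets $\pi\circ\widetilde f^{\,n}=f^{n}\circ\pi$ and $D\widetilde f^{\,n}_{\widetilde x}=(D\pi_{\widetilde f^{\,n}\widetilde x})^{-1}\circ Df^{n}_{x_0}\circ D\pi_{\widetilde x}$, so $\|D\widetilde f^{\,n}_{\widetilde x}v\|_{\widetilde g}=\|Df^{n}_{x_0}(D\pi_{\widetilde x}v)\|$ and $\|v\|_{\widetilde g}=\|D\pi_{\widetilde x}v\|$. Finally, the dynamical dictionary: every $\widetilde f$-orbit projects to an orbit $P(\widetilde x):=(\pi\widetilde f^{\,n}\widetilde x)_{n\in\ZZ}\in M^f$, and while a full orbit $\bar x\in M^f$ need not be of this form, forward orbits always lift and so does every finite backward segment — given $x_{-N},\dots,x_0$ and any $\widetilde z\in\pi^{-1}(x_{-N})$, the point $\widetilde x_0:=\widetilde f^{\,N}\widetilde z$ satisfies $\pi(\widetilde f^{-k}\widetilde x_0)=x_{-k}$ for $0\le k\le N$.

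For ``$\Rightarrow$'', assume $f$ is partially hyperbolic with the fixed metric and constants $\nu<\gamma_1\le\gamma_2<\mu$, $C>1$. For each $\widetilde x$ set $\bar x=P(\widetilde x)$ and $\widetilde E^{\sigma}_{\widetilde x}:=(D\pi_{\widetilde x})^{-1}(E^{\sigma}_{x_0})$, $\sigma\in\{s,c,u\}$, using the splitting of $f$ along $\bar x$. This defines a splitting of $T\widetilde M$ that is $D\widetilde f$-invariant (because the splitting of $f$ is invariant and $\pi\widetilde f=f\pi$), continuous (the splitting of $f$ over $M^f$ is continuous, being dominated, and $P$ and $D\pi$ are continuous), and satisfies (2)--(4) with the same constants by the norm identities above. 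Hence $\widetilde f$ is a partially hyperbolic diffeomorphism, and since the lift was arbitrary, all lifts are.

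For ``$\Leftarrow$'', assume every lift is partially hyperbolic and fix one, with invariant splitting $T\widetilde M=\widetilde E^{s}\oplus\widetilde E^{c}\oplus\widetilde E^{u}$ and its continuous invariant cone fields. Since $\widetilde E^{s}$ and $\widetilde E^{cs}:=\widetilde E^{s}\oplus\widetilde E^{c}$ are the unique $D\widetilde f$-invariant sub-bundles with their respective domination properties, they are $\Gamma$-equivariant and descend to continuous sub-bundles $E^{s}\subset E^{cs}$ of $TM$ (recovering the fact, noted after the definition, that $E^{s}$ is well defined on $M$). To obtain the orbit-dependent bundles, fix $\bar x\in M^f$; for each $N$ lift $x_{-N},\dots,x_0$ as above and push the center–unstable cone forward, getting $D\pi\big(\widetilde E^{cu}_{\widetilde x^{(N)}_0}\big)\subset Df^{N}_{x_{-N}}\big(\cC^{cu}_{x_{-N}}\big)$, where $\cC^{cu}$ is the ($\Gamma$-equivariant, hence well-defined and forward $f$-invariant) center–unstable cone field on $M$. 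The sets $Df^{N}_{x_{-N}}(\cC^{cu}_{x_{-N}})$ nest and, by the uniform domination inherited from $\widetilde f$, shrink exponentially to a subspace $E^{cu}_{x_0}$ depending only on $(x_{-n})_{n\ge0}$; the same argument with the narrower unstable cone gives $E^{u}_{x_0}$, and one sets $E^{c}_{x_0}:=E^{cs}_{x_0}\cap E^{cu}_{x_0}$, a transverse intersection of the expected dimension. Because forward iterates along $\bar x$ are already recorded by the finite lifts, invariance along $\bar x$ together with inequalities (2)--(4) transfer from $\widetilde f$ through the isometries $D\pi$, so $f$ is a partially hyperbolic endomorphism.

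The main obstacle is the direction ``$\Leftarrow$'', and it splits into two linked difficulties. First, a generic orbit $\bar x\in M^f$ does \emph{not} lift to a single $\widetilde f$-orbit (only finite pieces do), which is precisely the reflection on $\widetilde M$ of the orbit-dependence of the center and unstable bundles on $M$ (cf.\ Theorem~\ref{Teo C} and the Przytycki--Ma\~n\'e--Pugh phenomenon); hence $E^{u},E^{cu},E^{c}$ along $\bar x$ cannot be read off by a single pullback and must be assembled by the limiting cone-intersection argument above, whose convergence is exactly what forces the use of the uniform (``absolute'') estimates. Second, one must verify that the a priori metric making $\widetilde f$ partially hyperbolic on the noncompact $\widetilde M$ can be traded for a $\Gamma$-invariant one — equivalently, that the invariant splitting and cone fields descend to $M$ — which rests on the uniqueness of the dominated splitting (making it deck-equivariant and insensitive to the metric) together with the bounded geometry of $\widetilde M$ as a cover of compact $M$; this is the point requiring the most care to state precisely.
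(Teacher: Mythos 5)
The paper does not actually prove this proposition---it is quoted from \cite{micsa}---so there is no in-text argument to compare against; judged on its own, your proof is essentially the standard (and surely the intended) one: pull back the metric so that $\pi$ is a local isometry, push the orbitwise splitting of $f$ up along projected orbits for ``$\Rightarrow$'', and for ``$\Leftarrow$'' descend $E^{s},E^{cs}$ directly while reconstructing the orbit-dependent $E^{u},E^{cu}$ from finite backward lifts and a nested-cone limit. The one step that would fail as literally written is the claim that the center--unstable cone field on $\widetilde M$ is $\Gamma$-equivariant ``by uniqueness of the dominated splitting.'' Equivariance via growth-rate characterizations works for $E^{s}$ and $E^{cs}$ because $\widetilde f^{\,n}\circ\gamma=\gamma^{(n)}\circ\widetilde f^{\,n}$ for $n\ge 0$; but for $n<0$ the backward orbits of $\widetilde x$ and $\gamma\widetilde x$ project to \emph{different} elements of $M^{f}$ in general (this is exactly the Przytycki--Ma\~n\'e--Pugh phenomenon you invoke later), so $D\gamma\bigl(\widetilde E^{cu}_{\widetilde x}\bigr)\neq\widetilde E^{cu}_{\gamma\widetilde x}$ in general and a cone anchored to $\widetilde E^{cu}$ need not descend. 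The repair is routine: define $\mathcal{C}^{cu}$ on $M$ as a uniform cone complementary to the well-defined bundle $E^{s}$ (all the possible $E^{cu}$'s over a point are uniformly transverse to $E^{s}$, by the uniform domination and the bounded geometry of the pulled-back metric), and this cone field is $\Gamma$-equivariant and forward $Df$-invariant, after which your nesting argument goes through. The remaining caveat---that partial hyperbolicity of $\widetilde f$ must be taken with respect to the lifted metric, since metrics on the noncompact $\widetilde M$ need not be uniformly equivalent---you have already identified; that is indeed a convention implicit in the statement rather than something to be proved.
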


\begin{cor}
The set of all partially hyperbolic endomorphisms is $C^1-$open in the space of $C^1-$endomorphisms on $M$.
\end{cor}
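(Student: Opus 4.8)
The plan is to deduce the corollary from Proposition~\ref{pro 6} together with the classical $C^1$-openness of Anosov (partially hyperbolic) diffeomorphisms, being careful about the fact that lifting to $\widetilde M$ destroys compactness. First I would fix a partially hyperbolic endomorphism $f_0:M\to M$ and a lift $\widetilde{f_0}:\widetilde M\to\widetilde M$. Since $f_0$ and all its iterates commute with the deck transformation group $\Gamma$ of $\pi:\widetilde M\to M$, the splitting $T\widetilde M=E^s\oplus E^c\oplus E^u$ for $\widetilde{f_0}$ is $\Gamma$-equivariant, and the partial hyperbolicity constants $\nu<\gamma_1\le\gamma_2<\mu$, $C$ can be read off on a compact fundamental domain and hence are genuinely uniform on $\widetilde M$. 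The point is that the cone-field criterion for partial hyperbolicity of $\widetilde{f_0}$ only needs to be verified on a single fundamental domain $D$ (together with finitely many translates meeting $\overline D$), and there it is an open condition in the $C^1$-topology.

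Next I would make the perturbation argument precise. Let $g:M\to M$ be $C^1$-close to $f_0$; choose the lift $\widetilde g$ of $g$ that is $C^1$-close to $\widetilde{f_0}$ on a fixed fundamental domain — this determines $\widetilde g$ uniquely on all of $\widetilde M$ by equivariance, and because $D\widetilde g=D g\circ\pi$ pulls back the uniform $C^1$-distance on the compact manifold $M$, $\widetilde g$ is automatically $C^1$-close to $\widetilde{f_0}$ uniformly on $\widetilde M$, not merely on $D$. Now invoke the standard fact that the set of partially hyperbolic diffeomorphisms of a (possibly noncompact) manifold with uniform constants is $C^1$-open, provided the closeness is uniform: one checks that the invariant cone fields of $\widetilde{f_0}$ are still strictly invariant under $\widetilde g$ and that the rates $\nu,\mu$ (slightly worsened) still hold, all of which follows from uniform $C^1$-proximity. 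Hence $\widetilde g$ is a partially hyperbolic diffeomorphism.

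Finally, since $\widetilde g$ is the (a) lift of $g$ to the universal cover and it is partially hyperbolic, the "if" direction of Proposition~\ref{pro 6} — more precisely the statement that $g$ is a partially hyperbolic endomorphism as soon as \emph{some} lift is a partially hyperbolic diffeomorphism — gives that $g$ is a partially hyperbolic endomorphism. Thus a whole $C^1$-neighborhood of $f_0$ consists of partially hyperbolic endomorphisms, which is the claim. The main obstacle to watch for is the noncompactness of $\widetilde M$: one must make sure the $C^1$-closeness transferred to the cover is uniform (which it is, by $\Gamma$-equivariance and compactness of $M$) and that the openness criterion one quotes for diffeomorphisms is the uniform-cone-field version valid on noncompact manifolds rather than a version that secretly uses compactness of the phase space. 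Once uniformity is in hand, the argument is a direct combination of Proposition~\ref{pro 6} and the elementary cone-field stability of partial hyperbolicity.
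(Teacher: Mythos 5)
Your argument follows the route the paper intends: the corollary is presented as an immediate consequence of Proposition~\ref{pro 6}, obtained by combining it with the standard cone-field openness of partial hyperbolicity for diffeomorphisms, and your handling of the noncompactness of $\widetilde M$ (all estimates are pulled back from the compact $M$, since $D\widetilde g$ is determined by $Dg$ over a fundamental domain) is exactly the uniformity one needs. One small inaccuracy worth noting: the splitting $E^s\oplus E^c\oplus E^u$ of $\widetilde{f_0}$ is not literally $\Gamma$-equivariant --- only $E^s$ is, because $E^c$ and $E^u$ at $\tilde x$ depend on the backward $\widetilde{f_0}$-orbit of $\tilde x$, and distinct lifts of the same point of $M$ have backward orbits projecting to distinct preorbits --- but this does not damage the proof, since the invariant cone fields (and hence the hyperbolicity constants) can nevertheless be chosen $\Gamma$-equivariantly, as they need only contain the uniformly transverse family of all possible $\sigma$-directions over each point of $M$.
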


Our first result is the following:

\begin{maintheorem}[Integrability of the Stable-Unstalbe Splitting]\label{mainA}
Let $f: M \rightarrow M$ be an Anosov endomorphism and $\varphi: M \rightarrow \mathbb{S}^1$ a $C^1-$map. Define the rotation extension $$F_{\varphi}:M \times \mathbb{S}^1 \rightarrow  M \times \mathbb{S}^1, F_{\varphi}(x, \theta) = (f(x), \theta + \varphi(x)),$$  and assume:

\begin{enumerate}

\item The stable foliation $\mathcal{W}^s_f$ is minimal.
\item The extended stable foliation $\mathcal{W}^s_{F_{\varphi}}$ is not minimal.

\end{enumerate}
Then, the splitting $E^s\oplus E^u $ is integrable.
\end{maintheorem}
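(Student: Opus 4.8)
The plan is to exploit the structure of the rotation extension $F_\varphi$, whose center bundle $E^c$ is tangent to the $\mathbb{S}^1$-fibers, hence $F_\varphi$ is automatically special and its stable bundle $E^s_{F_\varphi}$ is simply $E^s_f$ (viewed in the product), while $E^u_{F_\varphi}$ projects onto $E^u_f$. First I would observe that the stable foliation $\mathcal{W}^s_{F_\varphi}$ of the extension consists of lifts of stable leaves of $f$: each leaf $\mathcal{W}^s_{F_\varphi}(x,\theta)$ projects diffeomorphically onto $\mathcal{W}^s_f(x)$, with the $\mathbb{S}^1$-coordinate varying along it by a ``holonomy'' cocycle built from $\varphi$. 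Concretely, along a stable leaf the $\theta$-displacement between $(x,\theta)$ and $(y,\theta')$ with $y \in \mathcal{W}^s_f(x)$ is given by the telescoping sum $\sum_{n\ge 0}\bigl(\varphi(f^n x)-\varphi(f^n y)\bigr)$, which converges because $d(f^n x, f^n y)\to 0$ exponentially. This defines a continuous function $u_s: \{(x,y): y\in \mathcal{W}^s_f(x)\}\to \mathbb{S}^1$, a stable holonomy cocycle.

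Next I would translate the hypotheses. Minimality of $\mathcal{W}^s_f$ means every stable leaf of $f$ is dense in $M$. Non-minimality of $\mathcal{W}^s_{F_\varphi}$ means there is a leaf $L$ whose closure $\overline{L}$ is a proper closed $F_\varphi$-invariant, $\mathcal{W}^s_{F_\varphi}$-saturated subset of $M\times\mathbb{S}^1$. Because $\mathcal{W}^s_f$ is minimal, the projection of $\overline{L}$ to $M$ is all of $M$; so $\overline{L}$ meets every fiber $\{x\}\times\mathbb{S}^1$, and I would argue (using the group structure of $\mathbb{S}^1$ acting on fibers and invariance of $\overline{L}$ under this action composed with stable holonomy) that $\overline{L}\cap(\{x\}\times\mathbb{S}^1)$ is either a finite set of evenly-spaced points or all of $\{x\}\times\mathbb{S}^1$; the latter would force $\overline{L}=M\times\mathbb{S}^1$, contradicting properness. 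Hence $\overline{L}$ is a finite cover of $M$ — equivalently, there is a continuous (in fact $C^1$, by bunching/regularity of the stable holonomy, or after passing to a finite cover a genuine section) map $h: M\to\mathbb{S}^1$ whose graph, suitably interpreted, is invariant under both $f$-dynamics and stable holonomy. The key identity to extract is a cohomological one: $h$ satisfies $h(f(x)) = h(x) + \varphi(x) + c$ for a constant $c$ (the fiber rotation on the invariant compact set), i.e. $\varphi$ is cohomologous to a constant over $f$.

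The final step is to show this cohomological equation forces integrability of $E^s\oplus E^u$. The graph of $h$ (or of its lift to the relevant finite cover) is an $F_\varphi$-invariant $C^1$ submanifold transverse to the fibers; along it, the dynamics of $F_\varphi$ is conjugate to $f$ and the restriction of $TF_\varphi$ to this graph carries exactly the directions $E^s_{F_\varphi}\oplus E^u_{F_\varphi}$ (since $E^c$ is the fiber direction, which is transverse to the graph). Thus $E^s\oplus E^u$ is tangent to the foliation of $M\times\mathbb{S}^1$ by the $\mathbb{S}^1$-translates of this graph — more precisely, since $\varphi$ is cohomologous to a constant, after the coordinate change $(x,\theta)\mapsto(x,\theta - h(x))$ the extension becomes the constant rotation $(x,\theta)\mapsto(f(x),\theta+c)$, for which the horizontal slices $M\times\{\theta_0\}$ are manifestly invariant and tangent to $E^s\oplus E^u$; pulling back gives an integral foliation of the original splitting. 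I expect the main obstacle to be the regularity and ``single-valuedness'' argument in the second step: ruling out that $\overline{L}$ meets fibers in Cantor sets or in a non-section-like way, and upgrading the resulting invariant set to a $C^1$ graph so that the cohomological equation and the final coordinate change are legitimate — this is where the minimality of $\mathcal{W}^s_f$ must be used in an essential way, together with the compactness of $\overline{L}$ and the contraction properties of the stable holonomy cocycle.
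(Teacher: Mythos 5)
Your first two steps track the paper's argument closely: the paper also passes to a minimal $s$-saturated set $K\subset\overline{W}$, uses the commuting fiber rotations $G_\alpha$ to show $K$ meets each fiber $\{y\}\times\mathbb{S}^1$ in a finite, evenly spaced set of constant cardinality $h$ (via upper semicontinuity of the counting function), and concludes that $(K,p_1)$ is an $h$-fold cover of $M$, so that $\mathcal{K}=\bigcup_{\alpha}G_\alpha(K)$ is a $C^0$-foliation of $M\times\mathbb{S}^1$. Up to that point your proposal is sound and essentially identical to the paper's.

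The gap is in your endgame. You propose to extract a cohomological equation $h(f(x))=h(x)+\varphi(x)+c$ and perform the fiber coordinate change $(x,\theta)\mapsto(x,\theta-h(x))$, concluding tangency of the horizontal slices to $E^s\oplus E^u$. This requires two things you do not have: (i) $F_\varphi$-invariance of the covering set (the closure of a single stable leaf of an endomorphism satisfies only $F_\varphi(\overline{L})\subset\overline{\mathcal{W}^s(F_\varphi(z))}$, so the invariance needed for the cocycle identity must be argued, not assumed); and, more seriously, (ii) $C^1$ regularity of the section $h$. The set $K$ is a priori only a closed topological $h$-fold cover, and there is no bunching or holonomy-regularity mechanism here that upgrades it to a $C^1$ graph; without that, the coordinate change is only a homeomorphism and you cannot speak of the tangent bundle of the graph carrying $E^s\oplus E^u$. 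You correctly identify this as the main obstacle but do not resolve it, and the resolution is not a regularity upgrade. The paper instead stays entirely at the $C^0$ level: the leaves of $\mathcal{K}$ are $s$-saturated by construction, and the decisive step (Claim 4) shows they are also $u$-saturated, by observing that an unstable segment joining two distinct leaves $K_1,K_2$ over a common base point would contract under backward iteration, while $F_\varphi$ acts isometrically on the $\mathbb{S}^1$-coordinate and therefore preserves the vertical gap $\mathrm{dist}_c(K_1,K_2)$ --- a contradiction. That purely topological $u$-saturation argument is what makes $\mathcal{K}$ an integral foliation of $E^s\oplus E^u$ (in the sense of a foliation saturated by stable and unstable leaves), and it is the key idea missing from your proposal.
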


A straightforward consequence of Theorem \ref{mainA} is the following.

\begin{cor}
Let $F_{\varphi}$ be as in Theorem A. If $F_{\varphi}$ is  $C^1-$smooth and $m-$preserving, then non-integrability of  $E^s\oplus E^u $ is implies  $F_{\varphi}$ is topologically mixing.
\end{cor}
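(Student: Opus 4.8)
The plan is to reduce topological mixing of $F_\varphi$ to the integrability dichotomy provided by Theorem~\ref{mainA}, using the conservative hypothesis to rule out the bad alternative. First I would recall the standard equivalences for rotation extensions of a fixed base: since $f$ is Anosov and $m$-preserving on $M\times\SS^1$ with $m$ the product of the base measure and Lebesgue on $\SS^1$, the extension $F_\varphi$ is either ergodic, and in that case its spectral type on the $\SS^1$-factor is governed by the cohomological equation for $\varphi$. Concretely, $F_\varphi$ fails to be topologically mixing precisely when there is a nontrivial continuous (indeed smooth, by elliptic regularity for the twisted cohomological equation over an Anosov base) solution to a coboundary-type equation $e^{2\pi i k\varphi(x)}=\lambda\,\psi(f(x))/\psi(x)$ for some integer $k\neq 0$ and some $\lambda\in\SS^1$; this produces a nonconstant eigenfunction $(x,\theta)\mapsto \psi(x)e^{2\pi i k\theta}$ and hence an $F_\varphi$-invariant partition of $M\times\SS^1$ into level sets, obstructing mixing. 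I would make this step precise by invoking the classical theory of isometric (compact group) extensions of Anosov systems.

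Next, the heart of the argument: I claim that the existence of such an eigenfunction forces the extended stable foliation $\W^s_{F_\varphi}$ to be \emph{non}-minimal. Indeed, an $F_\varphi$-eigenfunction with eigenvalue of modulus one is constant along stable manifolds of $F_\varphi$ (a standard Livšic-type / Hopf-argument fact, since forward iteration contracts stable leaves and the eigenfunction is continuous), so its level sets are unions of stable leaves; a nonconstant such function then yields a proper closed $\W^s_{F_\varphi}$-saturated set, so $\W^s_{F_\varphi}$ is not minimal. On the other hand, the hypothesis that $E^s\oplus E^u$ is non-integrable, together with Theorem~\ref{mainA} read contrapositively, tells us that \emph{if} $\W^s_f$ is minimal then $\W^s_{F_\varphi}$ \emph{must} be minimal. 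So I would first dispose of the case where $\W^s_f$ itself is not minimal: for a transitive (in particular conservative, hence by Theorem~\ref{Teo C}-type transitivity, or directly by ergodicity of the $m$-preserving Anosov base) Anosov endomorphism the stable foliation is minimal — this is the non-invertible analogue of the classical fact for Anosov diffeomorphisms and I would cite or briefly reprove it via the density of stable leaves forced by topological transitivity and the expansion in the $E^u$ direction. Thus $\W^s_f$ minimal, so by Theorem~\ref{mainA} $\W^s_{F_\varphi}$ is minimal, contradicting the non-minimality extracted from the putative eigenfunction. Hence no nontrivial eigenfunction exists, and $F_\varphi$ is topologically mixing.

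I would then assemble the steps in the order: (i) minimality of $\W^s_f$ from the Anosov $+$ conservative hypothesis; (ii) the dichotomy "$F_\varphi$ not topologically mixing $\Rightarrow$ nonconstant $L^2$ (hence smooth) eigenfunction of the form $\psi(x)e^{2\pi i k\theta}$", from compact-group-extension theory over Anosov bases; (iii) "eigenfunction $\Rightarrow$ $\W^s_{F_\varphi}$ non-minimal", via invariance of eigenfunctions along stable manifolds; (iv) contradiction with Theorem~\ref{mainA} under non-integrability of $E^s\oplus E^u$. The main obstacle I anticipate is step (ii): carefully justifying that failure of topological mixing for the $\SS^1$-extension genuinely produces a continuous eigenfunction in the endomorphism (rather than diffeomorphism) setting — one must pass to the orbit space $M^f$ and the shift $\bar f$, check that the relevant cohomological equation over $(M^f,\bar f)$ still enjoys Livšic regularity so that the eigenfunction descends to $M$, and confirm that the $m$-preserving assumption supplies the ergodicity of the base needed to invoke these results; the geometric steps (i), (iii) and the logical step (iv) are then comparatively routine.
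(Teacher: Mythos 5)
Your steps (iii) and (iv) are sound, and your overall logical frame (use the contrapositive of Theorem~\ref{mainA} to force minimality of $\mathcal{W}^s_{F_\varphi}$, then derive a contradiction) is the right one. But the proof hinges on step (ii), and that step is a genuine gap, as you yourself suspect. The equivalence ``$F_\varphi$ not topologically mixing $\iff$ there is a \emph{continuous} eigenfunction of the form $\psi(x)e^{2\pi i k\theta}$'' is not a routine fact to be invoked: the eigenfunction criterion you quote is the $L^2$ characterization of weak mixing, which is a measure-theoretic statement and neither implies nor is implied by topological mixing in general. The topological analogue (that a non-mixing compact group extension of a mixing hyperbolic base reduces to a proper closed subgroup, hence yields a continuous solution of the twisted cohomological equation) is the Brin/Field--Parry structure theory, which is proved for \emph{invertible} hyperbolic basic sets using local product structure and holonomy groups; adapting it to non-invertible $f$, where unstable manifolds depend on preorbits and one must work on $M^f$, is a substantially harder task than the corollary itself. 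As written, the central implication ``not mixing $\Rightarrow$ nonconstant continuous eigenfunction'' is asserted, not proved. Your step (i) is also shakier than you present it: minimality of $\mathcal{W}^s_f$ is hypothesis (1) of Theorem~\ref{mainA} (and hence part of ``$F_\varphi$ as in Theorem A''); it is not something the paper derives from conservativity for Anosov endomorphisms on a general base $M$ (only the toral case is covered, via \cite{HH21}).

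The detour through spectral theory is also unnecessary. Once the contrapositive of Theorem~\ref{mainA} gives that $\mathcal{W}^s_{F_\varphi}$ is minimal, topological mixing follows directly: for open $V$ containing a local stable disk $W^s_\delta(y)$, the preimage $F_\varphi^{-n}(V)$ contains a stable disk of radius at least $C^{-1}\nu^{-n}\delta$ centered at any $n$-th preimage of $y$ (the stable bundle, and hence the stable lamination, is well defined for the endomorphism, independently of preorbits); by minimality and compactness there is, for each $\epsilon>0$, a radius $R_0(\epsilon)$ such that every stable disk of radius $R_0(\epsilon)$ is $\epsilon$-dense, so $F_\varphi^{-n}(V)$ meets any fixed open $U$ for all $n$ large. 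This is the ``straightforward consequence'' the paper has in mind, and it avoids eigenfunctions, Liv\v{s}ic regularity, and the group-extension machinery entirely.
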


\begin{cor}[Toral Case]\label{corA}
Let $f: \mathbb{T}^2 \rightarrow \mathbb{T}^2$ be an Anosov endomorphism and $\varphi: \mathbb{T}^2 \rightarrow \mathbb{S}^1$ a $C^1-$map. For the rotation extension $$F_{\varphi}:\mathbb{T}^2 \times \mathbb{S}^1 \rightarrow  \mathbb{T}^2 \times \mathbb{S}^1, F_{\varphi}(x, \theta) = (f(x), \theta + \varphi(x)),$$ if  $\mathcal{W}^s_{F_{\varphi}}$ is not minimal, then $E^s\oplus E^u $ is integrable.
\end{cor}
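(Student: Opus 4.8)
\emph{Proof proposal.} The plan is to deduce Corollary \ref{corA} from Theorem \ref{mainA} by checking that, for an Anosov endomorphism of $\mathbb{T}^2$, hypothesis (1) of Theorem \ref{mainA} is automatic: the stable foliation $\mathcal{W}^s_f$ of any Anosov endomorphism $f\colon\mathbb{T}^2\to\mathbb{T}^2$ is always minimal. Granting this, the statement follows immediately, since hypothesis (2) is precisely what is being assumed, so Theorem \ref{mainA} applies and yields integrability of $E^s\oplus E^u$.

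To see that $\mathcal{W}^s_f$ is minimal, recall that $f$ is homotopic to the linear endomorphism $A\colon\mathbb{T}^2\to\mathbb{T}^2$ induced by the integer matrix acting on $\pi_1(\mathbb{T}^2)$; one has $|\det A|=\deg(f)\neq 0$, and hyperbolicity of $f$ forces eigenvalues $|\lambda_s|<1<|\lambda_u|$. These eigenvalues are irrational: a rational root of the monic integer characteristic polynomial would be an integer dividing $\det A$, and $|\lambda_s|<1$ would then force $\lambda_s=0$, contradicting $\det A\neq 0$. Hence the eigendirections $v_s,v_u$ have irrational slope, so the linear stable foliation of $A$ is the irrational linear foliation, which is minimal. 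Now lift $f$ to $\widetilde f\colon\mathbb{R}^2\to\mathbb{R}^2$; by Proposition \ref{pro 6} this is an Anosov diffeomorphism, and since $x\mapsto\widetilde f(x)-Ax$ is $\mathbb{Z}^2$-periodic, $\widetilde f$ lies at bounded distance from the linear $\widetilde A$. The Franks structural-stability argument, carried out on $\mathbb{R}^2$, then produces a homeomorphism $\widetilde h\colon\mathbb{R}^2\to\mathbb{R}^2$ at bounded distance from the identity with $\widetilde h\circ\widetilde f=\widetilde A\circ\widetilde h$, mapping the lifted stable foliation $\widetilde{\mathcal{W}^s_f}$ onto the foliation by straight lines parallel to $v_s$. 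In particular every leaf of $\widetilde{\mathcal{W}^s_f}$ lies within a uniformly bounded Hausdorff distance of a line of irrational slope, and $\widetilde h$ induces a homeomorphism of the one-dimensional leaf spaces.

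From this picture, $\mathcal{W}^s_f$ has no compact leaves: leaves lift to properly embedded lines, so no leaf is a null-homotopic circle, and an essential circle leaf would lift to a line in a rational direction, contradicting the bounded distance to irrational-slope lines. Moreover the $\mathbb{Z}^2$-action on the leaf space of $\widetilde{\mathcal{W}^s_f}$, transported by $\widetilde h$ to an action on $\mathbb{R}$, is a uniformly bounded perturbation of the action by translations by the projection of $\mathbb{Z}^2$ onto the $v_u$-axis along $v_s$, which is a dense subgroup of $\mathbb{R}$ because $v_s$ has irrational slope. It remains to upgrade this to genuine minimality, i.e.\ to exclude an exceptional (Denjoy-type) minimal set; I would do this by exploiting that $\widetilde f$ induces on the same leaf space a homeomorphism conjugate to the expansion $u\mapsto\lambda_u u$ and carrying $\mathcal{W}^s_f$ to itself, so that a wandering interval of leaves would generate under this expanding action a family of pairwise disjoint wandering intervals whose sizes blow up, contradicting the uniform Hausdorff bound (when $f$ is $C^{1+\alpha}$ one may instead invoke that $\mathcal{W}^s_f$ is then $C^{1+\alpha}$ and apply Denjoy's theorem directly). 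This last step — ruling out an exceptional minimal set at the $C^1$ regularity of the hypotheses — is the step I expect to be the main obstacle, since the purely soft information (quasi-isometry to an irrational linear foliation together with the $\mathbb{Z}^2$-symmetry) does not by itself forbid Denjoy behaviour, and one must feed in the hyperbolic dynamics of $f$ along the leaves. Once minimality of $\mathcal{W}^s_f$ is established, hypothesis (1) of Theorem \ref{mainA} holds, and as $\mathcal{W}^s_{F_\varphi}$ is assumed non-minimal, Theorem \ref{mainA} gives that $E^s\oplus E^u$ is integrable.
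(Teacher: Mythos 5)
Your reduction is exactly the paper's: Corollary~\ref{corA} follows from Theorem~\ref{mainA} once one knows that the stable foliation of \emph{every} Anosov endomorphism of $\mathbb{T}^2$ is minimal. The paper, however, does not prove this minimality at all -- it simply cites Theorem~B of Hammerlindl--Hall \cite{HH21}. You instead attempt a self-contained proof, and the gap you yourself flag is real and is precisely where the content of that cited theorem lives.

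Concretely: the soft picture you set up (leaves of $\widetilde{\mathcal{W}^s_f}$ at bounded Hausdorff distance from lines of irrational slope, leaf space homeomorphic to $\mathbb{R}$, deck group acting on it by a bounded perturbation of a dense group of translations) does \emph{not} imply that every leaf is dense. A homeomorphism of $\mathbb{R}$ at bounded distance from an irrational translation can perfectly well preserve a proper closed subset such as $\mathbb{Z}$, so a nonempty closed $s$-saturated set need not project onto the whole leaf space; excluding this requires genuine dynamical input along the leaves, not just the quasi-isometry and the $\mathbb{Z}^2$-symmetry. Your sketch for supplying that input (blowing up a wandering gap under the induced expansion on the leaf space) is not carried out, and the fallback via Denjoy's theorem needs $C^{1+\alpha}$ regularity of a one-dimensional quotient dynamics that you have not constructed -- the hypotheses here are only $C^1$. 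There is also a secondary inaccuracy: for a non-special Anosov endomorphism the Franks-type conjugacy $\widetilde h$ on $\mathbb{R}^2$ cannot commute with all deck transformations (otherwise it would descend to a conjugacy on $\mathbb{T}^2$, contradicting Przytycki's examples), so you may not transport the $\mathbb{Z}^2$-action to the literal translation action, only to a bounded perturbation of it -- which feeds back into the gap above. The fix is simply to do what the paper does and invoke \cite[Theorem~B]{HH21}; with that citation in place, the rest of your argument (hypothesis (1) of Theorem~\ref{mainA} is automatic, hypothesis (2) is assumed, conclude integrability) is correct and identical to the paper's.
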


\begin{proof}
Follows from Theorem \ref{mainA} and the minimality of $\mathcal{W}^s_{f}$ on $\mathbb T^2$ \cite[Theorem B]{HH21}.
\end{proof}

 Let $m$ denote a $C^{\infty}-$normalized volume on $M.$
\begin{maintheorem}[Ergodicity from Non-integrability]\label{mainB} Let $F_{\varphi}$ be as in Theorem A. If $F_{\varphi}$ is  $C^2-$smooth,  $m-$preserving map, and  $E^s\oplus E^u $ is not integrable, then $(F_{\varphi}, m)$ is ergodic.
\end{maintheorem}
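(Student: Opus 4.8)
The plan is to run the Hopf argument for $F_\varphi$, using non-integrability of $E^s\oplus E^u$ to supply accessibility. First I would record the geometry of the extension: $F_\varphi$ is a $C^2$ partially hyperbolic endomorphism whose center bundle is the $\mathbb{S}^1$-fiber direction, so $\dim E^c=1$ and $F_\varphi$ is automatically $c$-special; moreover, since $\varphi$ is $C^1$ and $f$ is Anosov, a Lyapunov--Perron graph-transform argument identifies the stable manifolds: $W^s_{F_\varphi}(x,\theta)$ is the graph $\{(y,\theta+\beta^s(x,y)):y\in W^s_f(x)\}$ with $\beta^s(x,y)=\sum_{k\ge 0}\bigl(\varphi(f^kx)-\varphi(f^ky)\bigr)$, the series converging because $d(f^kx,f^ky)\to 0$ exponentially; the unstable manifolds, read along an orbit $\bar x\in M^f$, are likewise graphs over the corresponding unstable leaves of $f$. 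Passing to the orbit space $\widehat M:=(M\times\mathbb{S}^1)^{F_\varphi}$, the shift $\bar F_\varphi$ is a homeomorphism carrying genuine $\bar F_\varphi$-invariant continuous stable and unstable foliations $\widehat{\mathcal W}^s,\widehat{\mathcal W}^u$ with one-dimensional center, and $(F_\varphi,m)$ is ergodic if and only if $(\bar F_\varphi,\bar m)$ is ergodic, where $\bar m$ is the inverse-limit lift of $m$; this is the device that removes the noninvertibility, since on $\widehat M$ the unstable direction is unambiguous.

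Second, I would show that non-integrability of $E^s\oplus E^u$ forces $F_\varphi$ (equivalently $\bar F_\varphi$) to be accessible. Since $\dim E^c=1$, the dichotomy of Rodriguez Hertz--Rodriguez Hertz--Ures \cite{HHU}, in the form adapted to endomorphisms earlier in the paper, applies: the union $\mathcal{O}$ of the open accessibility classes is open and $su$-saturated, and if $\mathcal{O}\ne M\times\mathbb{S}^1$ then $(M\times\mathbb{S}^1)\setminus\mathcal{O}$ is a nonempty closed $su$-saturated set laminated by $C^1$ codimension-one leaves everywhere tangent to $E^s\oplus E^u$; such a leaf is a local integral manifold of $E^s\oplus E^u$, contradicting the hypothesis. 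Hence every accessibility class is open, and since $M\times\mathbb{S}^1$ is connected there is a single class: $F_\varphi$ is accessible.

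Third --- and this is the crux --- I would deploy the Hopf argument on the orbit space. Because $F_\varphi$ is $C^2$, its stable foliation is absolutely continuous, and on $\widehat M$ the unstable foliation $\widehat{\mathcal W}^u$ is absolutely continuous with respect to $\bar m$, its unstable conditionals being equivalent to Riemannian volume on unstable plaques with densities controlled by the unstable Jacobian cocycle of $f$ evaluated along preimage histories. Given a $\bar F_\varphi$-invariant $\phi\in L^2(\bar m)$, Birkhoff's theorem together with the contraction/expansion estimates shows that $\phi$ coincides $\bar m$-a.e. with a function constant on $\widehat{\mathcal W}^s$-plaques and $\bar m$-a.e. with a function constant on $\widehat{\mathcal W}^u$-plaques; combining absolute continuity of both foliations with accessibility --- here essential accessibility already suffices, and in the present $\mathbb{S}^1$-extension case the required $su$-path/julienne-density step collapses to Brin's transitivity-of-the-holonomy-group argument for compact group extensions --- one concludes that $\phi$ is $\bar m$-a.e. constant. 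Thus $(\bar F_\varphi,\bar m)$, and hence $(F_\varphi,m)$, is ergodic. The main obstacle is precisely this last step in the noninvertible setting: one must control the unstable conditional measures of the inverse-limit measure $\bar m$ (which genuinely depend on the whole backward orbit, not merely on the point of $M$) and verify the absolute-continuity and density estimates underlying the Hopf argument on $\widehat M$ rather than on $M\times\mathbb{S}^1$, where the unstable manifolds are not even well-defined.
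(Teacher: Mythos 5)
Your proposal goes by a genuinely different route from the paper, and the route has a gap at its crux. The paper never runs the Hopf argument: it takes the ergodic decomposition of $m$, shows (via Pesin's formula plus Ruelle's inequality and the Jacobs formula) that almost every ergodic component is SRB, and then uses the commutation $G_\theta\circ F_\varphi=F_\varphi\circ G_\theta$ together with the density of the $s$-saturation of an unstable disk (minimality of $\mathcal W^s_{F_\varphi}$, which is the contrapositive of Theorem~A under hypothesis (1)) and absolute continuity of the center foliation to show that any two ergodic components $\mu_P,\mu_Q$ satisfy $\mu_P=(G_\theta)_*\mu_Q$ for arbitrarily small $\theta$, forcing $\delta<\delta/2$. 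This sidesteps entirely the ``accessibility $\Rightarrow$ ergodicity'' machinery that your argument leans on.

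The gap in your version is the third step. The statement that a Birkhoff-invariant $L^2$ function which is a.e.\ constant on stable plaques and a.e.\ constant on unstable plaques must be a.e.\ constant under accessibility is exactly the hard part of the Pugh--Shub program: absolute continuity of the two foliations is not enough, because the full-measure sets of leafwise constancy are not preserved by the $su$-holonomies; this is why juliennes and center bunching are needed in Burns--Wilkinson, and the only endomorphism version on record (Tyler's Theorem~1.7, quoted in the paper) additionally assumes constant Jacobian, which is not a hypothesis here (volume preservation for a degree-$n$ local diffeomorphism only gives $\sum_{y\in f^{-1}(x)}|\det Df(y)|^{-1}=1$, not constancy of $\det Df$). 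Your assertion that the step ``collapses to Brin's transitivity-of-the-holonomy-group argument'' conflates topological transitivity --- which is what Brin's argument yields, and is Theorem~C of the paper --- with ergodicity; Brin--Pesin group-extension theory would have to be rebuilt on the inverse limit, and you do not do this. A secondary problem: you invoke the Rodriguez Hertz--Rodriguez Hertz--Ures dichotomy (non-open accessibility classes form a codimension-one lamination tangent to $E^s\oplus E^u$) ``as adapted to endomorphisms earlier in the paper,'' but no such adaptation appears there; the accessibility you need is instead derived in the paper from non-integrability via Brin quadrilaterals and minimality of $\mathcal W^s_{F_\varphi}$, and you should reroute that step accordingly. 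Your first step (graph description of $W^s_{F_\varphi}$ via $\beta^s(x,y)=\sum_{k\ge0}(\varphi(f^kx)-\varphi(f^ky))$ and the reduction to the inverse limit) is correct and unobjectionable.
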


While \cite{AKS} proved an analogous result for an Anosov diffeomorphism acting on the base $\mathbb{T}^2$ using Fourier analysis, our approach adapts  Pesin's theory  to endomorphisms. This framework extends ergodicity results to general base manifolds $M,$
bypassing the need for Fourier techniques.

\begin{maintheorem}[Topological Transitivity via Accessibility]\label{accBrin}  Let $f: M \rightarrow M$ be an accessible,  $m-$preserving partially hyperbolic endomorphism. Then $f$ is topologically transitive.
\end{maintheorem}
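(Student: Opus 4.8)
The plan is to transfer the problem to the orbit space $M^{f}$, where the shift $\bar f$ is a homeomorphism, to reduce transitivity to a statement about $\bar f$-invariant open sets, and then to run a Brin-type saturation argument powered by the fact that the invariant measure has full support. Recall from the excerpt that $p\colon M^{f}\to M$ is a continuous surjection with $p\circ\bar f=f\circ p$, so $f$ is a topological factor of $\bar f$ and it suffices to prove that $\bar f$ is topologically transitive. The measure $m$ has an $\bar f$-invariant lift $\bar m$ with $p_{*}\bar m=m$; since $f$ is surjective and $m$ has full support, a direct computation on cylinder sets $\{\bar y:\ d(y_{i},x_{i})<\varepsilon,\ |i|\le n\}=p_{-n}^{-1}(A)$ shows $\bar m(\text{cylinder})=m(A)>0$, so $\bar m$ has full support on $M^{f}$. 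By the invariant manifold theory underlying Proposition~\ref{pro 6}, the stable and unstable distributions of $\bar f$ integrate to laminations $\mathcal{W}^{s}_{\bar f},\mathcal{W}^{u}_{\bar f}$ of $M^{f}$ with local leaves $W^{s}_{\delta_{0}}(\bar x),W^{u}_{\delta_{0}}(\bar x)$ of a uniform size $\delta_{0}>0$ varying continuously with $\bar x$ (the absolute partial hyperbolicity is used here); note $\mathcal{W}^{u}_{\bar f}=\mathcal{W}^{s}_{\bar f^{-1}}$ and vice versa. Finally, accessibility of $f$ lifts to accessibility of $\bar f$ (a stable path lifts within $\mathcal{W}^{s}_{\bar f}$, an unstable path lifts within the unstable leaf of the relevant history).

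\textbf{Reduction to invariant open sets.} Since $\bar f$ is a homeomorphism preserving the full-support measure $\bar m$, Poincaré recurrence applied to $\bar f$ and to $\bar f^{-1}$ shows that the forward-recurrent and the backward-recurrent points each form a residual set of full measure. A short argument then shows that for such a map two-sided transitivity ($\forall$ nonempty open $U,V$ $\exists j\in\mathbb{Z}$ with $\bar f^{j}(U)\cap V\neq\emptyset$) is equivalent to topological transitivity: a point $\bar y$ with dense $\mathbb{Z}$-orbit that is also forward recurrent, say $\bar f^{n_{k}}(\bar y)\to\bar y$, satisfies $\bar f^{j}(\bar y)=\lim_{k}\bar f^{n_{k}+j}(\bar y)\in\omega(\bar y)$ for every $j\in\mathbb{Z}$, whence $\omega(\bar y)=M^{f}$ and $\bar y$ has dense forward orbit. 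Thus it suffices to prove that for every nonempty open $V\subseteq M^{f}$ the set $O=O(V):=\bigcup_{j\in\mathbb{Z}}\bar f^{j}(V)$ is dense; note $O$ is open and totally $\bar f$-invariant, i.e. $\bar f(O)=O=\bar f^{-1}(O)$.

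\textbf{Core saturation lemma.} I claim $\overline{O}$ is uniformly locally $su$-saturated: $W^{s}_{\delta_{0}}(\bar q)\cup W^{u}_{\delta_{0}}(\bar q)\subseteq\overline{O}$ for every $\bar q\in\overline{O}$. For the unstable case, forward-recurrent points are dense in $O$ (full measure, full support); fix such a $\bar q\in O$ with $\bar f^{n_{k}}(\bar q)\to\bar q$ and put $r=\min(\delta_{0},\operatorname{dist}(\bar q,M^{f}\setminus O))>0$, so $W^{u}_{r}(\bar q)\subseteq B_{r}(\bar q)\subseteq O$. Since $\bar f(O)\subseteq O$ and $\bar f$ expands unstable leaves by at least $C^{-1}\mu^{n}$ intrinsically, $\bar f^{n_{k}}(W^{u}_{r}(\bar q))\subseteq O$ is an unstable plaque through $\bar f^{n_{k}}(\bar q)$ of intrinsic radius $\ge C^{-1}\mu^{n_{k}}r\to\infty$, hence contains $W^{u}_{\delta_{0}}(\bar f^{n_{k}}(\bar q))$ for all large $k$. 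Letting $k\to\infty$ and using continuity of $\mathcal{W}^{u}_{\bar f}$ gives $W^{u}_{\delta_{0}}(\bar q)\subseteq\overline{O}$; density of such $\bar q$ in $\overline{O}$ together with continuity of $\mathcal{W}^{u}_{\bar f}$ upgrades this to all $\bar q\in\overline{O}$. The stable case is the identical argument for $\bar f^{-1}$, for which $O$ is still totally invariant, $\bar m$ is still invariant with full support, and $\mathcal{W}^{s}_{\bar f}=\mathcal{W}^{u}_{\bar f^{-1}}$.

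\textbf{Conclusion and the hard step.} Any $su$-path can be subdivided into finitely many arcs of diameter $<\delta_{0}$, each contained in a single local stable or unstable leaf; hence uniform local $su$-saturation of the closed set $\overline{O}$ forces $\overline{O}$ to contain the whole accessibility class of every one of its points. Since $O\neq\emptyset$ and $\bar f$ is accessible, $\overline{O}=M^{f}$; as $V$ was arbitrary this gives two-sided transitivity of $\bar f$, hence transitivity of $\bar f$, hence of $f$. The main obstacle is the core saturation lemma: one must establish continuity of the laminations $\mathcal{W}^{s/u}_{\bar f}$ and the existence of a uniform local size $\delta_{0}$ on the non-manifold space $M^{f}$, and, crucially, handle points $\bar q\in\overline{O}$ lying on or near the topological boundary of $O$, where $\operatorname{dist}(\bar q,M^{f}\setminus O)$ degenerates — this is precisely the place where recurrence (not mere forward invariance) is indispensable, and it is the endomorphism counterpart of Brin's classical transitivity argument for accessible partially hyperbolic diffeomorphisms.
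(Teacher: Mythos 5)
Your overall architecture (pass to a space where the dynamics is invertible, show that the closure of a totally invariant open set is locally $su$-saturated via recurrence and uniform expansion, then invoke accessibility) is the right Brin-type skeleton, and your reductions (transitivity of $f$ from transitivity of $\bar f$, full support of $\bar m$, two-sided versus forward transitivity via recurrence) are sound. However, there is a genuine gap at the single step that carries the whole difficulty of the non-invertible setting: the assertion that ``accessibility of $f$ lifts to accessibility of $\bar f$.'' In $M^f$ the unstable leaf through $\bar z$ is the one determined by the \emph{specific} history encoded in $\bar z$, and (since the fiber of histories over a point is totally disconnected when $\deg(f)>1$) the path-component of the stable set of $\bar z$ also carries a history determined by $\bar z$. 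Definition~\ref{accdef}, by contrast, allows each unstable segment of the $su$-path to live in $W^u_f(\tilde z)$ for an \emph{arbitrary} lift $\tilde z$ of its starting point. When you lift the path to $M^f$, the previous segment ends at some $\bar z_{j-1}$ whose history need not be the lift $\tilde z_{j-1}$ used in the accessibility path, and the given unstable segment simply does not lie in $W^u_{\bar f}(\bar z_{j-1})$. So accessibility of $f$ on $M$ does not imply that the accessibility classes of the laminations $\mathcal{W}^{s/u}_{\bar f}$ on $M^f$ are all of $M^f$, and your final step ($\overline{O}$ contains the accessibility class of each of its points, hence equals $M^f$) has nothing to bite on. You flag ``the hard step'' as continuity of the laminations on $M^f$ and boundary behaviour of $O$, but the real obstruction is this mismatch of histories.

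This is exactly the point the paper's proof is built around: it stays on $M$, propagates $f^k(B_\varepsilon(x))$ along the $su$-path with two lemmas, and the unstable one (Lemma~\ref{accu}) uses Poincar\'e recurrence to send a small unstable plaque (with whatever history it happens to carry) through a prescribed backward itinerary $V_N\to U$ so that, after many iterates, its image is $C^1$-close to the unstable disc $W^u_f(\tilde z_i)$ of the \emph{prescribed} history appearing in the accessibility path; only then does it meet $B_\varepsilon(z_{i+1})$. Some mechanism of this kind --- recurrence used to realign histories, not merely to produce returns --- is what your proposal is missing. If you want to keep the inverse-limit formulation, you would need to prove separately that, say, $\overline{O}$ contains the full fiber $p^{-1}(x)$ over each $x\in p(\overline{O})$ (so that all histories, hence all unstable leaves over $x$, become available), and that claim again requires an argument of the same nature as Lemma~\ref{accu}.
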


This aligns with Brin's classical result for diffeomorphisms \cite{Br}, extended here to endomorphisms.

When $dim E^c = 1,$  accessibility  implies stable accessibility, \cite{He}. Combining this with Theorem \ref{accBrin}, we obtain:

\begin{theorem}[Robust Transitivity] If  $f: M \rightarrow M$ is an $m-$preserving, accessible partially hyperbolic endomorphism with $dim E^c = 1$, then $f$ is $C^1-$robustly transitive among  $m-$preserving $C^1-$endomorphisms.
\end{theorem}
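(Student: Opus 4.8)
The plan is to combine the two ingredients that precede the statement: the implication ``accessibility $\Rightarrow$ stable accessibility when $\dim E^c=1$'' from \cite{He}, and Theorem \ref{accBrin}, which says an accessible conservative partially hyperbolic endomorphism is topologically transitive. The skeleton is short: take $f$ accessible, $m$-preserving, partially hyperbolic, with $\dim E^c=1$; by \cite{He} there is a $C^1$-neighborhood $\mathcal U$ of $f$ in the space of $C^1$-endomorphisms such that every $g\in\mathcal U$ is accessible; after possibly shrinking $\mathcal U$ we may also assume every $g\in\mathcal U$ is partially hyperbolic (the Corollary after Proposition \ref{pro 6} gives $C^1$-openness of partial hyperbolicity, and the $\dim E^c=1$ condition is inherited since the splitting varies continuously). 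Then for every $m$-preserving $g\in\mathcal U$, Theorem \ref{accBrin} applies and yields that $g$ is topologically transitive. That is exactly $C^1$-robust transitivity among $m$-preserving $C^1$-endomorphisms.

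So the steps, in order, would be: (i) invoke $C^1$-openness of partial hyperbolicity to fix a neighborhood $\mathcal U_1$ of $f$ of partially hyperbolic endomorphisms, and check that the center dimension stays equal to $1$ on $\mathcal U_1$ — this is immediate from continuity of the dominated splitting in the $C^1$ topology, using Proposition \ref{pro 6} to pass to the universal cover where the usual diffeomorphism cone-field arguments apply; (ii) invoke \cite{He} to get a (possibly smaller) neighborhood $\mathcal U\subset\mathcal U_1$ on which accessibility is stable, i.e. every $g\in\mathcal U$ is accessible; (iii) restrict to the $m$-preserving elements of $\mathcal U$ and apply Theorem \ref{accBrin} to each such $g$ to conclude topological transitivity; (iv) conclude, by definition, that $f$ is $C^1$-robustly transitive among $m$-preserving $C^1$-endomorphisms.

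The only subtlety worth spelling out — and the step I would treat most carefully — is compatibility of the conservativity hypothesis with the perturbation: Theorem \ref{accBrin} requires the perturbed map $g$ to preserve $m$, and ``robust transitivity among $m$-preserving endomorphisms'' only quantifies over such $g$, so there is nothing to prove here beyond being explicit that the relevant perturbation class is the conservative one; \cite{He} must be applied within, or at least be compatible with, that class so that accessibility of $g$ is available simultaneously with $g\in\mathcal U$ and $g$ conservative. If \cite{He} is stated for the full $C^1$-neighborhood (not just the conservative slice), intersecting with the conservative endomorphisms is harmless. I do not anticipate a genuine obstacle: once $\dim E^c=1$ is pinned down and stable accessibility is in hand, the result is a formal consequence of Theorem \ref{accBrin}. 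The main thing to verify carefully is that every hypothesis of Theorem \ref{accBrin} — partial hyperbolicity, accessibility, and $m$-invariance — holds uniformly for all $g$ in the chosen neighborhood, which is precisely what steps (i) and (ii) secure.
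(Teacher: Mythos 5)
Your proposal is correct and is essentially the paper's own argument: the theorem is stated there as an immediate combination of the fact from \cite{He} that accessibility is stable (hence holds on a $C^1$-neighborhood) when $\dim E^c=1$, with Theorem \ref{accBrin} applied to each $m$-preserving map in that neighborhood. Your extra care about openness of partial hyperbolicity and restricting to the conservative class is a reasonable fleshing-out of details the paper leaves implicit.
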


\begin{cor}[Robust Transitivity from Non-Integrability] Let $F_{\varphi}$ be as in Theorem \ref{mainA}. If $E^s \oplus E^u$ is not integrable, then $F_{\varphi}$ is $C^1-$robustly transitive among  all $m-$preserving $C^1-$endomorphisms.
\end{cor}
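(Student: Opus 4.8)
The plan is to deduce the statement from the Robust Transitivity theorem above by proving that, in the present situation, non-integrability of $E^s\oplus E^u$ forces $F_\varphi$ to be \emph{accessible}. First I would record the soft facts: $F_\varphi$ is a partially hyperbolic endomorphism of $M\times\mathbb{S}^1$ whose center bundle $E^c$ is everywhere tangent to the $\mathbb{S}^1$-fibers, so $\dim E^c=1$, while $E^s$ and $E^u$ are the (orbit-wise) lifts of the stable and unstable bundles of $f$. Since we are in the conservative setting, $F_\varphi$ preserves $m$ (this holds as soon as $f$ preserves a smooth volume on $M$, because the rotation-extension structure then preserves its product with Lebesgue on the fiber). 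Thus the only hypothesis of the Robust Transitivity theorem still to be checked is accessibility.

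The heart of the proof is the implication: \emph{if $E^s\oplus E^u$ is not integrable, then $F_\varphi$ is accessible}, which I would prove by contraposition. The fiber rotations $R_\alpha(x,\theta)=(x,\theta+\alpha)$ commute with $F_\varphi$, hence carry stable leaves to stable leaves and unstable leaves to unstable leaves, so they permute the accessibility classes: $AC(R_\alpha z)=R_\alpha(AC\,z)$. In particular, if some accessibility class is open then so is every accessibility class, i.e. $F_\varphi$ is accessible; contrapositively, if $F_\varphi$ is not accessible then \emph{no} accessibility class is open. Now I would invoke the structure theory of accessibility classes for systems with one-dimensional center (Rodriguez Hertz--Rodriguez Hertz--Ures), in the endomorphism version obtained by lifting $f$ to the universal cover via Proposition \ref{pro 6}, or by working in the orbit space $M^f\times\mathbb{S}^1$ where $F_\varphi$ becomes a homeomorphism with local product structure, and then projecting: the non-open accessibility classes are $C^1$ injectively immersed submanifolds tangent to $E^s\oplus E^u$. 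Since every class is non-open, these submanifolds partition $M\times\mathbb{S}^1$ and assemble into an $F_\varphi$-invariant foliation tangent to $E^s\oplus E^u$ --- that is, $E^s\oplus E^u$ is integrable, contradicting the hypothesis. (Heuristically this is the usual dichotomy for $\mathbb{S}^1$-extensions: the group of $su$-loop holonomies over the su-accessible base $f$ is a closed subgroup of $\mathbb{S}^1$; if it were proper it would be finite, producing a finite-cover reduction and an invariant $su$-foliation, so non-integrability forces it to be all of $\mathbb{S}^1$, which is exactly accessibility.)

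Given accessibility, the Robust Transitivity theorem applies verbatim: $F_\varphi$ is an $m$-preserving accessible partially hyperbolic endomorphism with $\dim E^c=1$, hence $C^1$-robustly transitive among $m$-preserving $C^1$-endomorphisms. Unwinding that theorem: by \cite{He} accessibility is $C^1$-open in this class, and by Theorem \ref{accBrin} every $m$-preserving accessible partially hyperbolic endomorphism is topologically transitive, so a whole $C^1$-neighborhood of $F_\varphi$ consists of transitive maps.

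The main obstacle is making the middle paragraph rigorous for endomorphisms: one must handle the non-uniqueness of $E^u$ along different pre-orbits and the non-compactness of the universal cover, so the cleanest route is to carry out the Hertz--Hertz--Ures arguments on the compact orbit space $M^f\times\mathbb{S}^1$ (where the shift of $F_\varphi$ is a genuine homeomorphism, the local $su$-product structure is available, and accessibility classes are well behaved) and then push the conclusion down to $M\times\mathbb{S}^1$. Everything else is either bookkeeping (partial hyperbolicity and conservativity of $F_\varphi$) or a direct citation (the Robust Transitivity theorem, \cite{He}, Theorem \ref{accBrin}).
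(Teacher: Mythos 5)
Your overall architecture coincides with the paper's: check that $F_\varphi$ is a conservative partially hyperbolic endomorphism with $\dim E^c=1$, establish accessibility, and then quote the Robust Transitivity theorem (accessibility is $C^1$-open by \cite{He} when $\dim E^c=1$, and accessible conservative systems are transitive by Theorem \ref{accBrin}). The divergence, and the problem, is entirely in how you get accessibility from non-integrability. The paper does not argue by contraposition through the structure of accessibility classes; it uses the contrapositive of Theorem \ref{mainA} (non-integrability plus minimality of $\mathcal{W}^s_f$ forces $\mathcal{W}^s_{F_\varphi}$ to be minimal) and then runs Brin's quadrilateral argument on the two possible non-integrability configurations. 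That route is self-contained given Theorem \ref{mainA}, which the paper has already proved, so the corollary is essentially a two-line deduction.

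Your middle paragraph, by contrast, rests on a statement that is not available in the endomorphism setting: that non-open accessibility classes are $C^1$ injectively immersed codimension-one submanifolds tangent to $E^s\oplus E^u$ which assemble into a foliation. This is the Rodriguez Hertz--Rodriguez Hertz--Ures structure theorem for \emph{diffeomorphisms} with one-dimensional center, and its hypotheses break here in a way that is not just bookkeeping. With the accessibility notion of Definition \ref{accdef}, the class of a point is saturated by the unstable manifolds of \emph{all} preorbits; by Theorem \ref{Teo C} a non-$u$-special endomorphism has residually many points with infinitely many unstable directions, so ``tangent to $E^s\oplus E^u$'' is not even well posed pointwise and a class need not be an immersed submanifold of the expected dimension. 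Your two proposed repairs each lose something essential: $M^f\times\mathbb{S}^1$ is not a manifold (it is typically a solenoid), so ``immersed submanifold'' and ``foliation'' do not transfer, and the accessibility classes there (one unstable leaf per point) project to sets smaller than the classes downstairs; the universal cover is non-compact, which defeats the compactness arguments in \cite{HHU}. There is also a smaller gap earlier: rotation-equivariance alone shows that all classes through a single fiber are simultaneously open, not that one open class forces accessibility; closing that step requires the minimality of $\mathcal{W}^s_f$ (a standing hypothesis of Theorem \ref{mainA} that you never invoke). The efficient fix is simply to replace your middle paragraph by the paper's Section 4 theorem: under the hypotheses of Theorem \ref{mainA}, non-integrability of $E^s\oplus E^u$ already implies accessibility.
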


\begin{maintheorem}[Ergodicity for Expanding Base Maps] \label{mainD} Let $f: M \rightarrow M$ be a $C^2-$expanding map and $F: M \times \mathbb{S}^1 \rightarrow M \times \mathbb{S}^1$ a skew product of the form $F(x,\theta) = (f(x), \theta + \varphi(x)),$ where $\varphi: M \rightarrow \mathbb{S}^1$ is  $C^2-$smooth. If $F$ is:
\begin{enumerate}
\item Topologically transitive,
\item Volume preserving (with $m$ a normalized volume measure),

\end{enumerate}
then $F$ is ergodic.
\end{maintheorem}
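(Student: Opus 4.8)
The plan is to combine the classical ergodic theory of smooth expanding maps with a Fourier expansion along the $\mathbb{S}^1$-fibers and a Liv\v{s}ic-type measurable rigidity argument. The underlying idea is that a nonconstant $F$-invariant $L^2$ function would, through its fiber Fourier coefficients, produce a measurable solution of a cohomological equation over $f$; measurable rigidity then upgrades this to a \emph{continuous} invariant function on $M\times\mathbb{S}^1$, which topological transitivity forbids.

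First I would push the dynamics down to the base. The projection $\pi\colon M\times\mathbb{S}^1\to M$ satisfies $\pi\circ F=f\circ\pi$ and $\pi_\ast m=m_M$, the normalized volume on $M$, so $F_\ast m=m$ forces $f_\ast m_M=m_M$. A $C^2$ expanding map of a compact manifold admits a unique absolutely continuous invariant probability measure, which is moreover exact; since the volume $m_M$ is invariant and absolutely continuous, by uniqueness $(f,m_M)$ is this measure, hence exact, in particular ergodic and mixing. Also $f$ is a topological factor of the transitive map $F$, so $f$ is topologically transitive, and being expanding it has dense periodic points.

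Next, let $\psi\in L^2(m)$ satisfy $\psi\circ F=\psi$ and expand $\psi(x,\theta)=\sum_{k\in\mathbb{Z}}\psi_k(x)e^{2\pi i k\theta}$ with $\psi_k\in L^2(m_M)$. Matching Fourier coefficients in $\psi\circ F=\psi$ gives, for every $k$,
\[
\psi_k(f(x))\,e^{2\pi i k\varphi(x)}=\psi_k(x)\qquad\text{for }m_M\text{-a.e. }x .
\]
For $k=0$ this says $\psi_0\circ f=\psi_0$, so $\psi_0$ is constant by ergodicity of $(f,m_M)$. For $k\neq0$, taking moduli shows $|\psi_k|$ is $f$-invariant, hence a constant $c_k\geq0$; if $c_k=0$ then $\psi_k\equiv0$, so it suffices to reach a contradiction when some $c_k>0$. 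In that case $u_k:=\psi_k/|\psi_k|\colon M\to\mathbb{S}^1$ is a measurable solution of $u_k(f(x))=u_k(x)\,e^{-2\pi i k\varphi(x)}$; equivalently, $e^{2\pi i k\varphi}$ is a measurable $\mathbb{S}^1$-coboundary for $f$.

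The crux — and the step I expect to be the main obstacle — is the measurable rigidity. Since $\varphi$ is $C^2$, the cocycle generator $x\mapsto e^{-2\pi i k\varphi(x)}$ is H\"older, so a Liv\v{s}ic-type theorem for circle-valued (more generally compact-group-valued) cocycles over a transitive expanding map should apply and yield a continuous $\widetilde u_k\colon M\to\mathbb{S}^1$ with $\widetilde u_k=u_k$ $m_M$-a.e.\ and $\widetilde u_k(f(x))=\widetilde u_k(x)\,e^{-2\pi i k\varphi(x)}$ for \emph{every} $x$; the delicate points are the measurable-to-continuous direction and its proper formulation in the endomorphism setting (where one works with preimage branches of $f$ and density of periodic points). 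Granting this, define $H\colon M\times\mathbb{S}^1\to\mathbb{S}^1$ by $H(x,\theta):=\widetilde u_k(x)\,e^{2\pi i k\theta}$. The cohomological equation gives $H\circ F=H$, $H$ is continuous, and $H$ is nonconstant because $\theta\mapsto e^{2\pi i k\theta}$ is nonconstant ($k\neq0$). Since a topologically transitive map has no nonconstant continuous invariant function, this is a contradiction. Therefore every $F$-invariant $L^2$ function is constant, i.e.\ $(F,m)$ is ergodic.
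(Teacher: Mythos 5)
Your argument is correct in outline, but it follows a genuinely different route from the paper. The paper never touches Fourier analysis here: it takes the ergodic decomposition of $m$, shows (via Pesin's entropy formula, the Jacobs formula and Ruelle's inequality) that almost every ergodic component is SRB, places an unstable disk in the basin of each component, thickens these disks into vertical tubes foliated by unstable manifolds, and then uses topological transitivity of $F$ together with the commuting vertical rotations $G_r$ and absolute continuity to force the basins of two putative distinct components to meet after an arbitrarily small rotation, yielding $\mu_r=\nu_s$ and a quantitative contradiction with $\delta>0$. Your route instead reduces everything to the base: the fiberwise Fourier expansion correctly turns an invariant $L^2$ function into the family of twisted equations $\psi_k(f(x))e^{2\pi ik\varphi(x)}=\psi_k(x)$, ergodicity of $(f,m_M)$ (which you correctly identify as the unique a.c.i.p.\ of the expanding map) kills $k=0$, and for $k\neq0$ you invoke measurable Liv\v{s}ic rigidity to promote the measurable solution $u_k$ to a continuous one and contradict transitivity. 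The whole weight of your proof rests on that rigidity step, which you rightly flag; it is a true theorem (for H\"older $\mathbb{S}^1$-valued cocycles over a transitive $C^{1+\alpha}$ expanding map, with respect to the a.c.i.p., which is a fully supported Gibbs state --- see Liv\v{s}ic's measurable theorem and its compact-group versions by Parry--Pollicott and Pollicott--Walkden, or the group-extension ergodicity criteria of Keynes--Newton and Brin), so you should cite it rather than prove it, and note that the non-invertibility is handled by working with inverse branches. The trade-off: your approach exploits the abelian group structure of the fiber and the spectral theory of the base, and in return gives a sharper dichotomy (non-ergodicity iff some $e^{2\pi ik\varphi}$ is a continuous coboundary); the paper's SRB-basin argument is less structure-dependent and is the one the authors can also run over Anosov endomorphism bases (Theorem B), which is why they advertise it as bypassing Fourier techniques.
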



Combining Theorems \ref{accBrin} and \ref{mainD} we obtain the following corollary:

\begin{cor}[Ergodicity of Accessible Skew Products]  Let $f: M \rightarrow M$ a $C^2-$expanding map, and let $F: M \times \mathbb{S}^1 \rightarrow M \times \mathbb{S}^1$ a skew product of the form $$F(x,\theta) = (f(x), \theta + \varphi(x)),$$ where $\varphi: M \rightarrow \mathbb{S}^1$ is a $C^2-$map. If $F$ is:

\begin{enumerate}
\item Volume-preserving,
\item Accessible (as a partially hyperbolic endomorphism),
\end{enumerate}
then $F$ is ergodic.
\end{cor}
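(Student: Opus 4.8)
The plan is to obtain the corollary as a direct synthesis of Theorems \ref{accBrin} and \ref{mainD}, so the only real work is to verify that $F$ belongs to the class to which each applies. First I would record that $F$ is a partially hyperbolic endomorphism: from $DF_{(x,\theta)}(v,w) = (Df_x v,\, D\varphi_x(v) + w)$ one sees that the fiber direction $\{0\}\oplus T_\theta\mathbb{S}^1$ is $DF$-invariant and acted on by an isometry, so it serves as $E^c$ with $\dim E^c = 1$; since $f$ is $C^2$-expanding, $Df$ uniformly expands $TM$, and in a suitable adapted metric the corresponding horizontal directions furnish $E^u$, while $E^s$ is trivial. Hence $F$ is (in fact a special) partially hyperbolic endomorphism with one-dimensional center, and by hypothesis it is accessible and $m$-preserving, where $m$ is the normalized product volume on $M\times\mathbb{S}^1$.

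Next I would apply Theorem \ref{accBrin} to $F$: an accessible, $m$-preserving partially hyperbolic endomorphism is topologically transitive, so $F$ is topologically transitive. Finally I would invoke Theorem \ref{mainD}: $F$ is a skew product of the required form $F(x,\theta)=(f(x),\theta+\varphi(x))$ over the $C^2$-expanding base map $f$ with $C^2$ cocycle $\varphi$; it is topologically transitive (just established) and volume-preserving (by hypothesis); therefore $(F,m)$ is ergodic, which is exactly the assertion.

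I do not expect a genuine obstacle here, since the argument is a formal concatenation of two already-proved theorems. The only points deserving care are bookkeeping ones: that "accessible as a partially hyperbolic endomorphism" in the hypothesis is literally the $su$-path accessibility property (phrased via the orbit-space/universal-cover description of $W^u$ from Proposition \ref{pro 6}) used in Theorem \ref{accBrin}, and that the volume referred to in the two hypotheses is the same normalized measure on $M\times\mathbb{S}^1$. Once these identifications are in place, the chain of implications closes immediately and no further analysis is needed.
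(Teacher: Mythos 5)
Your proposal is correct and is exactly the paper's intended argument: the corollary is obtained by combining Theorem \ref{accBrin} (accessibility plus volume preservation gives topological transitivity) with Theorem \ref{mainD} (transitivity plus volume preservation gives ergodicity for these skew products), and your preliminary check that $F$ is a partially hyperbolic endomorphism with one-dimensional center fiber and trivial $E^s$ is the right bookkeeping to make both theorems applicable.
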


We now address stable ergodicity--persistence of ergodicity under small perturbations--for partially hyperbolic endomorphisms. Formally:

\begin{definition} A $C^2-$partially hyperbolic endomorphism $f: M \rightarrow M,$ preserving a normalized volume form $m$ is stably ergodic if there is a $C^1-$open set $\mathcal{U}$ among all $C^2$ and $m-$preserving  partially hyperbolic endomorphism $g: M \rightarrow M,$ such that, every $g \in \mathcal{U}$ is ergodic with respect to $m.$
\end{definition}


\begin{maintheorem}[Ergodicity Criteria for Partially Hyperbolic Endomorphisms]\label{mainC} Let $f: M \rightarrow M$ be a $C^2-$volume preserving partially hyperbolic endomorphism with:
\begin{itemize}
\item $\dim(E^c)=1,$
\item $\dim(E^s) = d_s \geq 1, \dim(E^u) = d_u \geq 1,$
\item $\displaystyle\int_{M} \lambda^c_f(x) dm(x) > 0$ (positive mean central Lyapunov exponent).
\end{itemize}
Then:

\begin{enumerate}
\item If the stable foliation $\mathcal{F}^s_f$ is minimal, then $f$ is ergodic.
\item If $\mathcal{F}^s_f$ is robustly minimal, then $f$ is stably ergodic.
\end{enumerate}
\end{maintheorem}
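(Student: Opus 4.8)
\emph{Proof proposal.} The plan is to run a Hopf--Pesin argument in the spirit of the Rodr\'iguez Hertz--Rodr\'iguez Hertz--Tahzibi--Ures criterion, adapted to the non-invertible setting, exploiting the fact noted above that $E^s$ (hence the strong stable foliation $\mathcal{F}^s_f$) depends only on the base point, so that the topological ``minimality'' input lives on $M$ itself, whereas the center--unstable data must be read off the orbit space. First I would pass to the natural extension $(M^f,\bar f,\bar m)$: ergodicity of $(f,m)$ is equivalent to ergodicity of $(\bar f,\bar m)$, and Oseledets' theorem on $M^f$ makes $\lambda^c$ a well-defined $\bar m$-integrable function with $\int_{M^f}\lambda^c\,d\bar m=\int_M\lambda^c_f\,dm>0$. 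Arguing by contradiction, assume $f$ is not ergodic; decomposing $m$ into ergodic components and using positivity of the integral, one obtains an $f$-invariant Borel set $C\subset M$ with $0<m(C)<1$, $m|_C$ ergodic, and $\lambda^c>0$ a.e.\ on $C$. Set $\widehat C=p^{-1}(C)$, on which the Oseledets unstable direction is $E^c\oplus E^u$, of dimension $d_u+1$.

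Next I would show that $C$ is \emph{bi-saturated} in the appropriate sense. On one side, Pesin theory for endomorphisms provides, for $\bar m|_{\widehat C}$-a.e.\ $\bar x$, a center--unstable Pesin manifold $W^{cu}(\bar x)\subset M$ of dimension $d_u+1$, tangent to $E^c\oplus E^u$ at $x_0$ and containing the strong unstable manifold; since $m$ is an invariant volume its conditionals along these leaves are equivalent to Riemannian volume and the family is absolutely continuous, so $C$ is essentially $cu$-saturated (co-null along Lebesgue-almost every such leaf). On the other side, $E^s$ is orbit-independent, so $\mathcal{F}^s_f$ is a genuine absolutely continuous foliation of $M$ with $C^1$ leaves, and the classical Hopf argument --- stable manifolds are forward-dynamically defined and $m$ is invariant --- shows the $f$-invariant set $C$ is essentially $\mathcal{F}^s_f$-saturated.

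Now I would extract an honestly open piece of $C$. Choosing a $cu$-Pesin leaf $D$ that is co-null in $C$ and using that $TD=E^c\oplus E^u$ is transverse to $E^s$, the local $\mathcal{F}^s_f$-saturate $V=\bigcup_{z\in D}W^s_{\mathrm{loc}}(z)$ is a $C^0$ box of full dimension $d_s+(d_u+1)=\dim M$, hence contains a nonempty open set, and a Fubini argument along $\mathcal{F}^s_f$ (absolute continuity plus essential $s$-saturation of $C$) gives $m(V\setminus C)=0$. Let $W$ be the union of all open sets $O$ with $m(O\setminus C)=0$; then $W$ is open, nonempty, $m(W\setminus C)=0$, and --- because the $\mathcal{F}^s_f$-saturate of an open set is open and the $\mathcal{F}^s_f$-saturate of a null set is null --- $W$ is \emph{genuinely} $\mathcal{F}^s_f$-saturated. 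Hence $M\setminus W$ is closed and $\mathcal{F}^s_f$-saturated, so minimality of $\mathcal{F}^s_f$ forces $M\setminus W=\varnothing$; thus $m(C)\ge m(W)=1$, a contradiction, and (1) follows. For (2), robust minimality gives a $C^1$-neighborhood $\mathcal V$ of $f$ among volume-preserving $C^2$ partially hyperbolic endomorphisms on which $\mathcal{F}^s_g$ stays minimal and the index data are unchanged (persistence of the dominated splitting); writing $\int_M\lambda^c_g\,dm=\int_{M^g}\log\|Dg|_{E^c}\|\,d\bar m_g$ (legitimate because $\dim E^c=1$, by additivity of the one-dimensional cocycle) displays this as a $C^1$-continuous function of $g$, since the orbit space, the center bundle and the natural-extension measure all vary continuously, so shrinking $\mathcal V$ preserves $\int\lambda^c_g\,dm>0$; part (1) applied to each $g\in\mathcal V$ then gives stable ergodicity.

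The step I expect to be the main obstacle is the second one: setting up --- and invoking correctly --- Pesin theory together with the absolute-continuity and Fubini statements for endomorphisms, where the center--unstable Pesin manifolds are submanifolds of $M$ but are parametrized by points of the non-manifold orbit space $M^f$, so that ``essential $cu$-saturation of $C$'' must be phrased via the inverse-limit measure $\bar m$ and its disintegration and then pushed down to $M$. By comparison, the transversality argument producing the open box and the minimality argument on $M$ are soft and should go through essentially verbatim.
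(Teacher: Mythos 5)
Your proposal follows essentially the same route as the paper: Pesin unstable manifolds of dimension $d_u+1$ over the positive-exponent set $\Lambda^c_+(f)=\{\lambda^c_f>0\}$ (which has positive volume by the hypothesis on the mean exponent), absolute continuity of conditional volumes along them, orbit-independence of $E^s$ making $\mathcal{F}^s_f$ a genuine absolutely continuous foliation of $M$, transversality to produce an open box full of $C$ mod zero, and minimality of $\mathcal{F}^s_f$ to promote this to full measure; part (2) in both treatments reduces to $C^1$-continuity of the mean central exponent. Two points need repair. First, your extraction of $C$ is overstated: non-ergodicity does not yield an invariant set with $0<m(C)<1$ on which $m|_C$ is ergodic (the ergodic decomposition of volume need not have atoms), and an arbitrary nontrivial invariant set need not satisfy $\lambda^c>0$ a.e. Since ergodicity of $m|_C$ is never used afterwards, the fix is a two-case reduction: if $m(\Lambda^c_+(f))<1$, run your argument with $C=\Lambda^c_+(f)$ to reach a contradiction; once $m(\Lambda^c_+(f))=1$, every nontrivial invariant set automatically carries $\lambda^c>0$ a.e.\ and your argument applies to it. This is precisely the split the paper makes explicit: it first shows $f|_{\Lambda^c_+(f)}$ is ergodic and then uses minimality together with absolute continuity of $\mathcal{F}^s_f$ to show the complement of $\Lambda^c_+(f)$ is null. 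Second, in part (2) you write the mean central exponent as $\int_{M^g}\log\|Dg|_{E^c}\|\,d\bar m_g$ and assert $C^1$-continuity in $g$; but for endomorphisms $E^c$ is \emph{not} orbit-independent, so this functional lives on the varying orbit space $M^g$ and its continuity is not straightforward. The paper's device is to use
$\int_M\lambda^c_g\,dm=\int_M\log J^{cs}g\,dm-\int_M\log J^{s}g\,dm$,
which involves only the bundles $E^{cs}_g$ and $E^s_g$; these are uniquely defined on $M$ itself and vary continuously with $g$, so the continuity is immediate. You should substitute this for your continuity claim.
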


For diffeomorphisms, Burns, Dolgopyat, and Pesin \cite{BDP} proved analogous results using accessibility and negative central Lyapunov exponents. Their framework does not require
$\dim(E^c) = 1$.

\begin{theorem}[Theorem 3 of \cite{BDP}]
Let $fM\to M$ be a $C^2$ partially hyperbolic diffeomorphism of a
compact smooth Riemannian manifold $M$, preserving a smooth measure $m.$
If $f$ is accessible and has negative central exponents on a set of
positive measure, then $f$ is stably ergodic.
\end{theorem}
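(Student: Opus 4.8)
The plan is to reproduce, in outline, the two–step strategy of Burns--Dolgopyat--Pesin: first show that the hypotheses already force $(f,m)$ to be ergodic, and then show that both hypotheses survive sufficiently small $C^1$-perturbations inside the space of $C^2$, $m$-preserving partially hyperbolic diffeomorphisms, so that the same ergodicity argument applies verbatim to every nearby $g$. Throughout I would use standard non‑uniform hyperbolicity theory (Pesin local manifolds, Pesin blocks, absolute continuity of the Pesin lamination) and the Brin--Pesin absolute continuity of the strong unstable foliation $\cW^u$; partial hyperbolicity itself is $C^1$-open by the corollary following Proposition \ref{pro 6}.

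\emph{Ergodicity of $f$.} Choose $c>0$ and a set $A$ with $m(A)>0$ on which every central Lyapunov exponent is $<-c$; then the cocycle $Df|_{E^{cs}}$ with $E^{cs}=E^s\oplus E^c$ is non‑uniformly contracting over $A$. By Pesin theory, through $m$-a.e. $x\in A$ there is a $C^{1+}$ local disk $V^{cs}(x)$ tangent at $x$ to $E^{cs}_x$, exponentially contracted in forward time, with $W^s_{\mathrm{loc}}(x)\subset V^{cs}(x)$; these disks form a measurable lamination with absolutely continuous holonomies. Using Poincaré recurrence to $A$ and pulling such disks back along orbits, $m$-a.e.\ $x\in M$ carries a (non‑uniformly sized) local $cs$-manifold. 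Since $\dim V^{cs}+\dim\cW^u=\dim M$ and the two families are transverse, a Hopf argument combined with Fubini and the two absolute‑continuity statements shows that for any continuous $\varphi$ the invariant Birkhoff average $\tilde\varphi$ — automatically constant along $\cW^u$-leaves and along the contracting $V^{cs}$-manifolds — is locally $m$-a.e.\ constant; equivalently, each ergodic component of $f$ agrees mod $0$ with an open set and is $\cW^s$- and $\cW^u$-saturated mod $0$. At this point accessibility enters: passing to the essential interior of such a component and checking it is honestly $su$-saturated, one concludes (connectedness of $M$, accessibility) that the only possibilities are the null set and $M$; hence $f$ has a single ergodic component.

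\emph{Persistence and conclusion.} For $g$ $C^1$-close to $f$: $g$ is partially hyperbolic, and the bundles $E^s_g,E^{cs}_g,E^{cu}_g$ (hence $E^c_g=E^{cs}_g\cap E^{cu}_g$) vary $C^0$-continuously with $g$. The hypothesis of negative central exponents on a positive‑measure set is encoded by a $C^1$-robust \emph{finite‑time} contraction estimate on a compact Pesin block $\Lambda_\ell\subset A$ of positive measure — a bound of the form $\|Df^{N}_x|_{E^{cs}_x}\|<\tfrac12$ for $x\in\Lambda_\ell$ — which persists for $g$ and, via a Pliss/Mañé‑type return argument, lets one rebuild for $g$ a non‑uniformly hyperbolic $cs$-structure on a set of positive measure; together with the robustness of the connecting ($su$-path) structure used above, every such $g$ satisfies the hypotheses of the ergodicity step and is therefore ergodic. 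This exhibits a $C^1$-neighbourhood of $f$ consisting of ergodic maps, i.e.\ $f$ is stably ergodic.

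The main obstacle is the ergodicity step, and specifically the upgrade from \emph{a.e.\ leafwise constancy of $\tilde\varphi$} to \emph{ergodic components essentially open and $su$-saturated mod $0$}: the Pesin $cs$-laminations are only measurable and of non‑uniform size, so the Hopf/Fubini argument must be carried out on Pesin blocks and glued carefully using absolute continuity of both the $cs$-lamination and of $\cW^u$. It is precisely here that genuine accessibility — not merely essential accessibility of the $\cW^s,\cW^u$-leaves, which would not suffice — is needed to eliminate the residual measure‑zero ambiguity and collapse all ergodic components to one. A secondary difficulty is making the persistence of both hypotheses under $C^1$-perturbation fully rigorous, since Lyapunov exponents are not continuous and accessibility is not known to be $C^1$-open in complete generality; the robust finite‑time formulation above is the device that circumvents the first of these.
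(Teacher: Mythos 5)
This statement is quoted in the paper verbatim as Theorem~3 of \cite{BDP}, purely as background for Theorem~\ref{mainC}; the paper contains no proof of it, so there is nothing internal to compare your argument against. Measured against the actual Burns--Dolgopyat--Pesin argument, your outline of the \emph{ergodicity} step is essentially right: negative central exponents on a positive-measure set $A$ give Pesin $cs$-manifolds containing $W^s_{\mathrm{loc}}$ and transverse to $\cW^u$, absolute continuity of both families plus the Hopf argument shows every positive-measure ergodic component of $f|_A$ is open mod~$0$ and essentially bi-saturated, and accessibility then forces a single component of full measure (in particular $A=M$ mod~$0$). The subtleties you flag about gluing across Pesin blocks and upgrading ``saturated mod~$0$'' are real, but they are the standard ones and your plan for them is coherent.

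The genuine gap is in the \emph{persistence} step, and you have in fact named it yourself without closing it. First, the device you propose for propagating the exponent hypothesis --- a finite-time bound $\|Df^N_x|_{E^{cs}_x}\|<\tfrac12$ on a compact Pesin block $\Lambda_\ell$ --- does not transfer to $g$ as stated: $\Lambda_\ell$ is defined by the $f$-dynamics, is not $g$-invariant, and a Pliss/Ma\~n\'e return argument for $g$ needs an integrated contraction over a $g$-invariant measure. The standard (and correct) mechanism is to first conclude from the ergodicity step that $f$ has negative central exponents $m$-a.e., so that $\int_M\log\|Df|_{E^c_f}\|\,dm<0$, and then use that this integral varies continuously with the map in the $C^1$ topology (the splitting of a partially hyperbolic diffeomorphism depends continuously on the map); Birkhoff plus Oseledets then yields negative central exponents on a positive-measure set for every nearby $g$. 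This is exactly the trick the present paper uses in display~(\ref{center_exponent}) in its proof of Theorem~\ref{mainC}, and you should adopt it. Second, and more seriously, your argument still needs the perturbed map $g$ to satisfy the connecting hypothesis (accessibility, or at least a.e.\ dense orbits / topological transitivity) in order to collapse the open-mod-$0$ ergodic components of $g$ to a single one; you correctly observe that accessibility is not known to be $C^1$-open in general, but you leave this unresolved, so as written the proposal proves ergodicity of $f$ and of those nearby $g$ that happen to remain accessible, not stable ergodicity. Closing this requires an additional input --- e.g.\ invoking stable accessibility, center bunching with $\dim E^c=1$ (as in \cite{He} and \cite{HHU}), or the specific connectivity argument of \cite{BDP} for the perturbed components --- and without it the second half of the theorem is not established.
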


Establishing ergodicity and stable ergodicity for non-invertible systems (endomorphisms) presents unique challenges, particularly in bridging accessibility to ergodic behavior problem of significant recent interest. In his Ph.D. thesis, A. Tyler  \cite{AuT} advanced this program by proving:

\begin{theorem}[Theorem 1.7 of \cite{AuT}] Let $fM\to M$  be  a $C^2$ volume-preserving, center bunched, dynamically coherent, partially hyperbolic endomorphism on a compact smooth Riemannian manifold $M$. Assume:

\begin{enumerate}
\item $f$ has constant Jacobian,
 \item $f$ is essentially accessible.
\end{enumerate}
Then $f$ is ergodic.
\end{theorem}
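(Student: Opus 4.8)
The natural plan is to transpose the Burns--Wilkinson scheme (accessibility $+$ center bunching $+$ dynamical coherence $\Rightarrow$ ergodicity) to the non-invertible setting by passing to the orbit space $M^f$, using the constant-Jacobian hypothesis to construct and control an invariant lift of $m$.

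First I would pass to the natural extension $\bar f\colon M^f\to M^f$. Since $f$ is volume preserving with constant Jacobian, that Jacobian is forced to equal $\deg(f)$, so every local inverse branch of $f$ contracts $m$ by the same factor; this produces a canonical $\bar f$-invariant Borel probability $\bar m$ on $(M^f,\bar d)$ with $(p\circ\bar f^{\,n})_*\bar m=m$ for all $n$, whose conditional measures along the fibers of $p$ are the uniform Bernoulli measures on the Cantor (inverse-branch) directions. Because $(M,f,m)$ is the factor of $(M^f,\bar f,\bar m)$ under $p$ and $\bar f$ is the natural extension, $(f,m)$ is ergodic iff $(\bar f,\bar m)$ is; so it suffices to prove the latter. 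On $M^f$ the stable lamination $W^s$ descends from $M$, the unstable lamination $W^u$ is now genuinely well defined (it uses the prescribed history), and dynamical coherence read along orbits supplies the center-stable and center-unstable laminations $W^{cs}$, $W^{cu}$ with (by center bunching) Hölder intra-leaf holonomies; accessibility classes of $f$ lift to accessibility classes of $\bar f$, so the essential accessibility hypothesis passes to $\bar f$.

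Next comes the Hopf argument on $M^f$. For any continuous $\phi\colon M^f\to\R$ the forward and backward Birkhoff averages $\phi^{+}$, $\phi^{-}$ under $\bar f$ exist and coincide $\bar m$-a.e., and uniform continuity forces $\phi^{+}$ to be a.e.\ constant on stable leaves and $\phi^{-}$ a.e.\ constant on unstable leaves; hence $\phi^{\pm}$ is measurable and invariant mod zero under both the stable and the unstable holonomies of $\bar m$, i.e.\ essentially bi-saturated. Granting the key dichotomy that, under essential accessibility, every essentially bi-saturated set has $\bar m$-measure $0$ or $1$, we get that $\phi^{\pm}$ is a.e.\ constant for every continuous $\phi$, hence $(\bar f,\bar m)$ is ergodic and we are done.

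The technical heart, and the step I expect to be the main obstacle, is proving this dichotomy on $M^f$, which is not a manifold but locally a disc times a Cantor set. Two ingredients must be re-established in this fibered, non-invertible setting: (i) absolute continuity of the stable and unstable holonomies of $\bar m$ --- the stable case is classical on $M$, while for the unstable case one uses the constant Jacobian to pin down the conditionals of $\bar m$ along $W^u$ (fiberwise these are the SRB-type densities of a uniformly expanding system and the transverse Cantor direction carries the uniform measure), which is exactly what keeps the unstable $u$-Fubini/absolute-continuity estimates valid despite the $\deg(f)$-to-one branching; and (ii) the julienne density-basis machinery of Burns--Wilkinson, where center bunching guarantees the $cs$- and $cu$-holonomies are Hölder with small enough exponent loss that the dynamically defined julienne neighborhoods form a density basis for $\bar m$, so that a bi-essentially-saturated set has full density at a.e.\ point of both itself and its complement --- impossible for a non-trivial set once accessibility links a full-measure set of points. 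Pushing these estimates through the inverse-limit space, and in particular controlling how the non-invertibility interacts with the $u$-Fubini argument (precisely the issue the constant-Jacobian hypothesis is there to tame), is where the genuine work lies; once it is in place, the Hopf argument closes exactly as in the diffeomorphism case.
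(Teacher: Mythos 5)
First, a point of comparison you could not have known: the paper does not prove this statement at all. It is quoted verbatim as Theorem 1.7 of Tyler's Ph.D.\ thesis \cite{AuT} and used only as context for the authors' own (different) ergodicity results. So there is no in-paper argument to measure your proposal against; the relevant benchmark is the cited thesis, whose strategy is indeed the one you describe --- pass to the inverse limit $M^f$, run the Hopf argument there, and re-establish the Burns--Wilkinson julienne machinery in the non-invertible setting, with the constant-Jacobian hypothesis controlling the lifted measure and the unstable absolute-continuity estimates. Your preparatory steps are sound: volume preservation plus constant Jacobian does force $Jf\equiv\deg(f)$, the lift $\bar m$ with uniform conditionals on the fibers of $p$ is the canonical one, and ergodicity of $(f,m)$ is equivalent to ergodicity of $(\bar f,\bar m)$.

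That said, as a proof the proposal has a genuine gap, and you have located it yourself: everything rests on the dichotomy that, under essential accessibility, every essentially bi-saturated subset of $M^f$ has $\bar m$-measure $0$ or $1$, and you explicitly write ``granting the key dichotomy'' and defer items (i) and (ii) --- absolute continuity of the unstable holonomies of $\bar m$ across the $\deg(f)$-to-one branching, and the julienne density-basis estimates on a space that is locally a disc times a Cantor set. That dichotomy \emph{is} the theorem; the Hopf argument wrapped around it is routine. Two further points would need care even in a fuller write-up: the passage from essential accessibility of $f$ on $M$ (where unstable legs may use arbitrary preorbits, as in Definition 4.1 of the paper) to an accessibility statement for $\bar f$ on $M^f$ (where each point carries a fixed history) is not automatic and must be argued; and the claim that the fiber conditionals of $\bar m$ interact harmlessly with the $u$-Fubini step is precisely where the center-bunching and dynamical-coherence hypotheses must be invoked quantitatively, not just named. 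As it stands this is a correct and well-oriented roadmap, but not a proof.
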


\section{Basic Preliminaries of Pesin Theory}

Our analysis leverages Pesin Theory for endomorphisms \cite{QXZ}, focusing on the ergodic decomposition of SRB (Sinai-Ruelle-Bowen) measures. The core strategy is to prove that the ergodic decomposition of the
 $f$-invariant measure
$m$ is trivial (i.e.,
$m$ itself is ergodic).

%
\begin{theorem}[Invariant Measures on Inverse Limits \cite{QXZ}] Let $(M,d)$ be a compact metric space and $f: M \rightarrow M$ a continuous map. For any  $f-$invariant Borel probability measure $\mu$, the exist a unique $\tilde{f}-$invariant Borel probability measure $\tilde{\mu}$ on the inverse limit space $M^f,$ such that: $$\mu(B) = \hat{\mu}(p^{-1}(B)) \text{ for all Borel sets }B\subset M,$$
where $p:M^f\to M$ is the canonical projection
\end{theorem}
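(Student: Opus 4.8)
This is the classical fact that an invariant measure of the base lifts uniquely to an invariant measure of the natural extension (Rokhlin). The plan is to realize $\bar\mu$ as a projective--limit measure on the tower $\cdots\xrightarrow{f}M\xrightarrow{f}M$, and to obtain uniqueness from the fact that cylinder sets generate the Borel $\sigma$-algebra of $M^f$. First I would fix the coordinate projections $p_n\colon M^f\to M$, $p_n(\bar x)=x_n$, so that $p=p_0$, $p_n\circ\bar f=p_{n+1}$, hence $p_n=p_0\circ\bar f^{\,n}$, and, on $M^f$, $p_{n+1}=f\circ p_n$. From this I would record at once the key propagation fact: if $\bar\mu$ is any $\bar f$-invariant probability with $(p_0)_*\bar\mu=\mu$, then for every $n\in\mathbb Z$ one has $(p_n)_*\bar\mu=(p_0)_*(\bar f^{\,n})_*\bar\mu=(p_0)_*\bar\mu=\mu$. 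Thus the normalization condition, stated only for the $0$-th coordinate, in fact pins down every marginal of $\bar\mu$.

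For existence I would invoke the Daniell--Kolmogorov extension theorem on the compact metrizable product space $M^{\mathbb Z}$ (each coordinate space $M$ being compact metric, hence Polish). For a finite window $-m\le k\le n$, declare the law of $(x_{-m},\dots,x_n)$ to be the push-forward of $\mu$ under $x\mapsto\big(x,f(x),\dots,f^{m+n}(x)\big)$. Invariance $f_*\mu=\mu$ is exactly the condition making this family consistent under restriction of windows: the $x_k$-marginal equals $(f^{\,j})_*\mu=\mu$ for the appropriate $j\ge 0$, and deleting the left- or right-most coordinate reproduces the smaller window. So there is a unique Borel probability $\bar\mu$ on $M^{\mathbb Z}$ with these finite-dimensional distributions. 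Since the two-coordinate marginal on $(x_k,x_{k+1})$ is carried by the graph $\{(z,f(z)):z\in M\}$, each closed set $\{\bar x: f(x_k)=x_{k+1}\}$ has full $\bar\mu$-mass, hence so does the countable intersection $M^f$; I would then restrict $\bar\mu$ to $M^f$. That it is $\bar f$-invariant, and that $(p_n)_*\bar\mu=\mu$ — in particular $\bar\mu(p^{-1}(B))=\mu(B)$ for all Borel $B$ — is read off directly from the marginals.

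For uniqueness I would show $\bar\mu$ is determined on cylinders and that these generate the Borel $\sigma$-algebra of the compact metric space $M^f$. Using the relations $x_{k+1}=f(x_k)$, a cylinder $\bigcap_{k=-m}^{n}p_k^{-1}(A_k)$ equals $p_{-m}^{-1}\big(A_{-m}\cap f^{-1}(A_{-m+1})\cap\cdots\cap f^{-(m+n)}(A_n)\big)$, so its $\bar\mu$-measure is forced to be $\mu$ of that single set by the already-established identity $(p_{-m})_*\bar\mu=\mu$. As such cylinders form an algebra generating the Borel sets, the standard uniqueness-of-measures (monotone class) argument forces any two lifts as in the statement to coincide.

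The only genuinely delicate point — and it is mild — is the passage from the Kolmogorov measure on the full product $M^{\mathbb Z}$ to a measure living on the closed subset $M^f$ in the presence of non-invertibility: the ``past'' coordinates encode preimages under $f$, so one must check nothing is lost there. This is handled entirely at the level of finite marginals and uses only $f_*\mu=\mu$, with no surjectivity of $f$ needed; as a sanity check, $\mu$ charges only the set of points admitting a complete backward orbit, since $\mu(f^k(M))=\mu(f^{-k}(f^k(M)))=1$ for every $k$ and $\bigcap_k f^k(M)$ is precisely that set by a compactness (inverse-limit of nonempty compacta) argument. Alternatively, one may bypass Kolmogorov's theorem: define the positive normalized functional $g(x_{-m},\dots,x_n)\mapsto\int_M g\big(x,f(x),\dots,f^{m+n}(x)\big)\,d\mu(x)$ on the algebra of cylinder functions (well defined by the preceding remark), observe that this algebra is uniformly dense in $C(M^f)$ by Stone--Weierstrass, and apply the Riesz representation theorem; $\bar f$-invariance and the projection identity again follow from $f_*\mu=\mu$.
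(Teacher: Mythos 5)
Your argument is correct and complete in all essentials: the paper itself gives no proof of this statement, merely citing the textbook of Qian--Xie--Zhu, and what you write is the standard Rokhlin natural-extension construction (consistency of the finite-dimensional laws from $f_*\mu=\mu$, Kolmogorov/Riesz for existence, full measure of $M^f$ via the graph of $f$, and uniqueness from the fact that on $M^f$ every cylinder reduces to a set of the form $p_{-m}^{-1}(A)$, whose measure is pinned down by $(p_{-m})_*\bar\mu=\mu$). The one phrase worth tightening is ``cylinders form an algebra'': the clean statement is that $\bigcup_{m\ge 0}p_{-m}^{-1}(\mathcal B_M)$ is an increasing union of $\sigma$-algebras, hence an algebra generating the Borel $\sigma$-algebra of $M^f$, after which the monotone class argument applies exactly as you say.
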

%
%
%
%
%
%

The theory of SRB measures for endomorphisms is well-developed \cite{QXZ}. These measures are characterized by absolute continuity of conditional measures on unstable manifolds, generalizing the classical Sinai-Ruelle-Bowen framework to non-invertible systems.

\begin{definition}[Subordinate Partition to $W^u$-manifolds]\label{subpart}
A measurable partition $\eta$ of $M^f$ is subordinate to
$W^u-$manifolds of a system $(f, \mu)$ if for $\hat{\mu}$-a.e. $\tilde{x} \in M^f,$ the atom $\eta(\tilde{x}),$ satisfies:\begin{enumerate}
\item Bijectivity: The projection $p| \eta(\tilde{x}): \eta(\tilde{x})  \rightarrow p(\eta(\tilde{x}))$ is bijective.
\item Embedded Submanifold: There is a $k(\tilde{x})-$dimensional $C^1-$embedded submanifold $W(\tilde{x})\subset W^u(\tilde{x})$ such that:
\begin{itemize}
\item $p(\eta(\tilde{x})) \subset W(\tilde{x}),$
and $p(\eta(\tilde{x}))$ contains an open neighborhood of $x_0 $ in $W(\tilde{x})$ (under the submanifold topology).
\end{itemize}
\end{enumerate}
\end{definition}

Such partitions can be taken increasing, that means $\eta$ refines $\tilde{f}(\eta) .$ Particularly $ p(\eta(\tilde{f}(\tilde{x}))) \subset p(\tilde{f}(\eta(\tilde{x}))) .$

\begin{definition}[SRB Property] Let $f: M \rightarrow M$ be a $C^2-$endomorphism preserving an invariant Borel probability $\nu.$
The measure $\nu$ has SRB property if for every measurable partition $\eta $ of $M^f$ subordinate
to $W^u-$manifolds (as in Definition \ref{subpart}), the following holds: $$p(\hat{\nu}_{\eta{(\tilde{x})}}) \approx m^u_{p(\eta(\tilde{x}))},\text{ for }\hat{\nu}-\text{ a.e. }\tilde{x},$$
where:
\begin{itemize}
\item $\{\hat{\nu}_{\eta{(\tilde{x})}} \}_{\tilde{x} \in M^f}$
is a canonical system of conditional measures of $\hat{\nu}$,
\item $m^u_{p(\eta(\tilde{x}))} $ is the Lebesgue measure on $W(\tilde{x})$, induced by the inherited Riemannian metric.
\end{itemize}
\end{definition}

The Radon-Nikodym derivative relating $\hat{\nu}_{\eta({\tilde{x}})}$ and  $m^u_{p(\eta({\tilde{x}}))}$ is given by:
$$\rho^u_f(y) =
\frac{\Delta^u_f(\tilde{x}, \tilde{y})}{L(\tilde{x})},\,\,\,\, y \in p(\eta({\tilde{x}})),$$
where:
$$ \Delta^u_f(\tilde{x},\tilde{y}) = \displaystyle\prod_{k=1}^{\infty} \frac{J^uf(x_{-k})}{J^uf(y_{-k})}, \,\,\,\tilde{x} = (x_k)_{k \in \mathbb{Z}}, \tilde{y} = (y_k)_{k \in \mathbb{Z}} $$
and
$$L(\tilde{x}) = \int_{\eta(\tilde{x})} \Delta^u_f(\tilde{x}, \tilde{y}) d \hat{m}^u_{\eta({\tilde{x}})}(\tilde{y}).$$

\begin{theorem}[\cite{QZ}] \label{pesin2} Let $f:M\to M$ be a $C^2$-endomorphism  with an invariant Borel probability
measure $\mu$ satisfying $\log(|Jf(x)|) \in L^1(M,\mu).$ Then, the entropy
formula
\begin{equation}\label{PesinU}
h_{\mu}(f) = \displaystyle\int_M \displaystyle\sum \lambda^i(x)^{+}m_i(x) d\mu
\end{equation}
holds if and only if $\mu$ is an  SRB measure.
\end{theorem}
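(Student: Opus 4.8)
The plan is to pass to the inverse limit $(M^f,\tilde f,\hat\mu)$, where $\tilde f$ is a homeomorphism, $h_{\hat\mu}(\tilde f)=h_\mu(f)$, and the Oseledets splitting, the positive Lyapunov exponents $\lambda^i$ with multiplicities $m_i$, and the unstable manifolds $W^u(\tilde x)$ are all defined along orbits $\tilde x\in M^f$. The hypothesis $\log|Jf|\in L^1(M,\mu)$ makes Ruelle's inequality $h_\mu(f)\le\int_M\sum_i\lambda^i(x)^+m_i(x)\,d\mu$ available (it holds for every $C^1$ endomorphism with integrable log-Jacobian), so the whole content of the statement is that equality in this inequality is equivalent to absolute continuity of the conditional measures of $\hat\mu$ on the $W^u$-manifolds, that is, to the SRB property. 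This is the endomorphism analogue of the Ledrappier--Young characterization, and the argument follows \cite{QZ} (see also \cite{QXZ}); I would split it into the two implications below.

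For the ``if'' part, assume $\hat\mu$ is SRB. Fix an increasing measurable partition $\eta$ of $M^f$ subordinate to $W^u$-manifolds in the sense of Definition~\ref{subpart}, chosen so that $\bigvee_{n\ge0}\tilde f^{-n}\eta$ is the partition into points and $\bigwedge_{n\ge0}\tilde f^{n}\eta$ is trivial mod $0$; Rokhlin's formula then gives $h_{\hat\mu}(\tilde f)=H_{\hat\mu}(\eta\mid\tilde f\eta)$. Disintegrating $\hat\mu$ along $\eta$ and using the SRB hypothesis, each conditional measure is carried by $p$ to normalized Lebesgue measure on the unstable plaque $p(\eta(\tilde x))$ with the density $\rho^u_f$ displayed above. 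A change-of-variables computation along unstable plaques, together with the cocycle relation for $\Delta^u_f$ and the bounded-distortion estimate for the unstable Jacobian (this is where the $C^2$ hypothesis enters), telescopes $H_{\hat\mu}(\eta\mid\tilde f\eta)$ into $\int_{M^f}\log J^uf\,d\hat\mu$, and Birkhoff's theorem identifies this integral with $\int_M\sum_i\lambda^i(x)^+m_i(x)\,d\mu$. Together with Ruelle's inequality this yields the entropy formula.

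For the ``only if'' part, assume the entropy formula holds. I would run the Ledrappier--Young scheme on $M^f$: filter the Oseledets unstable subspace $E^u_{\tilde x}=\bigoplus_{\lambda^i>0}E^i_{\tilde x}$ by the partial sums of its fastest subspaces, consider the associated intermediate unstable submanifolds $W^1(\tilde x)\subset\cdots\subset W^u(\tilde x)$, and introduce the entropies $h_i$ of the refinements of $\eta$ along these submanifolds together with the pointwise dimensions $\gamma_i$ of the conditionals of $\hat\mu$ on them. The core identities $h_i-h_{i-1}=\lambda_i\gamma_i$ and the estimates $\gamma_i\le m_i$ yield $h_{\hat\mu}(\tilde f)=\sum_i\lambda_i\gamma_i\le\int_M\sum_i\lambda^i(x)^+m_i(x)\,d\mu$, so equality in the formula forces $\gamma_i=m_i$ for all $i$; a Lebesgue-density and transverse-absolute-continuity argument then upgrades ``full dimension'' to ``absolutely continuous conditionals on $W^u$'', which is exactly the SRB property.

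The main obstacle is non-invertibility: the unstable manifolds, and with them the whole Ledrappier--Young apparatus, live on $M^f$ rather than on $M$, so every disintegration, every use of Fubini along unstable and transverse directions, and every distortion bound for unstable holonomies must be set up for $\hat\mu$ and must be shown compatible with the backward-orbit structure encoded in $M^f$; in particular two orbits through the same base point may carry different unstable subspaces, so the holonomies and their absolute continuity are genuinely objects on the orbit space. The two most delicate points are (i) constructing the increasing subordinate partition with the generating properties required by Rokhlin's formula on $M^f$, which forces simultaneous control of the Pesin (Lyapunov) charts along orbits and of the geometry of the projection $p$, and (ii) establishing transverse absolute continuity of the unstable holonomies in this nonuniformly hyperbolic, non-invertible setting, which requires adapting the Ledrappier--Strelcyn and Pesin bounded-distortion estimates --- available because $f$ is $C^2$ --- to families of local unstable manifolds parametrized by $M^f$.
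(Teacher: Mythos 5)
The paper does not prove this theorem; it is quoted verbatim from the cited reference [QZ] (Qian--Zhu), whose actual proof follows exactly the route you outline: inverse limit construction, Ruelle's inequality, increasing subordinate partitions with Rokhlin's formula for the ``if'' direction, and the Ledrappier--Young dimension/filtration scheme adapted to $M^f$ for the ``only if'' direction. Your strategy is therefore consistent with the source's approach, and correctly flags the genuine difficulties (generating subordinate partitions on $M^f$ and transverse absolute continuity of unstable holonomies in the non-invertible setting).
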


By Pesin's Formula, the measure $m$ is SRB for every $C^2-$endomorphism $f$ preserving $m.$

\subsection{Ergodic Decomposition}  Our strategy to establish ergodicity relies on analyzing the ergodic decomposition of the measure. Specifically, for a continuous dynamical system
$f:M\to M$ and an invariant Borel probability measure
$\mu$, the ergodic decomposition theorem guarantees that
$\mu$ can be disintegrated into ergodic components. This decomposition is essentially unique. If the decomposition consists of a single component (i.e.,
$\mu$ itself), then
$\mu$ is ergodic. Formally, this means that any
$f-$invariant set has either full measure or zero measure under
$\mu$, confirming ergodicity.

\begin{theorem}[Ergodic decomposition] Let $M$ be a complete separable
metric space, $f : M \rightarrow M$ be a measurable transformation, and $\mu$  an
$f-$invariant Borel probability measure. There exist
\begin{itemize}
\item A measurable set $M_0  \subset  M$
with $\mu(M_0) = 1$,
\item  A partition $\mathcal{P}$ of $M_0$ into measurable subsets,
\item A family
$\{\mu_{P} : P \in \mathcal{P}\}$ of probability measures on $M,$
\end{itemize}
satisfying:
 \begin{enumerate}
\item  $\mu_P(P) = 1$ for $\overline{\mu}$-almost every $P \in \mathcal{P},$
\item For every measurable set $E \subset M$, the map $P \mapsto \mu_P(E)$ is measurable,
\item  $\mu_{P}$ is $f-$invariant and ergodic for $\overline{\mu}$-almost every $P \in \mathcal{P},$
\item  $\mu(E) = \int \mu_P(E)\,d\overline{\mu}(P),$ for all measurable  $E  \subset M.$
\end{enumerate}
\end{theorem}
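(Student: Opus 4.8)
The plan is to deduce (1), (2) and (4) from Rokhlin's disintegration theorem applied to a suitable measurable partition, and to extract that partition together with the invariance and ergodicity in (3) from Birkhoff's ergodic theorem --- which is valid for the (possibly non-invertible) measurable map $f$, with a.e.\ time averages equal to conditional expectations on the $\sigma$-algebra of $f$-invariant sets. Concretely, I would first fix a countable family $\{\varphi_k\}_{k\in\mathbb{N}}\subset C_b(M)$ that is measure-determining for Borel probabilities on the Polish space $M$ (for instance the countable $\mathbb{Q}$-subalgebra generated by the truncated distances $x\mapsto\max\{0,1-n\,d(x,p_i)\}$ for $\{p_i\}$ dense in $M$ and $n\in\mathbb{N}$). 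By Birkhoff's theorem there is a Borel set $M_0\subset M$ with $\mu(M_0)=1$ on which every average $\tilde\varphi_k(x):=\lim_{N\to\infty}\tfrac1N\sum_{j=0}^{N-1}\varphi_k(f^jx)$ exists; after shrinking I may take $M_0$ forward invariant with $\tilde\varphi_k\circ f=\tilde\varphi_k$ on $M_0$ for every $k$. Let $\Psi\colon M_0\to\mathbb{R}^{\mathbb{N}}$, $\Psi(x)=(\tilde\varphi_k(x))_k$, let $\mathcal{P}$ be the partition of $M_0$ into the fibres of $\Psi$ (a measurable partition, being the level sets of a measurable map into a Polish space), and let $\overline{\mu}=\Psi_*\mu$. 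Rokhlin's disintegration theorem then produces Borel probabilities $\{\mu_P\}_{P\in\mathcal{P}}$ with $\mu_P$ concentrated on $P$, with $P\mapsto\mu_P(E)$ measurable, and with $\mu(E)=\int\mu_P(E)\,d\overline{\mu}(P)$ for every Borel $E$, which is exactly (1), (2) and (4).

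For the invariance in (3): since $f(M_0)\subset M_0$ and $\tilde\varphi_k\circ f=\tilde\varphi_k$ on $M_0$, the map $f$ sends each atom of $\mathcal{P}$ into itself, so each $f_*\mu_P$ is again concentrated on $P$; as $P\mapsto f_*\mu_P(E)=\mu_P(f^{-1}E)$ is measurable and $\int f_*\mu_P\,d\overline{\mu}=f_*\mu=\mu$, the family $\{f_*\mu_P\}$ is another disintegration of $\mu$ over $\mathcal{P}$, so by essential uniqueness $f_*\mu_P=\mu_P$ for $\overline{\mu}$-a.e.\ $P$. For ergodicity, write $c_k(P)$ for the constant value of $\tilde\varphi_k$ on the atom $P$. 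Applying Birkhoff's theorem to the invariant measure $\mu_P$, the averages $\tfrac1N\sum_{j<N}\varphi_k(f^j\cdot)$ converge $\mu_P$-a.e.\ to a function of $\mu_P$-integral $\int\varphi_k\,d\mu_P$; but $\mu_P$ is carried by $M_0\cap P$, where that limit equals $c_k(P)$, so $\int\varphi_k\,d\mu_P=c_k(P)$ for all $k$ and $\overline{\mu}$-a.e.\ $P$. If some such $\mu_P$ were not ergodic, choose an invariant set $A$ with $0<\mu_P(A)<1$; the normalized restrictions $\nu_1=\mu_P(\,\cdot\mid A)$ and $\nu_2=\mu_P(\,\cdot\mid A^{c})$ are $f$-invariant, mutually singular, and absolutely continuous with respect to $\mu_P$, hence also carried by $M_0\cap P$, so the same Birkhoff argument forces $\int\varphi_k\,d\nu_1=\int\varphi_k\,d\nu_2=c_k(P)=\int\varphi_k\,d\mu_P$ for every $k$; since $\{\varphi_k\}$ is measure-determining this yields $\nu_1=\nu_2=\mu_P$, contradicting $\nu_1\perp\nu_2$. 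Hence $\mu_P$ is ergodic for $\overline{\mu}$-a.e.\ $P$.

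I would treat the existence of the countable measure-determining family and the verification that $\mathcal{P}$ is a Rokhlin-measurable partition (so that disintegration is available over a general Polish base) as routine. The step carrying the real weight is the ergodicity in (3): the crux is the bootstrap that the Birkhoff averages are already $f$-invariant and take a single value $c_k(P)$ on each atom, which simultaneously pins $\mu_P$ down among all $f$-invariant measures through the measure-determining family and rules out a nontrivial invariant decomposition of $\mu_P$. An alternative to the disintegration route is to set $\mu_x=\lim_N\tfrac1N\sum_{j<N}\delta_{f^jx}$ directly, but on a non-compact $M$ this requires an extra uniform-tightness argument over a full-measure set of $x$, which the Rokhlin approach avoids.
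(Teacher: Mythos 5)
Your argument is correct and complete in its essentials: it is the standard proof of the ergodic decomposition theorem (disintegrate $\mu$, via Rokhlin's theorem, over the measurable partition into level sets of the Birkhoff averages of a countable measure-determining family, then use essential uniqueness of the disintegration for invariance and the constancy of the averages on each atom to force ergodicity). Note that the paper does not prove this statement at all --- it is quoted in the preliminaries as a classical background result --- so there is no proof to compare against; your write-up matches the textbook argument (e.g.\ Viana--Oliveira), and the points you flag as routine (the countable measure-determining family on a Polish space, Rokhlin-measurability of the fibre partition of $\Psi$) are indeed routine.
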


Here, $\overline{\mu}$ denotes the projected measure on $\mathcal{P}$ defined by:
\begin{itemize}
\item A subset $\mathcal{C} \subset \mathcal{P}$ is $\overline\mu-$measurable if $\displaystyle\bigcup_{P \in \mathcal{C} }P $ is  measurable in $M.$
\item $\overline{\mu }(\mathcal{C}) = \mu \left(\displaystyle\bigcup_{P \in \mathcal{C} }P\right).$
\end{itemize}

\section{Proof of Theorem \ref{mainA}}
We adapt the proof of  Theorem 5.2 of \cite{JHU} to the partially hyperbolic endomorphism  setting.. 

Key observations:

\begin{enumerate}

\item Commuting rotations: For any fixed $\alpha \in \mathbb{S}^1,$ define the rotation map $$G_{\alpha}:  M \times \mathbb{S}^1 \rightarrow  M \times \mathbb{S}^1, \,\, G_{\alpha}(x, \theta) = (x, \theta + \alpha).$$
This satisfies the commutation relation:
$$G_{\alpha} \circ F_{\varphi} =  F_{\varphi} \circ G_{\alpha}.  $$

\item Invariance of stable foliations: The commutation above implies that the stable foliation $\mathcal{W}^s_{F_{\varphi}}$ is invariant under $G_{\alpha}$
\begin{equation} G_{\alpha}(\mathcal{W}^s_{F_{\varphi}}) = \mathcal{W}^s_{F_{\varphi}}\label{eq1}.
\end{equation}

\item Projection to the base: Let $p_1:  M \times \mathbb{S}^1 \rightarrow  M$ denote the projection onto the first coordinate.  Then
\begin{equation} p_1(\mathcal{W}^s_{F_{\varphi}}(x, \theta)) = \mathcal{W}^s_{f}(x)\label{eq2}.
\end{equation}
\end{enumerate}

Assume the stable foliation  $\mathcal{W}^s_{F_{\varphi}}$ is not minimal. Then there exists a stable leaf $W$ such that $\overline{W} \varsubsetneqq M \times\mathbb{S}^1 .$
Let $K \subset \overline{W}$ be a minimal set with the following properties:
\begin{enumerate}
\item $K \neq \emptyset,$
\item $K $ is closed,
\item $K$ is $s-$saturated (i.e., contains full stable leaves through any of its points).
\end{enumerate}

  By minimality, for any $(x,\theta) \in K$, the closure of its stable leaf satisfies $\overline{\mathcal{W}^s_{F_{\varphi}}(x,\theta)} = K.$

Note that $p_1(K) \supset p_1(\mathcal{W}^s_{F_{\varphi}}(x, \theta)) = \mathcal{W}^s_f(x), \,x \in M.$ Since $p_1(K)$ is compact and dense in $M,$ so $p_1(K) = M.$ We conclude that $K$ intersects every vertical fiber $\{y\} \times \mathbb{S}^1.$

\textbf{Claim 1:} The set $K$ intersects every vertical circle $\{y\} \times \mathbb{S}^1$ finitely many times.

Suppose there exists $y_0\in M$ such that  $K\cap (\{y_0\} \times \mathbb{S}^1) $ contains infinitely many points. Let $S_0 = \{y_0\} \times \mathbb{S}^1$.
So for any $\varepsilon > 0$, there exist $\theta_1, \theta _2\in \mathbb{S}^1$ with $ 0 < \theta_2 - \theta_1 < \varepsilon$ such that $(y_0, \theta_1),\, y_0, \theta_2)\in K \cap S_0$. Let $\alpha = \theta_2 - \theta_1.$

Consider the vertical rotation $G_{\alpha}(x, \theta)=(x, \theta+\alpha$. Then $$G_{\alpha} (y_0, \theta_1) = (y_0, \theta_2)\in K.$$
Since $K$ is $s-$saturated, closed, and minimal, $G_{\alpha}(K) \cap K \neq \emptyset$ implies $ G_{\alpha}(K ) = K .$ Similarly $ G_{-\alpha}(K ) = K .$

The orbit  $\{G_{n\alpha} (y_0 , \theta_1)\}_{ n \in \mathbb{Z}}$ is $\varepsilon -$dense in $S_0.$  Because $\varepsilon>0$ is arbitrary, $K\cap \mathbb{S}_0$ is dense in $\mathbb{S}_0$.

 By the property $(\ref{eq2})$ and minimality of $\mathcal{W}^s_f,$ we get that $K$ is $\varepsilon -$dense in $M\times \mathbb{S}^1,$ for arbitrary $\varepsilon > 0.$ Hence $K = M \times \mathbb{S}^1$.  This contradicts the assumption $\overline{W} \subsetneq M \times \mathbb{S}^1.$ Therefore, $K$ intersects every vertical circle finitely many times.


\textbf{Claim 2:} The function $\psi: M \rightarrow \mathbb{N},$  defined by $\psi(y) = \#(K \cap \{y\}\times \mathbb{S}^1),$ is upper semi-continuous.

Assume $\psi$ is not upper semi-continuous.
Then there exist a point $y\in M$ and a sequence $\{y_n\}\subset M$ such that:
$$y_n\to y \,\,\,\,\text{ and }\,\,\,\,\, \limsup_{n\to\infty} \psi(y_n)=\eta>\psi(y).$$
Since $\psi$ takes valus in $\mathbb{N}$, $\eta\geq \psi(y)+1.$

  Since $K$ is compact, limit points of $K \cap \{y_n\}\times \mathbb{S}^1$ are points of $K \cap \{y\}\times \mathbb{S}^1.$
So, taking a subsequence if necessary,  there are $(y_n, \theta_{i,n}), i =1,2 \in K \cap \{y_n\}\times \mathbb{S}^1, $ such that $|\theta_{1,n} - \theta_{2,n}| \rightarrow 0,$ when $n \rightarrow +\infty.$

As in Claim 1, it leads us to a contradiction.

\textbf{Claim 3:} The function $\psi$ is constant.

Since $\psi$ is upper semi-continuous and $M$  is compact, $\psi$ achieve a maximum value $h.$
Let $y_0$ be a point such that $\psi(y_0) = h.$ We claim that the angle between consecutive points in $K \cap \{y_0\} \times \mathbb{S}^1$ is constant.  Indeed, call $\alpha_0$ the minimum angle between two consecutive points. Then, if there are two consecutive points with angle greater than $\alpha_0$, by composing with $G_{\alpha_0}$ we will get a new point and $\psi(y_0) > h.$ Then the angle between two consecutive points in $K \cap \{y_0\} \times \mathbb{S}^1$  is constantly equal to  $\alpha_0$. 

By the property $(\ref{eq2})$ and $G_{\alpha_0}(K) = K,$ we conclude that $\psi(y) \geq h,$ for all $y\in M$.  Since $h$ is the maximum,  we get $\psi(y) = h$ for all $y\in M$. This proves Claim 3.
\medskip

As a consequence we get that  $(K, p_1) $ is an $h-$fold covering of $M$. Then
$$\mathcal{K} = \displaystyle\bigcup_{\alpha \in \mathbb{S}^1} G_{\alpha}(K)$$
is  a $C^0-$foliation  of $M\times \mathbb{S}^1.$

\textbf{Claim 4:} For every $(x, \theta) \in M \times \mathbb{S}^1,$ all unstable leaves through $(x,\theta)$ intersect a unique leaf of $\mathcal{K}.$

Assume Claim 4 is false. Lift the system to  the universal cover of $M \times \mathbb{S}^1,$ and let $\overline{\mathcal{K}}$ and  $F$ denote the lifts of $\mathcal{K}$ and  $F_{\varphi}$, respectively.  In $\widetilde{M} \times \mathbb{R}$,  there exist distinct leaves $K_1, K_2\subset \overline{\mathcal{K}}$ connected by an unstable segment $[a,b]^u$ with $a \in K_1$ and $b \in K_2.$ Since $F$ acts isometrically along  the  $c-$coordinate, we have:

$$dist_c(F^n (K_1), F^n(K_2)) \nrightarrow 0,\,\,\,\,\,\,\, \text{ as } n \rightarrow \pm \infty,$$
where $dist_c(K_1, K_2) = min\{dist(k_1, k_2)| p_1(k_1) = p_1(k_2)\}.$

\begin{figure}[H]
\centerline{
\includegraphics[width=12cm]{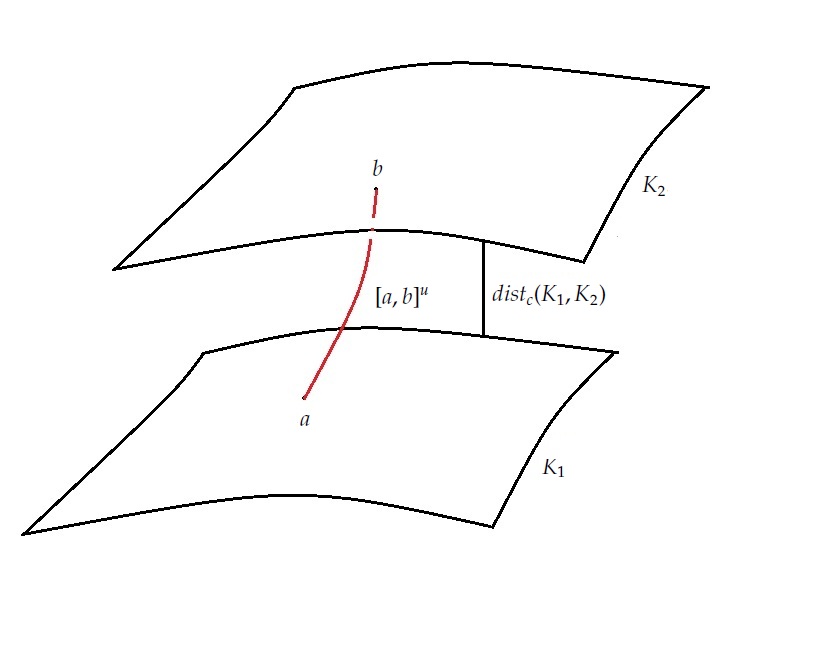}}
\caption{{\small The leaves $K_1$ and $K_2$ connected by an unstable leaf}.}
\end{figure}

However, the unstable segment $[a,b]^u$ contracts under backward iteration:
$$dist(F^{-n}(a),F^{-n}(b)) \rightarrow 0, \,\,\,\,\,\,\, \text{ as }  n \rightarrow +\infty.$$
This implies
\begin{equation}\label{nzero}
dist_c(F^n (K_1), F^n(K_2)) \rightarrow 0, \,\,\,\,\,\,\, \text{ as } n \rightarrow  +\infty,
\end{equation}
contradicting the earlier assertion. Thus, Claim 4 holds.

\section{Some Consequences of Theorem \ref{mainA}}

For subsequent results, we utilize accessibility--a central concept in partially hyperbolic dynamics. In diffeomorphisms, accessibility is an important tool to generalize the Hopf argument to prove ergodicity. For endomorphisms, this concept was rigorously developed in  \cite{He}.

\begin{definition}\label{accdef}
A partially hyperbolic endomorphism   $f: M \rightarrow M$  is  accessible if for any pair of points $x,y \in M$,  there exists a  piecewise $C^1-$path $\gamma_1 \ast \gamma_2 \ast \ldots \ast \gamma_n$ connecting $x,y,$ such that:
\begin{enumerate}
 \item Each path $\gamma_j:[0,1] \rightarrow M$, $(j = 1, \ldots, n)$ satisfies:
 \begin{itemize}
 \item $\gamma_j([0,1]) \subset  W^s_f(z_{j-1})$, or
 \item $\gamma_j([0,1]) \subset  W^u_f(\tilde{z}),$ where $\tilde z$ is a lift of $z_{j-1}$ (i.e., $p(\tilde{z})= z_{j-1}).$
 \end{itemize}
 \item The path connects  $x = z_0, \ldots , z_n = y$ with $\gamma_j(0) = z_{j-1}$ and $\gamma_j(1) = z_j$.
 \end{enumerate}

\end{definition}

\begin{figure}[H]
\centerline{
\includegraphics[width=12cm]{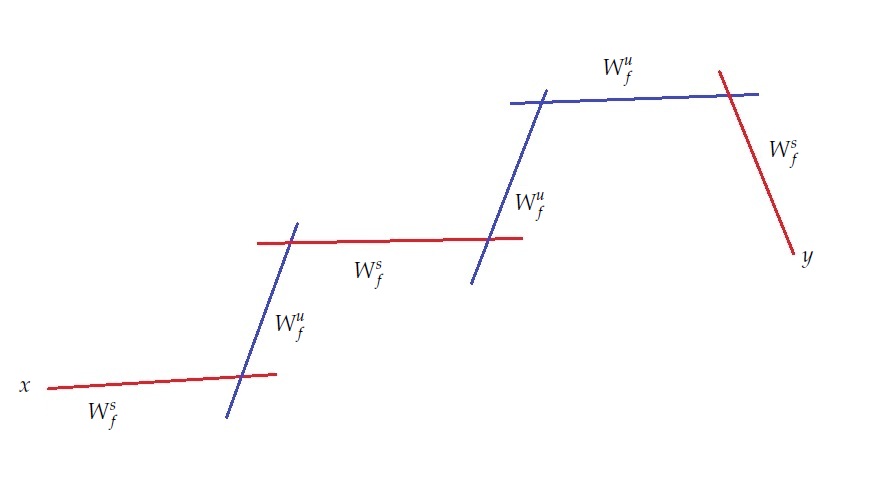}}
\caption{{\small A $su-$path connecting $x$ and $y$}.}
\end{figure}

Under the assumptions of Theorem $A,$ the non-integrability of $E^s \oplus E^u$ ensures accessibility, as a consequence of the geometric structure  of unstable and stable manifolds.

\begin{theorem} Let $F_{\varphi}$ be as in Theorem A. If $E^s \oplus E^u$ is not integrable, then $F_{\varphi}$ is accessible.
\end{theorem}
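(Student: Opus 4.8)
The plan is to leverage the structure established in the proof of Theorem~\ref{mainA}: the non-integrability of $E^s\oplus E^u$ forces the extended stable foliation $\mathcal{W}^s_{F_\varphi}$ to be minimal (this is the contrapositive of Theorem~\ref{mainA}, using that $\mathcal{W}^s_f$ is minimal on $M$, which holds in the toral case by \cite[Theorem B]{HH21} and is part of the hypotheses inherited from Theorem~A). Minimality of the extended stable foliation is a strong connectivity property, and from it I would build $su$-paths between arbitrary points. Concretely, fix $x,y\in M\times\mathbb{S}^1$. The goal is to produce a piecewise $C^1$ path made of stable and unstable legs (in the sense of Definition~\ref{accdef}, working with lifts for the unstable legs) from $x$ to $y$.

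The key steps, in order: First, I would observe that the accessibility class $AC(x)$ of a point $x$ — the set of all points reachable from $x$ by $su$-paths — is both $s$-saturated and $u$-saturated, hence in particular it contains the full stable leaf $\mathcal{W}^s_{F_\varphi}(x)$ and (lifting as needed) unstable leaves through its points. Second, since $\mathcal{W}^s_{F_\varphi}$ is minimal, $\overline{\mathcal{W}^s_{F_\varphi}(x)} = M\times\mathbb{S}^1$, so $AC(x)$ is dense. Third — and this is where the non-integrability is essential rather than just minimality — I would show $AC(x)$ is open: given a point $z\in AC(x)$, the non-integrability of $E^s\oplus E^u$ means that the stable and unstable manifolds through nearby points span a neighborhood of $z$, so one can reach an open set around $z$ by short $su$-paths (this is the standard argument that accessibility classes are open when $E^s\oplus E^u$ is non-integrable, adapted to the endomorphism setting via the universal-cover picture of Proposition~\ref{pro 6}). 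An open, dense, $su$-saturated set whose complement is also $su$-saturated and open forces $AC(x) = M\times\mathbb{S}^1$ by connectedness, giving accessibility.

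An alternative, more hands-on route that stays closer to the proof of Theorem~\ref{mainA}: one can use the foliation $\mathcal{K} = \bigcup_{\alpha}G_\alpha(K)$ directly. If $E^s\oplus E^u$ were integrable, Theorem~\ref{mainA} gives the dichotomy its contrapositive; when it is \emph{not} integrable, $\mathcal{W}^s_{F_\varphi}$ must be minimal, and then the $s$-saturated minimal set $K$ from that proof equals all of $M\times\mathbb{S}^1$. Combining density of stable leaves with the transverse spanning of unstable leaves (Claim~4 of the previous section shows how unstable leaves interact with the $\mathcal{K}$ leaves) lets one interpolate: travel along a stable leaf until close to the target fiber, then jump along an unstable leaf, then correct along a stable leaf, controlling the $\mathbb{S}^1$-coordinate by the fact that $F_\varphi$ acts by isometries (rotations) in the fiber.

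The main obstacle I anticipate is proving openness of accessibility classes in the non-invertible setting: unstable manifolds are not unique in $M\times\mathbb{S}^1$ (they depend on the chosen prehistory $\bar x\in M^{F_\varphi}$), so the usual local-product/transversality argument must be run on the universal cover $\widetilde M\times\mathbb{R}$, where $\widetilde{F_\varphi}$ is a genuine partially hyperbolic diffeomorphism by Proposition~\ref{pro 6}, and then pushed back down. One must check that the $su$-paths constructed upstairs project to admissible $su$-paths downstairs in the sense of Definition~\ref{accdef} — in particular that the unstable legs correspond to legitimate lifts $\tilde z$ with $p(\tilde z)=z_{j-1}$. Once this bookkeeping between $M\times\mathbb{S}^1$, its orbit space, and the universal cover is set up carefully, the rest is the classical connectedness argument.
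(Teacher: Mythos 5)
Your overall strategy coincides with the paper's: both deduce minimality of $\mathcal{W}^s_{F_\varphi}$ from the contrapositive of Theorem \ref{mainA} (using hypothesis (1) of that theorem, which holds for instance in the toral case by [HH21]), and then run a Brin-type argument combining that minimality with the non-integrability of $E^s\oplus E^u$. The paper phrases this as an explicit construction of $su$-paths, split into two geometric configurations; you phrase it via accessibility classes. Your formulation is cleaner, but it contains one step that does not hold as stated.

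The gap is your third step: ``given $z\in AC(x)$, the non-integrability of $E^s\oplus E^u$ means that the stable and unstable manifolds through nearby points span a neighborhood of $z$.'' Non-integrability is the global statement that no foliation is tangent to $E^s\oplus E^u$; it does not give local spanning at every point of every class. The paper's proof is organized precisely around this issue: in its Configuration 2, $E^s\oplus E^u$ \emph{is} integrable on the universal cover $\widetilde M\times\mathbb{R}$, so every short $su$-quadrilateral closes up inside a local integral leaf and no short $su$-path escapes a codimension-one submanifold, yet the splitting is non-integrable downstairs for global (monodromy) reasons. In that configuration your openness claim fails at every point, and one must argue differently (the paper lets two projected $su$-leaves, dense by minimality of $\mathcal{W}^s_{F_\varphi}$, approach each other's ``walls'' and forces an intersection by continuity). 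The standard repair is weaker than what you claim and suffices: you only need \emph{one} accessibility class with nonempty interior; since minimality makes every class contain a dense stable leaf, every class meets that open set and therefore coincides with it. So drop the clopen/connectedness framing. But producing even that single open class in Configuration 2 still requires the global argument, which your proposal does not supply (the paper supplies it only in sketch form as well). Your remarks about running the transversality argument on the universal cover via Proposition \ref{pro 6} and checking that the unstable legs project to admissible legs in the sense of Definition \ref{accdef} are correct and address the right bookkeeping concern for the non-invertible setting.
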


\begin{proof}

We analyze two geometric configurations arising from the non-integrability of $E^s \oplus E^u,$ adapting Brin's quadrilateral argument  \cite{Br}.

\noindent\textbf{Configuration 1 (Non-integrable in the universal cover):} \newline If $E^s \oplus E^u$ is not integrable in the universal cover, Brin's quadrilateral configuration applies (Fig. \ref{quadrilatero}). By the minimality of $\mathcal{W}^s_{F_{\varphi}}$, stable and and unstable paths connect any two points, ensuring accessibility.

\begin{figure}[H]
\centerline{
\includegraphics[width=13cm]{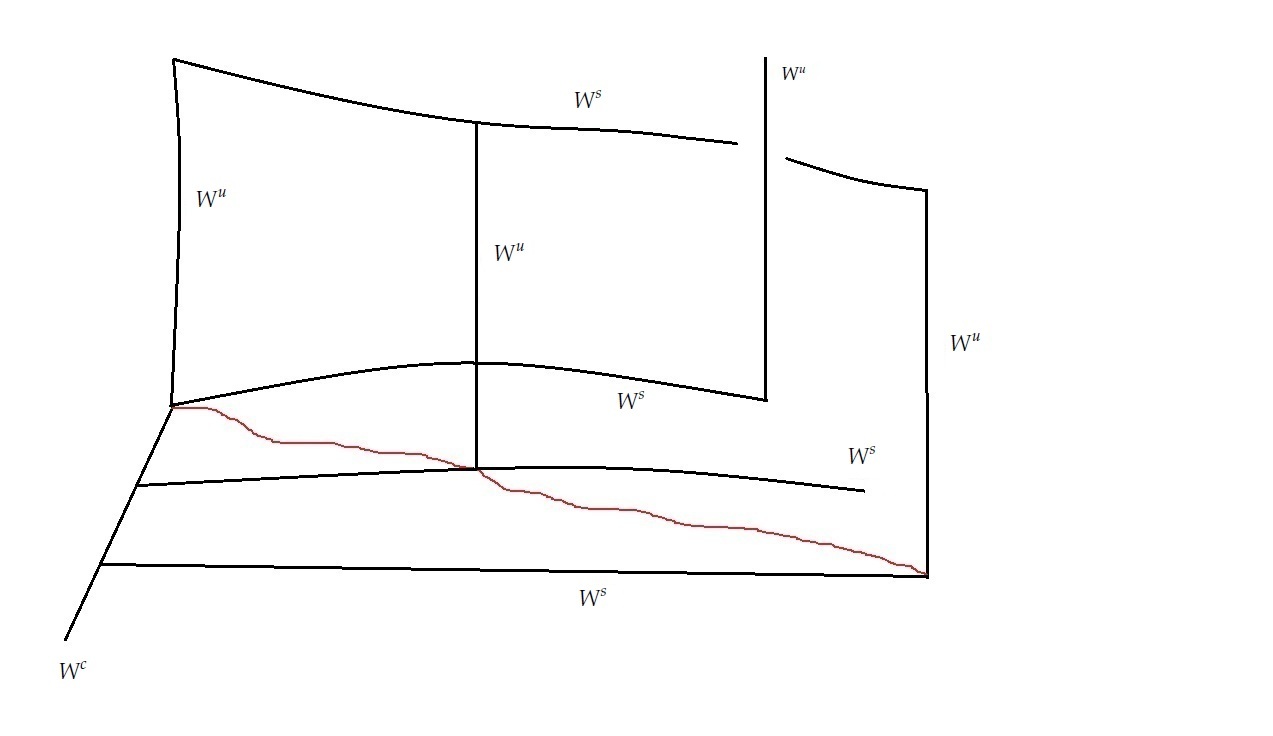}}
\caption{{\small Situation 1: Brin's quadrilateral configuration}.}
\label{quadrilatero}
\end{figure}

\noindent\textbf{Configuration 2 (Non-integrable in $\mathbb{T}^2 \times \mathbb{S}^1$) :} \newline If $E^s \oplus E^u$ is integrable in the universal cover but  not  in $\mathbb{T}^2 \times \mathbb{S}^1,$  consider two leaves $\mathcal{W}^s_{F_{\varphi}}(x)$ and $\mathcal{W}^s_{F_{\varphi}}(y).$ By minimality of $\mathcal W^s_{F_\varphi}$,  these  leaves approach the ``walls'' of the $su-$leaves of the figure $(\ref{twoside})$. Selecting $su-$leaves from $\mathcal{W}^s_{F_{\varphi}}(x)$ and $\mathcal{W}^s_{F_{\varphi}}(y),$ near these walls,  continuity ensures intersection, establishing accessibility.
\begin{figure}[H]
\centerline{
\includegraphics[width=13cm]{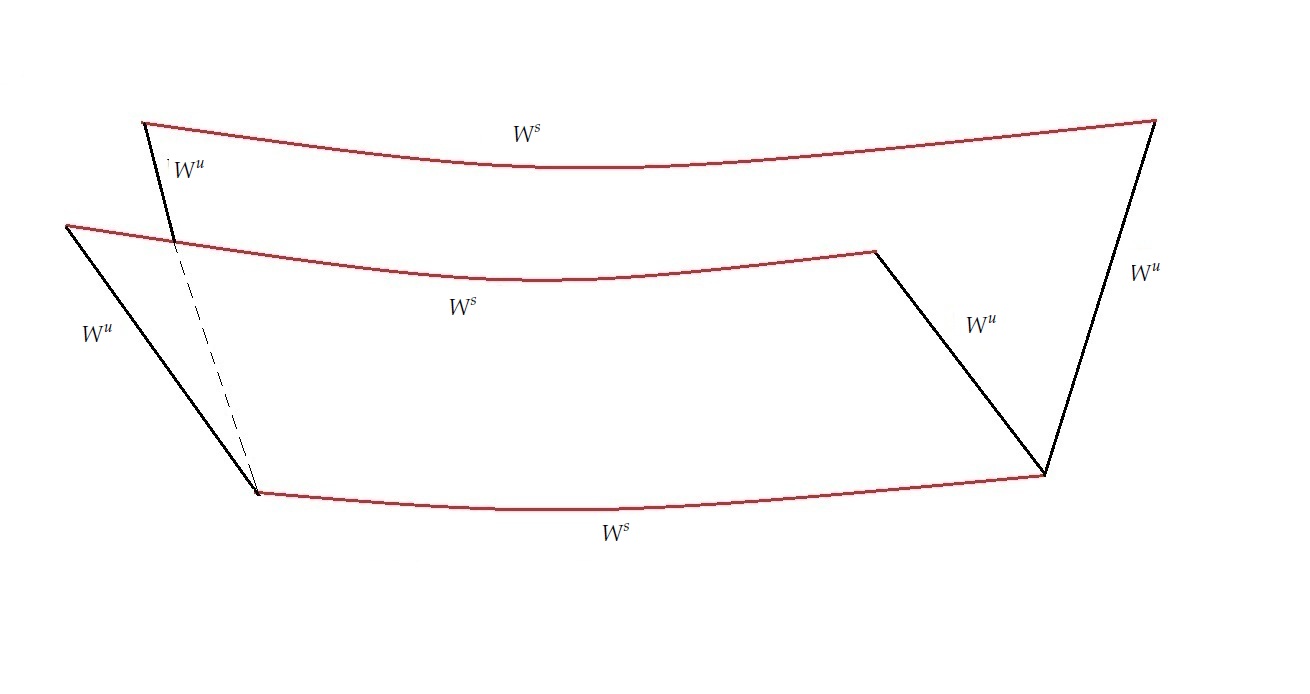}}
\caption{{\small Situation 2: Two $su-$leaves from the same stable leaf}.}
\label{twoside}
\end{figure}

\end{proof}

\section{Proof of Theorem \ref{accBrin}}

The proof here is an adaptation of the argument in \cite{Br}.

\begin{proof}
The proof adapts Brin's argument \cite{Br} to partially hyperbolic endomorphisms, leveraging accessibility and volume preservation.

Since $f$ is volume-preserving, The Poincar\'e Recurrence Theorem implies $\Omega(f) = M$ and $m(Rec(f)) = 1,$ wher  $Rec(f)$ denotes the set of recurrent points $x \in M$ (i.e.,  $x \in \omega (f)$).

To prove topological transitivity, we show that for any  $x, y \in M$ and  $\varepsilon > 0,$ there exists $k = k(x,y, \varepsilon)$ such that $f^k(B_{\varepsilon} (x)) \cap B_{\varepsilon}(y) \neq \emptyset. $ Since $f$ is accessible, there is a $C^0-$path connecting $x$ and $y,$ composed of  stable/unstable segments.

Theorem \ref{accBrin} is a consequence of the two following  lemmas.

\begin{lemma}\label{accs}
Let $x = z_0, \ldots, z_n = y$ be as in Definition \ref{accdef} and let $z_{i+1} \in W^s(z_i)$. If  every neighborhood $z_i\in U$ intersects $f^k(B_{\varepsilon} (x)) $ for some $k\geq 0$, then   $f^{\hat{k}}(B_{\varepsilon} (x)) \cap B_{\varepsilon} (z_{i+1}) \neq \emptyset$ for some $\hat{k} \geq 0.$
\end{lemma}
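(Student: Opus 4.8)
\textbf{Plan of proof for Lemma \ref{accs}.}
The plan is to exploit the contraction along stable manifolds together with volume preservation to ``transport'' the recurrence of $f^k(B_\eps(x))$ near $z_i$ to recurrence near $z_{i+1}$. First I would fix the stable segment $\gamma$ joining $z_i$ to $z_{i+1}$ inside $W^s_f(z_i)$, of some finite length $L$. Because $f$ is partially hyperbolic, points on a common stable leaf are contracted uniformly: there is $N_0$ such that for all $n\geq N_0$, $f^n$ maps $\gamma$ into the $\eps/4$-ball around $f^n(z_i)$. Now since $z_i$ is in the interior of a set meeting $f^k(B_\eps(x))$ for some $k$, and $m$ is preserved, I would use the Poincar\'e Recurrence Theorem to find an iterate bringing a piece of the forward image back close to $z_i$; more precisely, there is a point $w\in B_\eps(x)$ and $k\geq 0$ with $f^k(w)$ as close as we like to $z_i$, and we can arrange in addition (shrinking the neighborhood $U$ and using that $z_i$ is recurrent) that $f^{k+n}(z_i)$ returns to within $\eps/4$ of $z_i$ for a suitable large $n\geq N_0$.

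The key geometric step is then the following: take the local stable manifold $W^s_{loc}(f^k(w))$; by continuity of stable manifolds and since $f^k(w)$ is close to $z_i$, this local stable manifold shadows a definite-size piece of $W^s_f(z_i)$, in particular a piece that, after pushing forward by $f^n$, contains the stable segment from $f^{k+n}(z_i)$ to $f^{k+n}(z_{i+1})$-up to the uniform contraction that makes that whole segment tiny. Concretely: $f^{k+n}(w)$ lies on $f^n(W^s_{loc}(f^k(w)))$, which is within $\eps/4$ of $f^{k+n}(z_i)$, which in turn is within $\eps/4$ of $z_i$, which is within $\eps/4$ of $z_{i+1}$ along $\gamma$ once we also contract $\gamma$; assembling these estimates, $f^{\hat k}(w)\in B_\eps(z_{i+1})$ with $\hat k = k+n$. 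Since $w\in B_\eps(x)$, this gives $f^{\hat k}(B_\eps(x))\cap B_\eps(z_{i+1})\neq\emptyset$.

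More carefully, I would organize the estimates as follows. Choose $n\geq N_0$ large enough that $\diam\bigl(f^n(\gamma)\bigr)<\eps/3$. Using that $z_i$ is recurrent (density of $Rec(f)$ and the fact that the conclusion only needs one suitable $i$, or directly by Poincar\'e recurrence applied to a small ball around $z_i$), pick this $n$ so that in addition $d\bigl(f^n(z_i),z_i\bigr)<\eps/3$ is false in general-so instead I would not insist $f^n(z_i)$ returns near $z_i$, but rather argue purely locally near $z_i$: let $U$ be a neighborhood of $z_i$ so small that $U$ together with the finitely many forward iterates $f,\dots,f^n$ all have controlled distortion and $f^j(U\cap W^s_{loc}(z_i))$ stays within the domain where stable manifolds vary continuously. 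By hypothesis $U$ meets $f^k(B_\eps(x))$, so pick $w\in B_\eps(x)$ with $f^k(w)\in U$. Then $W^s_{loc}(f^k(w))$ is $C^1$-close to $W^s_{loc}(z_i)$, hence contains a point $w'$ with $p\text{-}$projection to $W^s_f(z_i)$ near the start of $\gamma$; following $\gamma$ inside the stable leaf and then iterating forward $n$ times contracts everything: $f^{k+n}(w)$ and $f^{k+n}$ of the endpoint of $\gamma$ (which is $f^{k+n}(z_{i+1})$ up to the shadowing error) are within $\eps/3$ of each other, and within $\eps/3$ of $f^{k+n}(z_i)$. The one remaining point is to also keep $f^{k+n}(z_i)$ within $\eps/3$ of $z_{i+1}$: this is where I use the hypothesis structure-the lemma is applied inductively along the accessibility chain, and at each stage one only needs $z_{i+1}\in W^s(z_i)$ with the segment short; if it is not short, subdivide $\gamma$ into finitely many short subsegments and apply the argument to each in turn, each time upgrading ``$U$ meets some $f^k(B_\eps(x))$'' from the previous subsegment's conclusion.

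\textbf{Main obstacle.} The delicate point is the uniformity of the stable-manifold contraction and the continuity of the local stable manifolds along a possibly long stable segment $\gamma$: a priori $\gamma$ may leave any fixed-size chart, so the clean statement ``$f^n$ contracts $\gamma$ into an $\eps/4$-ball'' needs the absolute partial hyperbolicity (uniform $\nu<1$) to hold along the whole leaf, and the shadowing of $W^s_f(z_i)$ by $W^s_{loc}(f^k(w))$ must be propagated under iteration. I expect to handle this exactly as in Brin \cite{Br}: subdivide the stable (and likewise the unstable) segments of the accessibility path into uniformly short pieces, prove the one-step statement above for a short piece, and chain the conclusions. The unstable case is dual and is the content of the next lemma, using that $f^{-1}$ contracts unstable manifolds along orbits in $M^f$; there the only extra care is that unstable manifolds live on the orbit space $M^f$, so one works with a lift $\tilde z$ as in Definition \ref{accdef} and projects down by $p$.
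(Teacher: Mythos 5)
Your proposal correctly isolates the forward contraction of the stable segment $\gamma$ and the $C^1$-continuity of local stable manifolds, but it has a genuine gap at exactly the point you flag and then set aside: after iterating forward $n$ times to contract $\gamma$, everything you control sits near $f^{k+n}(z_i)$ (equivalently near $f^{k+n}(z_{i+1})$), which is an uncontrolled location in $M$; nothing in your argument brings the picture back to $z_{i+1}$ itself. Neither of your two proposed fixes supplies such a mechanism. Subdividing $\gamma$ into short pieces only addresses the uniformity of the shadowing (your stated ``main obstacle''), not the return problem: for each subsegment the same difficulty recurs, and the trivial observation that a short subsegment keeps its endpoint inside $B_\eps(z_i)$ does not let you chain, because the next application of the lemma needs ``every neighborhood of the new basepoint is hit,'' not merely ``$B_\eps$ of it is hit.'' Recurrence of $z_i$ (which in any case you cannot assume, since $z_i$ is an arbitrary corner of the $su$-path) is also the wrong recurrence: if $f^{n_j}(z_i)\to z_i$ then $f^{n_j}(z_{i+1})\to z_i$ as well, so you land near $z_i$, not near $z_{i+1}$.

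What is actually needed is a mechanism forcing the open set $W=\bigcup_{k\ge 0}f^k(B_{\eps}(x))$ to approach $z_{i+1}$. The paper disposes of this lemma by citing Lemma 2 of \cite{Br}, whose content is precisely this return step; one way to carry it out with only forward iterates is the following. Volume preservation gives $m(f^{-n}(W)\setminus W)=0$ for all $n\ge 0$ (since $f(W)\subset W$ implies $W\subset f^{-1}(W)$ while $m(f^{-1}(W))=m(W)$), and Poincar\'e recurrence makes $m$-almost every point of $W$ recurrent. By hypothesis you may choose recurrent points $w_j\in W$ with $w_j\to z_i$; continuity of the stable lamination on compact plaques yields $w_j'\in W^s(w_j)$ with $w_j'\to z_{i+1}$. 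Pick $r_j$ with $B(w_j,2r_j)\subset W$, then $n$ with $f^n(w_j)\in B(w_j,r_j)$ (recurrence) and $d(f^n(w_j),f^n(w_j'))<r_j$ (stable contraction); then $f^n(w_j')\in W$, so $w_j'\in f^{-n}(W)$, and since $f^{-n}(W)$ is open and coincides with $W$ up to a null set, $W$ meets every ball around $w_j'$, hence meets $B_\eps(z_{i+1})$ for $j$ large. This combination of recurrence of a point \emph{near $z_i$ and inside $W$} with the essential backward invariance of $W$ is the ingredient your proposal is missing.
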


The proof mirrors Lemma 2 of \cite{Br}, relying  on forward iterations  and the stable foliation's uniformity (well-defined for all $x\in M$, independent of  $\tilde{x} \in M^f $).

The key challenge lies in handling unstable leaves, as partially hyperbolic endomorphisms do not necessarily admit a well-defined unstable foliation on $M$. Unlike stable leaves, which are uniquely determined at each
$x\in M$, unstable leaves depend on the choice of preorbit
$\tilde x\in M^f$ (with
$p(\tilde x)=x$). To address this, we project unstable leaves from the universal cover
$\widetilde{M}$, where the lifted dynamics
$\tilde f$ behaves as a diffeomorphism, ensuring locally consistent unstable manifolds.

\begin{lemma}\label{accu}
Let $x = z_0, \ldots, z_n = y$ be as in Definition \ref{accdef}. Suppose that $z_{i+1} \in W^u(\tilde{z}_i),$ such that $p(\tilde{z}_i) = z_i,$ and for any open set $U \in M,$ with $z_i \subset U,$ there is $k = k(U) \geq 0,$ integer, such that $f^k(B_{\varepsilon} (x)) \cap U \neq \emptyset.$    Then there is $\hat{k} \geq 0,$ integer, such that $f^{\hat{k}}(B_{\varepsilon} (x)) \cap B_{\varepsilon} (z_{i+1}) \neq \emptyset.$
\end{lemma}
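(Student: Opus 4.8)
The plan is to mirror the strategy of Lemma \ref{accs}, but to carry out the argument upstairs in the universal cover $\widetilde{M}\times\mathbb{R}$ (or, more precisely, in a covering space where the relevant unstable leaf is a genuine embedded submanifold), and then push the conclusion back down by the covering projection. First I would fix a lift $\widetilde{z}_{i+1}$ of $z_{i+1}$ lying on the lifted unstable leaf $\widetilde{W}^u(\widetilde{z}_i)$ determined by the preorbit $\tilde z_i$; since $\tilde f$ is a partially hyperbolic diffeomorphism on $\widetilde M$ (Proposition \ref{pro 6}), this unstable manifold is well defined, embedded, and $\tilde f^{-1}$-contracted. The unstable leaf $W^u(\tilde z_i)$ in $M$ is then the projection of $\widetilde{W}^u(\widetilde{z}_i)$, and it contains the segment from $z_i$ to $z_{i+1}$.

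Next I would exploit the hypothesis that $f^{k}(B_\varepsilon(x))$ enters every neighborhood of $z_i$: choose $k$ so that $f^{k}(B_\varepsilon(x))$ contains a point $w$ very close to $z_i$, and then (shrinking the neighborhood) so that $f^k(B_\varepsilon(x))$ contains a small piece of unstable manifold $W^u_{loc}(w)$ through $w$ — here I use that $f^k(B_\varepsilon(x))$ is an open set, hence contains a full local unstable plaque around each of its points, and that local unstable plaques vary continuously (again this is seen cleanly in the cover, where $\tilde f^k$ applied to a lift of $B_\varepsilon(x)$ is open and the unstable lamination is continuous). By continuity of unstable plaques, $W^u_{loc}(w)$ shadows $W^u_{loc}(z_i)$, so that a suitably long piece of $W^u(\tilde z_i)$ running from near $z_i$ to near $z_{i+1}$ is approximated to within any prescribed $\delta$ by a piece of unstable manifold contained in $f^k(B_\varepsilon(x))$. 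In particular $f^k(B_\varepsilon(x))$ contains a point $w'$ with $d(w', z_{i+1})<\delta$. Choosing $\delta<\varepsilon$ and setting $\hat k = k$ gives $f^{\hat k}(B_\varepsilon(x))\cap B_\varepsilon(z_{i+1})\neq\emptyset$.

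The one genuine subtlety — and the step I expect to be the main obstacle — is the continuity/compactness bookkeeping needed to make ``$W^u_{loc}(w)$ shadows $W^u_{loc}(z_i)$ along the whole segment'' precise, because the unstable lamination on $M$ is not a foliation and the relevant leaf is singled out only through the preorbit $\tilde z_i$. To handle this I would work with a compact arc $J\subset W^u(\tilde z_i)$ from $z_i$ to $z_{i+1}$, lift it to a compact arc $\widetilde J\subset\widetilde{W}^u(\widetilde z_i)$, cover $\widetilde J$ by finitely many unstable foliation boxes of the lifted diffeomorphism $\tilde f$, and use uniform continuity of the unstable holonomy within each box together with the fact that $\tilde f^k$ of a lifted ball is open to propagate the inclusion of a nearby unstable plaque box-by-box along $\widetilde J$. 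Projecting down, the terminal plaque lies in $f^k(B_\varepsilon(x))$ and meets $B_\varepsilon(z_{i+1})$. (If one prefers not to invoke $\tilde f$ being a diffeomorphism, the same argument runs using local unstable manifolds $W^u_{loc}(\tilde x)$ attached to preorbits, which are continuous in $\tilde x\in M^f$; the covering-space formulation is just the cleanest way to organize the continuity estimates.)

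Finally I would remark that, exactly as in \cite{Br} and in Lemma \ref{accs}, Lemmas \ref{accs} and \ref{accu} combine by induction on $i$: starting from the trivial fact that $f^0(B_\varepsilon(x))=B_\varepsilon(x)$ meets every neighborhood of $z_0=x$, one applies the appropriate lemma at each step of the accessibility path $z_0,\dots,z_n=y$ to conclude that $f^{\hat k}(B_\varepsilon(x))\cap B_\varepsilon(z_n)=f^{\hat k}(B_\varepsilon(x))\cap B_\varepsilon(y)\neq\emptyset$ for some $\hat k\geq 0$, which is topological transitivity; this completes the proof of Theorem \ref{accBrin}.
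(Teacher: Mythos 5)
There is a genuine gap, and it is located exactly where you flag the ``one genuine subtlety''. You take $\hat k=k$ and argue that $f^{k}(B_{\varepsilon}(x))$, being open, already contains a piece of unstable manifold through a point $w$ near $z_i$ that is \emph{long enough} to shadow the whole arc of $W^u(\tilde z_i)$ from $z_i$ to $z_{i+1}$. Openness only gives you an unstable plaque through $w$ of radius comparable to the distance from $w$ to the boundary of $f^{k}(B_{\varepsilon}(x))$, which can be arbitrarily small; the hypothesis merely says that $f^{k}(B_{\varepsilon}(x))$ meets a small neighborhood $U$ of $z_i$, with no control on the unstable extent of that intersection. Since the arc from $z_i$ to $z_{i+1}$ inside $W^u(\tilde z_i)$ has some fixed length $L$ that may far exceed the size of $U$, your box-by-box propagation establishes that the \emph{leaf} through $w$ shadows the leaf through $z_i$ for length $L$, but not that this long shadowing piece is \emph{contained in} $f^{k}(B_{\varepsilon}(x))$ --- which is what you need to conclude $f^{k}(B_{\varepsilon}(x))\cap B_{\varepsilon}(z_{i+1})\neq\emptyset$.

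The missing idea, which is the heart of Brin's argument and of the paper's proof, is to iterate \emph{further forward} and use Poincar\'e recurrence: fix $R>0$ so that every unstable $R$-disc centered in $U$ and $C^1$-close to $W^u(\tilde z_i)$ crosses $B_{\varepsilon}(z_{i+1})$; take a recurrent point $\theta\in U\cap f^{k}(B_{\varepsilon}(x))$ whose return times $n_j$ can be arranged (via a neighborhood $V_N$ of $z_{i,-N}$ with $f^N(V_N)=U$) so that the last $N$ steps of the orbit of $f^{n_j-N}(\theta)$ track the preorbit $\tilde z_i$; then the small plaque $W^u(\theta)\cap U$ grows under $f^{n_j}$ into an $R$-disc centered at $f^{n_j}(\theta)\in U$ that is $C^1$-close to the $R$-disc of $W^u(\tilde z_i)$, hence meets $B_{\varepsilon}(z_{i+1})$. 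The conclusion then holds with $\hat k=k+n_j$, not $\hat k=k$. Without this recurrence-and-growth step your argument does not close; note also that the recurrence must be set up carefully (as in the paper, through the sets $V_N$ and $U''$) precisely because in the endomorphism setting the returning point must come back with a preorbit, and hence an unstable direction, close to the one singled out by $\tilde z_i$.
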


\begin{proof}
First, we recall that for the partially hyperbolic system $f$  the unstable manifolds $W^u_f(\tilde{x}),$  where
$
\tilde{x} = (\ldots, x_{-2}, x_{-1}, x_0, x_1, x_2, \ldots) \in M^f,
$
can be constructed using the graph method, as outlined in \cite{HPS}. Moreover, the lifts of $\tilde{f}$ are partially hyperbolic diffeomorphisms of $ \widetilde{M}, $ \cite{micsa}, for which both unstable and stable manifolds can be constructed. Additionally, the complete orbits of $ \tilde{f}$ project onto complete orbits of $M^f.$

Fixing a lift $\tilde{f}$, we note that unstable manifolds vary $C^1-$continuously by the graph method. The density in $M^f$ of orbits projected from orbits of $\widetilde{M}^{\tilde{f}}$ ensures that the unstable manifolds of  $f$ can be $C^1-$approximated by the projections of the unstable manifolds of $\tilde{f}$ onto $M$ from the universal cover $\widetilde{M}.$ Given a fixed $R > 0,$ this approach can be carried out by considering unstable discs of constant radius.

Project an unstable leaf $W^u(z_i) = \pi(W)$ from the universal cover $\tilde M$, ensuring it is  $C^1-$close to $W^u(\tilde{z}_i)$ and intersects $B_{\varepsilon}(z_{i+1})$ at a point $w_{i+1},$ very close to $z_{i+1}.$

Let $U$ be a small enough neighborhood of $z_i$ contained in $f^k(B_{\varepsilon}(x)).$ Project  of a  $u-$foliated  set from $\tilde M$ to $M$, such that unstable leaves starting in $U$ cross $B_{\varepsilon }(z_{i+1}).$ By continuity, there exists  $R > 0,$ such that every $R-$radius unstable disc centered in $U$ intersects $B_{\varepsilon }(z_{i+1}).$

For $\hat{z}_i = (\ldots z_{i, -3}, z_{i, -2}, z_{i, -1}, z_{i, 0}, z_{i, 1}  z_{i, 2}, \ldots)\in M^f$ such that $p(W^u(\hat{z}_i)) = W^u(z_i),$ choose $N > 0$  and $V_N\subset M$ an open neighborhood of $z_{i, -N}$ such that $f^N(V_N) = U.$ Let $V_N' = Rec(f) \cap V_N.$ Since $f$ is a local diffeomorphism, we get $m(U \setminus f^N(V_N')) = 0.$ Moreover, taking $U' = Rec(f) \cap U,$ we get $m(U \setminus U') = 0.$ Consequently $m(U'\setminus f^N(V')) = 0.$ Finally, take the set $ U'' =  Rec(f) \cap U' \cap f^N(V') .$ By Poincar\'e Recurrence Theorem $m(U \setminus U'') = 0.$ Hence, for $m-$almost every point $ \theta\in U$  there is an increasing sequence $(n_k)_{k=1}^{+\infty}$ of positive integers, such that $f^{n_k}(\theta) \in U'' $ and consequently $f^{n_k - N} (\theta) \in V_N.$

To finish the proof, choose a point $\theta \in U'' $ and $W^u(\theta) \cap U$ a connected component of the leaf of the foliation previously constructed containing $\theta.$ Consider the unstable submanifolds
$ f^{n_k - N} (W^u(\theta) \cap U).$ Since $N$ is arbitrarily  large, we obtain that the $R-$disc of $f^N (f^{n_k - N} (W^u(\theta) \cap U))$ centered in $f^{n_k}(\theta)$ is $C^1-$close to the $R-$disc of $W^u(f^{n_k}(\theta)) $ centered in $f^{n_k}(\theta),$ which  is $C^1-$close to the $R-$disc $W^u(z_i),$ centered in $z_i.$ So, for large values of $j > 0,$ we get $f^{n_j}(W^u(\theta) \cap U) \cap B_{\varepsilon}(z_{i+1}) \neq \emptyset.$

Finally,  we get $f^{k + n_j}(B_{\varepsilon}(x) ) \cap B_{\varepsilon}(z_{i+1})  \neq \emptyset ,$  for large $j > 0.$
\end{proof}

Theorem \ref{accBrin} follows by iteratively applying  Lemmas \ref{accs} and \ref{accu} along the $C^0-$path from $x$ to $y$.
\end{proof}

\section{Proof of Theorem \ref{mainB}} \begin{proof}

Our strategy to establish ergodicity is to prove that the ergodic decomposition of $m$ consists of  a single measure.

Let $\{\mu_{P}\}_{\alpha \in \mathcal{P}}$ be the ergodic decomposition of the measure $m.$ Using Ruelle's inequality is not hard to check:

\begin{lemma} \label{lem1} For $\overline{\mu}$ a.e. $P \in \mathcal{P},$ the measures $\mu_{P}$ is $SRB.$
\end{lemma}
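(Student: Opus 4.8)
The plan is to apply Theorem~\ref{pesin2} (the Ledrappier--Young--Qian--Zhu entropy characterization) to the ergodic components. The starting observation is that $m$ itself is an SRB measure: since $F_\varphi$ is $C^2$ and preserves the smooth volume $m$, $\log|JF_\varphi|\in L^1(M\times\mathbb S^1,m)$ and Pesin's entropy formula \eqref{PesinU} holds for $(F_\varphi,m)$, so by Theorem~\ref{pesin2} $m$ is SRB. Now write the ergodic decomposition $m=\int \mu_P\,d\overline\mu(P)$. Each $\mu_P$ is $F_\varphi$-invariant, and I will verify that the entropy formula holds for $\overline\mu$-a.e. $\mu_P$; granting that, Theorem~\ref{pesin2} applied to each component gives that $\mu_P$ is SRB for $\overline\mu$-a.e. $P$, which is exactly the statement of Lemma~\ref{lem1}.

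First I would record two ingredients. The first is Ruelle's inequality for $C^1$ (hence $C^2$) endomorphisms: for any invariant Borel probability $\mu$ one has $h_\mu(F_\varphi)\le \int \sum_i \lambda^i(x)^+ m_i(x)\,d\mu$. This holds in particular for each $\mu_P$, giving the ``$\le$'' direction of the entropy formula for every component for free. The second ingredient is the affinity of entropy and of the positive-Lyapunov-exponent integral under ergodic decomposition: both $P\mapsto h_{\mu_P}(F_\varphi)$ and $P\mapsto \int \sum_i \lambda^i(x)^+ m_i(x)\,d\mu_P$ are measurable, and integrating them against $\overline\mu$ recovers $h_m(F_\varphi)$ and $\int \sum_i \lambda^i(x)^+ m_i(x)\,dm$ respectively (the Lyapunov exponents and their multiplicities are defined $m$-a.e., hence $\mu_P$-a.e. for a.e.\ $P$, and are the same measurable functions). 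The affinity of metric entropy under ergodic decomposition is standard; for endomorphisms one can either quote it directly or pass to the inverse limit $\overline M=M^{F_\varphi}$, where $\overline{F_\varphi}$ is a homeomorphism and $h_{\mu}(F_\varphi)=h_{\hat\mu}(\overline{F_\varphi})$, and use the classical result there.

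The argument then closes by a standard ``equality under the integral'' trick. Since $m$ is SRB, $h_m(F_\varphi)=\int\sum_i\lambda^i(x)^+m_i(x)\,dm$. Disintegrating,
\[
\int \Big(h_{\mu_P}(F_\varphi)\Big)\,d\overline\mu(P)=\int\Big(\int \sum_i\lambda^i(x)^+m_i(x)\,d\mu_P\Big)\,d\overline\mu(P).
\]
By Ruelle's inequality the integrand on the left is $\le$ the integrand on the right for $\overline\mu$-a.e.\ $P$; since the integrals coincide, equality holds pointwise $\overline\mu$-a.e. Thus for $\overline\mu$-a.e.\ $P$, $h_{\mu_P}(F_\varphi)=\int\sum_i\lambda^i(x)^+m_i(x)\,d\mu_P$, so Theorem~\ref{pesin2} gives that $\mu_P$ is SRB, proving Lemma~\ref{lem1}.

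The main obstacle, and the point that needs care rather than ingenuity, is justifying the two affinity/measurability facts in the endomorphism setting: that the integrability condition $\log|JF_\varphi|\in L^1(\mu_P)$ holds for a.e.\ component (immediate, since it is dominated by an $m$-integrable function and the $\mu_P$ are obtained by disintegrating $m$), and that both entropy and the sum of positive exponents integrate correctly over the decomposition. For the entropy, the cleanest route is to invoke the inverse-limit correspondence of Theorem~2.5 together with the classical ergodic-decomposition affinity of entropy for the homeomorphism $\overline{F_\varphi}$; for the Lyapunov side one uses that the Oseledets data is a fixed family of measurable functions on $M$ (depending only on $DF_\varphi$), so its integral is automatically affine. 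Everything else is a direct citation of Theorem~\ref{pesin2} and Ruelle's inequality.
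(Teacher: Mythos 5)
Your argument is correct and is essentially the paper's own proof: Pesin's formula for the smooth measure $m$, affinity of entropy under the ergodic decomposition (the Jacobs formula), Ruelle's inequality for each component, and the ``equality under the integral'' step, followed by the entropy characterization of SRB measures (Theorem~\ref{pesin2}). The additional care you take with measurability, the $L^1$ condition, and passing to the inverse limit only makes explicit what the paper leaves implicit.
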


\begin{proof} Let $\lambda^u_{F_{\varphi}}(x,\theta)=\displaystyle\sum_{\lambda_{F_{\varphi}}(x,\theta)>0} \lambda_{F_{\varphi}}(x,\theta).$ By Pesin's formula $h_{m}(F_{\varphi}) = \int_{\mathbb{T}^3}  \lambda^u_{F_{\varphi}}(x,\theta) dm (x,\theta)$.
By the Jacobs formula
$$ h_m(F_{\varphi}) = \int_{\mathcal{P}} h_{\mu_{P}} (F_{\varphi}) d\overline{\mu}(P) \underbrace{\leq}_{Ruelle's\, inequality}  \int_{\mathcal{P}} \lambda^u_{F_{\varphi}}(\mu_{P}) d\overline{\mu}(P) = \int_{\mathbb{T}^3} \lambda^u_{F_{\varphi}}(x,\theta) dm(x,\theta).  $$
We  conclude that for $\overline{\mu}$ almost every $P,$ we have $h_{\mu_{P}}(F_{\varphi}) = \lambda^u_{F_{\varphi}}(\mu_{P}),$ meaning that $\mu_{\alpha} $ is $SRB$ for $F_{\varphi}.$
\end{proof}

Consider $\mu_P$ an ergodic SRB $F_{\varphi}$-invariant component of $m$. Since $F_{\varphi} \circ G_{\theta} = G_{\theta} \circ F_{\varphi}$, we have
$$ (G_{\theta})_{\ast} (\mu_P) = (G_{\theta})_{\ast}((F_{\varphi})_{\ast}(\mu_P)) = (F_{\varphi})_{\ast}((G_{\theta})_{\ast}(\mu_P)), $$
so $\mu_{\theta P}:= (G_{\theta})_{\ast} (\mu_P)$ is a conditional measure of $m$, as $(G_{\theta})_{\ast}(m) = m$. Moreover, since  the center foliation $\mathcal{F}^c$ is absolutely continuous, it follows that $\mu_{\theta P}$ is SRB if $\mu_P$ is SRB.

As $\mu_P$ is SRB, there exists an open disk $D^u$ such that $D^u \subset B_{\mu_P}$ (the basin of $\mu_P$). Additionally, the $s$-saturation $Sat^s(D^u)$ is contained in $B_{\mu_P}$. We note that if $x \in D^u \cap B_{\mu_P}$, then $x' = G_{\theta}(x) \in B_{\mu_{\theta P}}$. Indeed, given $g: M \times \mathbb{S}^1 \rightarrow \mathbb R$ a continuous map, and considering that $F_{\varphi}^j \circ G_{\theta} = G_{\theta} \circ F_{\varphi}^j$ for any $j \geq 0$, we obtain:

$$\frac{1}{n} \sum_{j=0}^{n-1} g(F_{\varphi}^j(G_{\theta}(x))) =  \frac{1}{n} \sum_{j=0}^{n-1} g \circ G_{\theta}(F_{\varphi}^j(x)),$$
so, as $n \rightarrow + \infty$, we get

$$\displaystyle\lim_{n \rightarrow + \infty} \frac{1}{n} \sum_{j=0}^{n-1} g(F_{\varphi}^j(G_{\theta}(x))) = \displaystyle\lim_{n \rightarrow + \infty}
\frac{1}{n} \sum_{j=0}^{n-1} g \circ G_{\theta}(F_{\varphi}^j(x)) = \displaystyle\int_{\mathbb{T}^2 \times \mathbb{S}^1 } g \circ G_{\theta} d\mu_P  =
 \displaystyle\int_{\mathbb{T}^2 \times \mathbb{S}^1 } g d\mu_{\theta P}.$$

We claim that $\mu_P$ is the unique ergodic component of $m$. Suppose $\mu_Q$ is another ergodic SRB component of $F_{\varphi}$. As before, consider $D'^u$ an unstable disk contained in $B_{\mu_Q}$. The $s$-saturation $Sat^s(D'^u)$ is a dense subset of $M\times \mathbb{S}^1$. Considering an open rectangle $R \subset Sat^s(D^u)$ such that $D^u \subset R$, the density of $Sat^s(D'^u)$ and the absolute continuity of $\mathcal{F}^c$ allow us to conclude that, for a small angle $\theta$, the map $G_{\theta}$ sends a point $x \in Sat^s(D'^u)$ to a point $x' = G_{\theta}(x) \in R$. By construction, $x' \in B_{\mu_P} \cap B_{\mu_{\theta Q}}$.

Since $\mu_P$ and $\mu_Q$ are distinct ergodic components of $m$, there exists a continuous function $\psi: M \times \mathbb{S}^1 \rightarrow M$ such that

$$\delta = \left| \displaystyle\int_{M \times \mathbb{S}^1} \psi d\mu_{P} - \displaystyle\int_{M \times \mathbb{S}^1} \psi d\mu_{Q} \right| > 0. $$

Consider $I_P =  \displaystyle\int_{M \times \mathbb{S}^1} \psi d\mu_{P}$, $I_Q =  \displaystyle\int_{M \times \mathbb{S}^1} \psi d\mu_{Q}$, and finally $I_{\theta Q} =  \displaystyle\int_{M \times \mathbb{S}^1} \psi d\mu_{\theta Q} =  \displaystyle\int_{M \times \mathbb{S}^1} \psi \circ G_{\theta} d\mu_{Q}.$

As before, the angle $\theta$, which defines $x' = G_{\theta}(x) \in R$, can be made arbitrarily small, so that $|I_Q - I_{\theta Q}| < \frac{\delta}{2}$, since $G_{\theta}$ is $C^1$-close to the identity.

On the other hand, since $x' \in B_{\mu_P} \cap B_{\mu_{\theta Q}}$, we have $\mu_P = \mu_{\theta Q}$, and thus $I_P = I_{\theta Q}$.

Finally,

$$\delta = |I_P - I_Q| = |I_{\theta Q} - I_Q| < \frac{\delta}{2},$$
a contradiction.

Thus, the decomposition of $m$ into ergodic components is unique, and therefore $m$ is ergodic.

\end{proof}

\section{Proof of Theorem \ref{mainD}}

The strategy is similar to the proof of Theorem \ref{mainB}.

\begin{proof}
In the present Theorem, accessibility means $u-$accessibility; that is, each component in the path in Definition \ref{accdef}   lies in an unstable manifold.  Let $m$ be the volume normalized measure preserved by $F$, and $\{\mu_P\}_{P \in \mathcal{P}}$ be the ergodic decomposition of $m.$ As in Lemma \ref{lem1},  the ergodic component $\mu_P$ is $SRB,$ for $\bar{\mu}$ a.e. $P.$ We will prove that the decomposition of $m$ is a singleton. Let $\mu$ and $\nu$ be two distinct ergodic conditional $SRB-$measures of $m.$ There is a continuous function $\psi: M \times \mathbb{S}^1\rightarrow \mathbb{R}$, such that $\displaystyle\int_{M \times \mathbb{S}^1} \psi d\mu \neq \displaystyle\int_{M \times \mathbb{S}^1} \psi d\nu.$

Take $\delta = \left|\displaystyle\int_{M \times \mathbb{S}^1} \psi d\mu - \displaystyle\int_{M \times \mathbb{S}^1} \psi d\nu\right|  > 0.$ Since $\mu$ and $\nu$ are $SRB$ components, there exist open $u-$disks $W_1$ and $W_2,$ such that $W_1 \subset B_{\mu}$ and $W_2 \subset B_{\nu}.$ Let $\varepsilon > 0$ be small enough and consider the following tubes

$$T_i = \bigcup_{-\varepsilon < r < \varepsilon}\{W_i + r\}, i = 1,2,$$
where the additive notation means a translation in the vertical direction (tangent to $\mathbb{S}^1$). The disks $T_1$ and $T_2$ are open sets and foliated by unstable manifolds $W_i + r, i = 1,2$ respectively.

As before, if we define $\mu_r = (G_r)\ast(\mu),$ with  $-\varepsilon < r < \varepsilon,$ we obtain $$\left|\displaystyle\int_{M \times \mathbb{S}^1} \psi d\mu_r - \displaystyle\int_{M \times \mathbb{S}^1} \psi d\mu \right| < \frac{\delta}{4},$$ for $\varepsilon > 0$ small enough.

The endomorphism $F$ is topologically transitive, then there is $n > 0$ such that $F^n(T_1) \cap T_2 \neq \emptyset.$ Then there is a small $r \in (-\varepsilon , \varepsilon),$ for which is well defined a center holonomy between a small disk $W_1^n$ of $F^n(W_1 + r)$ and $W_2.$  As before, there is a point $x \in B_{\mu_r}$ such that  $x = G_s(x'), x' \in W_2,$ and  $x \in B_{\mu_r} \cap B_{\nu_s},$ where $r,s \in (-\varepsilon , \varepsilon). $

So $\mu_r = \nu_s$ and $\left|\displaystyle\int_{M \times \mathbb{S}^1} \psi d\nu_s - \displaystyle\int_{M \times \mathbb{S}^1} \psi d\mu \right| < \frac{\delta}{4}.$ On the other hand

 $\left|\displaystyle\int_{M \times \mathbb{S}^1} \psi d\nu_s - \displaystyle\int_{M \times \mathbb{S}^1} \psi d\mu \right| \geq \left|\displaystyle\int_{M \times \mathbb{S}^1} \psi d\nu - \displaystyle\int_{M \times \mathbb{S}^1} \psi d\mu \right| - \left|\displaystyle\int_{M \times \mathbb{S}^1} \psi d\nu_s - \displaystyle\int_{M \times \mathbb{S}^1} \psi d\nu \right| > \delta - \frac{\delta}{4} = \frac{3\delta}{4},$ a contradiction.

 We conclude that the system $(F, m)$ is ergodic.

\end{proof}

\section{Examples}

Let $A: \mathbb{T}^2 \rightarrow \mathbb{T}^2,$ such that $A(x,y) = (3x +  y, x + y) (mod \, \mathbb{Z}^2).$  Let $U$ be a small neighborhood of $0.$ The map $A$ is a volume-preserving Anosov endomorphism with degree equal to two. Since we will deal with local perturbations we can consider that $E^s_A = \langle\ e_1 \rangle$ and $E^u_A = \langle e_2 \rangle,$ where $e_1, e_2$ are the canonical vectors.  Construct a $C^{\infty}-$function $\varphi: \mathbb{T}^2 \rightarrow \mathbb{S}^1$  satisfying:

\begin{enumerate}
  \item $\varphi$ is $C^1-$close to the constant zero function.
   \item $\varphi(0) = 0.$
  \item $\varphi$ is null in $\mathbb{T}^2\setminus U.$
  \item $\frac{\partial \varphi}{\partial{y}}(0) \neq 0.$
\end{enumerate}

The map $F_{\varphi} ((x,y), \theta)  = (A(x,y), \theta + \varphi(x,y))$ where $(x,y) \in \mathbb{T}^2, \theta\in \mathbb{S}^1$ is a volume-preserving partially hyperbolic endomorphism with degree two. We observe that $O = (0,0,0)$ is a fixed point of $F_{\varphi}.$ Moreover,  restricted to $(0,0)\times \mathbb S^1,$ $F_{\varphi}$ acts as the identity.  Then, at the point $O,$ we have
$E^s =  \langle\ e_1 \rangle, E^u =  \langle\ e_2 \rangle$ and $E^c =  \langle\ e_3 \rangle.$

 For every $z \in \mathbb{T}^3$, at least one preimage by $F_{\varphi}$ lies out of $U,$  since $F_{\varphi}$ is locally injective. So, the exists a sequence $ \tilde{z} = (\ldots, z_{-3}, z_{-2}, z_{-1}, O,O,O \ldots ) \in M^{F_{\varphi}},$  such that $z_{-n} = (p_{-n}, 0), n \geq 1,$ such that $ p_{-n} \notin U,$ for any $n \geq 2,$ also $p_{-1} = 0.$

 The action of $F_{\varphi}$ is the same as $A \times Id_{\mathbb{S}^1},$ for each $z_{-n}, n \geq 2.$  Then, on the point $z_{-1},$ we obtain
$E^s =  \langle\ e_1 \rangle, E^u =  \langle\ e_2 \rangle$ and $E^c =  \langle\ e_3 \rangle.$

We have $$DF_{\varphi}(x,y,\theta) = \left(
                                       \begin{array}{ccc}
                                         a_s & 0 & 0 \\
                                         0 & a_u & 0 \\
                                         \frac{\partial \varphi}{\partial x}(x,y) & \frac{\partial \varphi}{\partial y}(x,y) & 1 \\
                                       \end{array}
                                     \right),
$$ where $a_s$ and $a_u$ are the constant of contraction and expansion of $A,$ respectively.  By the partially hyperbolic structure at  $\tilde{z},$ we have $E^u = \langle a_u \cdot e_2 + \frac{\partial \varphi (p)}{ \partial y } \cdot e_3 \rangle $  at $O = (0,0,0)$.
So $E^u$ has a vertical component different from $E^u$ corresponding to the orbit constant equal to $O $ for each coordinate $n \in \mathbb{Z}.$ Hence, $E^s \oplus E^u$ is not integrable.
By Theorem A, the stable foliation of  $F_{\varphi}$ is minimal, and consequently $F_{\varphi}$ is accessible.  Since $F_{\varphi}$ is $m-$preserving, Theorem \ref{mainB} implies that the system $(F_{\varphi}, m)$ is ergodic.

\section {Proof of Theorem \ref{mainC}}
\begin{proof}[Proof of Theorem \ref{mainC}]

First, we observe that $\displaystyle\int_M \lambda^c_f(x) dm(x) > 0$ implies $Vol(\Lambda^c_+(f)) > 0,$ where $\Lambda^c_+(f)  = \{ x \in M|\; \lambda^c_f(x) > 0 \}$. Consider $g$ the restriction $g= f|_{\Lambda^c_+(f)}: \Lambda^c_+(f) \rightarrow \Lambda^c_+(f)$ and $\mu $ such that $\mu(B) = \frac{m(B \cap \Lambda^c_+(f))}{m(\Lambda^c_+(f))}.$ Of course $g$ preserves $\mu.$

Moreover, by the Pesin's Formula, $m$ is a SRB measure for $f.$

As before, every ergodic component of $m$ is a $SRB$ measure. We define the set $\mathcal{E}^+ =\{ \mu \in \mathcal{M}_f |\; \mu \mbox{is a $SRB$ ergodic component of $m$ such that } \mu(\Lambda^c_+(f)) = 1 \}.$

Given $\mu \in \mathcal{E}^+ $ consider $\hat{\mu}$ the unique borelian measure in $M^f$ such that $p_{\ast}(\hat{mu}) = \mu.$ For $\hat{\mu}$ almost every $\tilde{x} \in M^f,$ $\dim (W^{+}(\tilde{x})) = d_u + 1.$

For a given subordinated partition $\xi$ with respect to $W^+$ of $\hat{\mu},$ holds

\begin{equation}\label{ae}
\hat{\mu}_{\tilde{x}}^{\xi} (p^{-1} (\Lambda^c_+(f) \cap B_{\mu}) ) = 1, \hat{\mu}- a.e. \; \tilde{x} \in M^f,
\end{equation}
where $B_{\mu}$ denotes the basin of $\mu.$ Consequently there in a disk $D^+ \subset W^+(\tilde{x})$ such that $Leb_{D^+}$ a.e $y \in D^+$ lies in $\Lambda^c_+(f) \cap B_{\mu}, $ for a taken $\tilde{x}$ as in $(\ref{ae}).$

Let $D^+_s$ the $s-$saturation of $D^+ \cap B_{\mu}\cap \Lambda^c_+(f) .$ Since $B_{\mu}$  and $\Lambda^c_+(f)$ are $s-$saturated sets, we get $D^+_s \subset B_{\mu}$ and  $D^+_s \subset \Lambda^c_+(f),$ in particular the sets $\Lambda^c_+(f)$ and $B_{\mu}$ are open and dense sets, up to a set $Z,$ such that $m(Z) = 0.$ As a conclusion of this, the set $\mathcal{E}^+$ is a unit set. Thus  $ \mathcal{E}^+ = \{\nu\},$ such that $\nu(B) = \frac{m(B \cap \Lambda^c_+(f) )}{m(\Lambda^c_+(f))},$ and the restriction $f|_{\Lambda^c_+(f)}$ is ergodic, with respect to $\nu.$

Let $U \subset M,$ an open and dense set such that $m( (\Lambda^c_+(f) \setminus U) \cup( U \setminus \Lambda^c_+(f) ) = 0.$ Consider the compact set $K = M\setminus U.$ We will show that $m(K) = 0.$

Suppose that $m(K) > 0,$ in particular, by definition $\lambda^c_f(x) \leq 0,$ for $m-$a.e. $x \in K,$ moreover, $K$ is $s-$saturated, up to a null volume subset. Let  $K^s$ be the $s-$saturation of $K,$ we have $m(K^s) > 0.$ Since $\mathcal{F}^s$ is minimal and compactness of $K,$ it is possible to find a uniform size $R> 0,$ such that the $s-$disks centered in points of $K$ enters in $U.$ In this way, taking into account $\mathcal{F}^s$ is an absolutely continuous foliation and $m(K) > 0,$ then  $K^s$ enters in $U$ with positive measure, and we could conclude $m(U \cap K) = m(U \cap K^s) > 0,$ a contradiction. Thus $f$ is ergodic.

It remains to prove in the case $(2)$ of the statement of Theorem \ref{mainC}, that $f$ is robustly ergodic, meaning that, there is a $C^1-$neighborhood $\mathcal{U}$ among $f$ of all $m-$preserving partially hyperbolic endomorphism, such that every $g \in \mathcal{U}$ a $C^2-$endomorphism hods that $g$ is ergodic with respect to $m.$

From \cite{micsa}, given an arbitrary partially hyperbolic endomorphism $g: M \rightarrow M,$ the bundles $E^{cs}_g(x), E^s_g(x)$ are uniquely defined for each $x \in M,$ independent of the $\tilde{x} \in M^g$ such that $p(\tilde{x}) = x.$ Combining the Birkhoff Ergodic and Oseledec's Theorem

\begin{equation}\label{center_exponent}
\begin{aligned}
\int_M \lambda^c_g(x) \, dm(x) &= \int_M (\lambda^c_g(x) + \lambda^s_g(x)) \, dm(x) - \int_M \lambda^s_g(x) \, dm(x) \\
&= \int_M J^{cs}g(x) \, dm(x) - \int_{M} J^sg(x) \, dm(x).
\end{aligned}
\end{equation}

From \cite{micsa}, the bundles $E^{cs}_g$ and $E^s_g$ varies continuously with $x \in M$ and with respect to the endomorphism.  Now, using $(\ref{center_exponent}),$ in a small neighborhood $\mathcal{U}$ of $f$ among all $m-$preserving partially hyperbolic endomorphism, we get $\int_M \lambda^c_g(x)dm(x) > 0,$ for any $g \in \mathcal{U}.$ If $\mathcal{U}$ is enough small around $f,$ a $C^2$ and $m-$preserving $g \in \mathcal{U}$ has minamal stable foliation, hence $g$ is ergodic, as proved in the first part.

\end{proof}

It is interesting to note that Theorem $\ref{mainC}$ holds also for diffeomorphisms. We can state the following corollary.

\begin{cor}
 Let $f: M \rightarrow M$ be a $C^2-$volume preserving partially hyperbolic diffeomorphism, with $dim(E^c)=1$ \, and\,  $\dim(E^{\ast} ) \geq 1, \ast \in \{s,u\}. $ Suppose that $\displaystyle\int_{M} \lambda^c_f(x) dm(x) > 0.$
\begin{enumerate}
\item If $\mathcal{F}^s_f$ is minimal, then $f$ is ergodic.
\item If $\mathcal{F}^s_f$ is robustly minimal, then $f$ is stably ergodic.
\end{enumerate}
\end{cor}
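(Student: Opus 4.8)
The plan is to rerun the proof of Theorem~\ref{mainC} in the invertible setting, where the argument simplifies at every step: the inverse limit $M^f$ is $M$ itself (the projection $p$ is the identity), the bundles $E^u$, $E^{cs}=E^s\oplus E^c$ and the stable and unstable foliations $\mathcal{F}^s$, $\mathcal{W}^u$ are globally and unambiguously defined on $M$ with no passage to the universal cover, and classical Pesin theory for $C^2$ volume‑preserving diffeomorphisms applies directly. In particular, by the Pesin entropy formula the smooth measure $m$ is an SRB measure for $f$, and every ergodic component of $m$ is again SRB. Writing $\Lambda^c_+(f)=\{x\in M:\lambda^c_f(x)>0\}$, the hypothesis $\int_M\lambda^c_f\,dm>0$ forces $m(\Lambda^c_+(f))>0$; moreover $\lambda^c_f$ is constant along stable leaves (the center being one‑dimensional with Hölder $E^c$), so $\Lambda^c_+(f)$ and the basins of the ergodic components are $s$‑saturated up to $m$‑null sets.

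First I would prove that $f|_{\Lambda^c_+(f)}$ is ergodic, exactly as in Theorem~\ref{mainC}. Let $\mathcal{E}^+$ be the set of ergodic SRB components $\mu$ of $m$ with $\mu(\Lambda^c_+(f))=1$. For $\mu\in\mathcal{E}^+$ the Pesin ``positive'' manifolds $W^+$ have dimension $d_u+1$, and by the SRB property the conditional measures of $\mu$ along a partition subordinate to $W^+$ are absolutely continuous; hence there is a $(d_u+1)$‑disk $D^+\subset W^+(x)$ whose Lebesgue‑a.e.\ point lies in $B_\mu\cap\Lambda^c_+(f)$. Saturating $D^+\cap B_\mu\cap\Lambda^c_+(f)$ by local stable manifolds and invoking the absolute continuity of $\mathcal{F}^s$ produces a set of positive $m$‑measure, contained in $B_\mu\cap\Lambda^c_+(f)$, that fills an open neighborhood up to a null set; by minimality of $\mathcal{F}^s$ such a set is also dense, so two of them attached to distinct ergodic components would have to intersect in positive measure inside disjoint basins — impossible. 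Hence $\mathcal{E}^+=\{\nu\}$ with $\nu(\,\cdot\,)=m(\,\cdot\,\cap\Lambda^c_+(f))/m(\Lambda^c_+(f))$.

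To upgrade this to ergodicity of $f$ on all of $M$, pick an open dense set $U$ with $m\big((\Lambda^c_+(f)\setminus U)\cup(U\setminus\Lambda^c_+(f))\big)=0$ and put $K=M\setminus U$; I claim $m(K)=0$. Otherwise $\lambda^c_f\le 0$ $m$‑a.e.\ on $K$ and $K$ is $s$‑saturated up to a null set, so its stable saturation $K^s$ equals $K$ mod $m$‑null. Compactness of $K$ together with minimality of $\mathcal{F}^s$ gives a uniform radius $R>0$ such that the local stable $R$‑disk through any point of $K$ meets $U$, and absolute continuity of $\mathcal{F}^s$ then yields $m(U\cap K^s)>0$; but $U\cap K=\emptyset$ while $K^s=K$ mod null, so $m(U\cap K^s)=0$, a contradiction. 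Therefore $m(K)=0$, i.e.\ $m$‑a.e.\ point of $M$ lies in $\Lambda^c_+(f)$, so $m=\nu$ and $f$ is ergodic. This is part~(1).

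For part~(2) the point is that the standing hypotheses are $C^1$‑robust. For a partially hyperbolic diffeomorphism $g$ the bundles $E^{cs}_g$ and $E^s_g$ are intrinsically defined and depend continuously on $x\in M$ and on $g$ in the $C^1$ topology (classical for diffeomorphisms), so by Birkhoff and Oseledets the identity~(\ref{center_exponent}) holds and its right‑hand side varies continuously with $g$; hence $\int_M\lambda^c_g\,dm>0$ on a $C^1$‑neighborhood of $f$. Shrinking this neighborhood so that, by robust minimality of $\mathcal{F}^s_f$, the stable foliation $\mathcal{F}^s_g$ stays minimal (and the splitting persists with the same dimensions), part~(1) applies to every $C^2$ volume‑preserving $g$ in it, giving ergodicity; thus $f$ is stably ergodic. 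The only point requiring care — and it is the same as in Theorem~\ref{mainC} — is the Hopf‑type mechanism used twice above, namely that under a minimal, absolutely continuous stable foliation an $s$‑saturated set of positive $m$‑measure is essentially open and dense; this is the standard Brin--Pesin argument and introduces no new difficulty in the invertible case.
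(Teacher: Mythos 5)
Your proposal is correct and follows essentially the same route as the paper: the paper offers no separate argument for this corollary, remarking only that the proof of Theorem \ref{mainC} applies verbatim to diffeomorphisms, and your write-up is exactly that proof rerun in the invertible setting (with the expected simplifications: trivial inverse limit, globally defined bundles and foliations, classical Pesin theory). The two key mechanisms you isolate --- uniqueness of the SRB ergodic component on $\Lambda^c_+(f)$ via $s$-saturation and minimality, and the measure-zero argument for $K=M\setminus U$ --- are the same ones used in the paper's proof of Theorem \ref{mainC}, as is the continuity-of-exponents argument for stability.
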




\end{document}